\numberwithin{equation}{section}
\g@addto@macro\bfseries{\boldmath}
\newcommand{\NN}{\mathbbm{N}}
\newcommand{\RR}{\mathbbm{R}}
\newcommand{\CC}{\mathbbm{C}}
\newcommand{\PP}{\mathbbm{P}}
\newcommand{\Unit}{\mathbbm 1}
\newcommand{\E}{\mathrm{e}}
\newcommand{\I}{\mathrm{i}}
\newcommand{\cc}[1]{\overline{#1}}
\newcommand{\argument}{\,\cdot\,}
\newcommand{\at}[2][]{#1|_{#2}}
\newcommand{\av}{\mathrm{av}}
\DeclareMathOperator{\supp}{supp}
\newcommand{\group}[1]{\mathrm{#1}}
\newcommand{\lie}[1]{\mathfrak{#1}}
\newcommand{\momentmap}{\mathcal{J}}
\newcommand{\racts}{\mathbin{\triangleleft}}
\newcommand{\red}{\mathrm{red}}
\newcommand{\States}{\mathcal{S}}
\newcommand{\algebra}[1]{\mathcal{#1}}
\newcommand{\A}{\algebra{A}}
\newcommand{\Q}{\algebra{Q}}
\newcommand{\Hermitian}{\mathrm{H}}
\newcommand{\Dom}{\mathcal{D}}
\newcommand{\Adbar}{\mathcal{L}^*}
\newcommand{\D}{\mathrm d}
\newcommand{\falling}[3][]{#1(#2#1)_{\downarrow,#3}}
\newcommand{\integral}[3]{\int_{#1} #2 \,#3}
\newcommand{\IsomSign}[1]{\Theta_{#1}}
\newcommand{\invstate}[1][]{\omega_{\av}^{#1}}
\newcommand{\invstateSum}[1][]{\omega_{\hbar,\mu;\av}^{#1}}
\newcommand{\punkt}{\,.}
\newcommand{\komma}{\,,}
\newcommand{\Polynomials}{\mathscr{P}}
\newcommand{\RE}             {\mathsf{Re}}
\newcommand{\IM}             {\mathsf{Im}}
\newcommand{\acts}{\triangleright}
\newcommand{\Vanishing}{\mathcal{V}}
\newcommand{\R}{\mathcal{R}}
\newcommand{\PolyHol}{\Polynomials_{\!\holomorphic,\hbar}}
\DeclarePairedDelimiter{\ordinarySet}{\{}{\}}
\DeclarePairedDelimiter{\ordinaryIP}{\langle}{\rangle}
\newcommand{\set}[3][]{\ordinarySet[#1]{\,#2 \;#1|\; #3\,}}
\newcommand{\poi}[3][]{\ordinarySet[#1]{\,#2\mathbin{,}#3\,}}
\newcommand{\dupr}[3][]{\ordinaryIP[#1]{\,#2 \,,\, #3\,}}
\newcommand{\skal}[3][]{\ordinaryIP[#1]{\,#2 \,#1|\, #3\,}}
\newcommand{\skalhbar}[3][]{\ordinaryIP[#1]{\,#2 \,#1|\, #3\,}_\hbar}
\newcommand{\skalhbarIsOne}[3][]{\ordinaryIP[#1]{\,#2 \,#1|\, #3\,}_1}
\newcommand{\genSId}[2][]{#1\langle\!#1\langle\,#2\,#1\rangle\!#1\rangle_{\ast\mathrm{id}}}
\newcommand{\abs}[2][]{#1|#2#1|}
\newcommand{\mred}{\textup{-}\mathrm{red}}
\newcommand{\holomorphic}{\mathcal{O}}
\newcommand{\Levelset}{\mathcal{Z}}
\newtheorem{lemma}{Lemma}[section]
\newtheorem{proposition}[lemma]{Proposition}
\newtheorem{theorem}[lemma]{Theorem}
\newtheorem{corollary}[lemma]{Corollary}
\newtheorem{definition}[lemma]{Definition}
\newtheorem{remark}[lemma]{Remark}
\def\thmhead@plain#1#2#3{%
	\thmname{#1}\thmnumber{\@ifnotempty{#1}{ }\@upn{#2}}%
	\thmnote{ {\the\thm@notefont#3}}}
\let\thmhead\thmhead@plain
\theoremstyle{nonumberplain}
\newtheorem{proof}{Proof}
\theoremstyle{empty}
\author{
  \\
  \textbf{Philipp Schmitt}%
  	\thanks{Supported partly by the Danish National Research Foundation through
  	the Centre of Symmetry and Deformation (DNRF92),
  	\href{mailto:schmitt@math.uni-hannover.de}{\texttt{schmitt@math.uni-hannover.de}}
  }
  \\
  Institut für Analysis, 
  Leibniz Universität Hannover
  \\[0.5cm]
  \textbf{Matthias Schötz}%
  \thanks{Boursier de l'ULB,
    supported by the Fonds de la Recherche Scientifique (FNRS) and the
    Fonds Wetenschappelijk Onderzoek - Vlaaderen (FWO) under EOS Project n$^\circ$30950721.
    Current address: Mathematisches Institut, Universit\"{a}t Leipzig,
    \href{mailto:Matthias.Schoetz@math.uni-leipzig.de}{\texttt{Matthias.Schoetz@math.uni-leipzig.de}}
  }
  \\
  Département de Mathématiques, 
  Université libre de Bruxelles
  \\[0.5cm]
}
\title{Symmetry Reduction of States II:\\A non-commutative Positivstellensatz for $\CC\PP^n$}
\date{January 2022}
\begin{document}
	\maketitle
	\vspace*{-.5cm}
	\begin{abstract}
		We give a non-commutative Positivstellensatz for $\CC\PP^n$:
		The (commutative) $^*$\=/algebra of polynomials on the real algebraic set $\CC\PP^n$ with the pointwise product can be realized
		by phase space reduction as the $\group{U}(1)$-invariant polynomials on $\CC^{1+n}$,
		restricted to the real $(2n+1)$-sphere inside $\CC^{1+n}$, and Schmüdgen's Positivstellensatz
		gives an algebraic description of the real-valued $\group{U}(1)$-invariant polynomials on $\CC^{1+n}$ that are
		strictly pointwise positive on the sphere. In analogy to this commutative case, we consider
		a non-commutative $^*$\=/algebra of polynomials on $\CC^{1+n}$, the Weyl algebra,
		and give an algebraic description of the real-valued $\group{U}(1)$-invariant polynomials
		that are positive in certain $^*$\=/representations on Hilbert spaces of holomorphic sections
		of line bundles over $\CC\PP^n$. It is especially noteworthy that the non-commutative result
		applies not only to strictly positive, but to all positive (semidefinite) elements.
		As an application, all $^*$\=/representations of the quantization of the 
		polynomial $^*$\=/algebra on $\CC\PP^n$, obtained e.g.\ through phase space reduction or Berezin--Toeplitz quantization,
		are determined.
	\end{abstract}
	\tableofcontents


\begin{onehalfspace}

\section{Introduction}
The symplectic manifolds $\CC\PP^n \cong \Levelset_\mu / \group{U}(1)$ with their usual Fubini--Study form (up to $\mu$-dependent rescalation)
arise naturally by Marsden--Weinstein reduction \cite{marsden.weinstein:reductionOfSymplecticManifoldsWithSymmetry}
from $\CC^{1+n}$ with its standard symplectic structure: One takes a $\mu$-levelset
$\Levelset_\mu \coloneqq \set{w\in \CC^{1+n}}{\momentmap(w) = \mu}$, $\mu\in {]0,\infty[}$, of the polynomial ``momentum map'' $\momentmap \coloneqq z_0 \cc{z}_0 + \dots + z_n \cc{z}_n$,
and divides out the $\group{U}(1)$-action by multiplication, which is the one generated by $\momentmap$ with respect to the standard symplectic structure of $\CC^{1+n}$.

Dual to this geometric approach, the algebra of polynomial functions on $\CC\PP^n$ (seen as a real algebraic set) can be obtained as the quotient of the
$\group{U}(1)$-invariant polynomials on $\CC^{1+n}$ modulo the ideal generated by $\momentmap - \mu \Unit$. This point of view has
the advantage that it allows a generalization to non-commutative deformations of the pointwise product. In the commutative case, Schmüdgen's Positivstellensatz \cite{schmuedgen:KMomentProblemForCompactSemiAlgebraicSets}
applies to the compact real algebraic set $\CC\PP^n$ and states that every polynomial on $\CC\PP^n$ that is pointwise
strictly positive can be expressed as a sum of squares of polynomials on $\CC\PP^n$. A slight reformulation of this result might fit somewhat
better to the setting of phase space reduction:

\begin{theorem*}[Commutative strict Positivstellensatz for $\CC\PP^n$]
  Let $f$ be a $\group{U}(1)$-invariant,\linebreak real-valued polynomial function on $\CC^{1+n}$ and $\mu\in {]0,\infty[}$.
  If $f(w)>0$ holds for all $w\in \Levelset_\mu$, then $f$ can be expressed as a sum of (Hermitian) squares of $\group{U}(1)$-invariant
  polynomials on $\CC^{1+n}$ plus an element from the ideal generated by $\momentmap - \mu \Unit$.
\end{theorem*}
Results of this type are well-known in commutative real algebraic geometry in many different settings. Most famously, Artin's solution
of Hilbert's 17th problem and the Positivstellensatz of Krivine and Stengle give an algebraic characterization of pointwise positive polynomials.
Similar theorems for non-commutative cases are less well-understood, but have been developed in a variety of different contexts:
\cite{helton:positiveNoncommutativePolynomialsAreSumsOfSquares, helton.mcCullough.putinar:nonCommutativePositivstellensatzOnIsometries, zalar:operatorPostivistellensaetzeForNoncommutativePolynomials}
discuss non-commutative polynomials, \cite{hillar.nie:elementaryAndConstructiveSolutionToHilberts17thProblemForMatrices, procesi.schacher:nonCommutativeRealNullstellensatzAndHilberts17thProblem, cimpric:realAlgebraicGeometryForMatricesOverCommutativeRings,
klep.schwaighfer:pureStatesPositiveMatrixPolynomialsAndSumsOfHermitianSquares} matrices over polynomials,
and \cite{schmuedgen:StrictPositivstellensatzForWeylAlgebra, schmuedgen:StrictPositivstellensatzForEnvelopingAlgebras, schmuedgen:algebrasOfFractionsAndStrictPositivstellensaetze}
non-commutative complex $^*$\=/algebras. See also~\cite{schmuedgen:nonCommutativeRealAlgebraicGeometry} for an overview and some more references.

The main result of the present article is a generalization of the commutative Positivstellensatz above to the deformation of $\CC\PP^n$
that is given by its Wick star product: Instead of the polynomial $^*$\=/algebra on $\CC^{1+n}$ with the pointwise product,
consider the $^*$\=/algebra of polynomials on $\CC^{1+n}$ with the Wick star product $\star_\hbar$, which is isomorphic to the Weyl algebra
of canonical commutation relations. By restricting to $\group{U}(1)$-invariant polynomials and dividing out the ideal
generated by $\momentmap-\mu\Unit$ (with respect to $\star_\hbar$), one obtains the polynomial functions on $\CC\PP^n$
with the standard Wick star product of $\CC\PP^n$ as in \cite{bordemann.brischle.emmrich.waldmann:PhaseSpaceReductionForStarProducts.ExplicitConstruction, bordemann.brischle.emmrich.waldmann:SubalgebrasWithConvergingStarProducts}. This works especially for almost all $\hbar \in {]0,\infty[}$.
The natural order on these $^*$\=/algebras associated
to $\CC^{1+n}$ and $\CC\PP^n$ is the operator order obtained by representing their elements as operators on the Fock space
or on the $\mu$-eigenspace of $\momentmap$ therein, respectively. A Positivstellensatz for the former was given
in \cite{schmuedgen:StrictPositivstellensatzForWeylAlgebra}. In the following, a similar result for the latter will be proven:

\begin{theorem*}[Non-commutative non-strict Positivstellensatz for $\CC\PP^n$]
  Let $f$ be a $\group{U}(1)$-in-\linebreak variant, real-valued polynomial function on $\CC^{1+n}$ and $\mu\in {[0,\infty[}$.
  If $\skal{\psi}{\pi_\hbar(f)(\psi)} \ge 0$ holds for all $\mu$-eigenvectors $\psi$ of $\pi_\hbar(\momentmap)$,
  where $\pi_\hbar$ denotes the representation on the Fock space, then $f$ can be expressed as a sum of Hermitian squares
  (with respect to $\star_\hbar$) of $\group{U}(1)$-invariant polynomials on $\CC^{1+n}$ plus an element
  from the ideal generated by $\momentmap - \mu \Unit$ (with respect to $\star_\hbar$).
\end{theorem*}
It is especially noteworthy that this non-commutative result appears to be stronger than expected from the analogous commutative one,
because it yields a representation as sums of Hermitian squares not only for strictly positive elements, but for all positive ones. 

Note
that $\pi_\hbar(\momentmap)$ has a discrete set of eigenvalues $\set{\hbar k}{k\in \NN_0}$, $\hbar \in {]0,\infty[}$. For all 
$\mu \in {[0,\infty[} \setminus \set{\hbar k}{k\in \NN_0}$, the above non-strict Positivstellensatz is equivalent to giving a
representation of $-\lambda \Unit$ for one arbitrary $\lambda \in {]0,\infty[}$ as a sum of Hermitian squares plus an element from the ideal generated by $\momentmap - \mu \Unit$.
In this case, a hypothetical strict Positivstellensatz, giving an algebraic certificate of positivity for $-\lambda \Unit + \epsilon \Unit$
for all $\epsilon \in {]0,\infty[}$, would trivially also give rise to a non-strict one. For $\mu  \in \set{\hbar k}{k\in \NN_0}$, however,
the appearance of a simple non-strict Positivstellensatz might be more surprising.

This article is organized as follows: After recapitulating the necessary preliminaries on ordered $^*$\=/algebras and quadratic modules in Section~\ref{sec:preliminaries},
Section~\ref{sec:reductionGeneral} is devoted to the application of the general reduction procedure for ``representable Poisson $^*$\=/algebras''
from \cite{schmitt.schoetz:preprintSymmetryReductionOfStatesI} to the case of (non-commutative) $^*$\=/algebras equipped with a Poisson bracket
coming from the commutator and equipped with an order obtained from a $^*$-representation on a pre-Hilbert space. A special case of this is the
reduction of the Wick star product from $\CC^{1+n}$ to $\CC\PP^n$ that is covered in Section~\ref{sec:reductionWick}. The proof of the main
Theorem~\ref{theorem:main} is given in Section~\ref{sec:proof}. Finally, in Section~\ref{sec:application}, this result is applied in order to determine
the $^*$-representations of the Wick star product on $\CC\PP^n$ for both strictly positive and strictly negative values of $\hbar$.

\section{Preliminaries} \label{sec:preliminaries}
The set of natural numbers is denoted by $\NN \coloneqq \{1,2,3,\dots\}$ and $\NN_0 \coloneqq \{0\} \cup \NN$. The fields of real and complex numbers are $\RR$ and $\CC$, respectively.

A \emph{$^*$-algebra} $\A$ is a unital associative algebra over $\CC$, equipped with an antilinear involution
$\argument^* \colon \A \to \A$ that fulfils $(ab)^* = b^* a^*$ for all $a,b\in \A$. The set $\A_\Hermitian \coloneqq \set{a\in \A}{a=a^*}$
of \emph{Hermitian} elements of $\A$ is a real linear subspace of $\A$. The unit of $\A$ will be denoted by $\Unit$, and always fulfils $\Unit^* = \Unit$.
A \emph{$^*$\=/ideal} of a $^*$\=/algebra $\A$ is a linear subspace $\mathcal{I}$ of $\A$ which is stable under $\argument^*$ and fulfils
$ab \in \mathcal{I}$ for all $a\in \A$, $b\in \mathcal{I}$ (hence also $b a = (a^*b^*)^* \in \mathcal{I}$).
A \emph{quadratic module} of a $^*$\=/algebra $\A$ is a subset $\Q$ of $\A_\Hermitian$ that fulfils
\begin{equation}
  q+r \in \Q
  \,,\quad\quad
  a^*q\,a \in \Q\,,
  \quad\quad\text{and}\quad\quad
  \Unit \in \Q
\end{equation}
for all $q,r\in \Q$ and all $a\in \A$. See e.g.~\cite{schmuedgen:invitationToStarAlgebras} for more details about $^*$\=/algebras
and their quadratic modules. For a quadratic module $\Q$ of $\A$ one defines the \emph{support} $\supp \Q \coloneqq \Q \cap (-\Q)$,
which is a real linear subspace of $\A_\Hermitian$ stable under conjugations $q \mapsto a^*q\,a$ with arbitrary $a\in \A$,
and the \emph{support $^*$\=/ideal} $\supp_\CC \Q \coloneqq \supp \Q + \I\, \supp \Q$,
which is a $^*$\=/ideal of $\A$. Especially if $-\Unit \in \Q$, then $\Unit \in \supp \Q \subseteq \supp_\CC \Q$ so that
$\supp_\CC \Q = \A$ and $\Q = \supp \Q = \A_\Hermitian$. Moreover, if $\Q$ and $\mathcal{I}$ are a quadratic module and a $^*$\=/ideal of $\A$,
respectively, then $\Q + \mathcal{I}_\Hermitian$ is again a quadratic module of $\A$, where $\mathcal{I}_\Hermitian \coloneqq \mathcal{I} \cap \A_\Hermitian$.
On any $^*$\=/algebra $\A$, the smallest (with respect to inclusion)
quadratic module is
\begin{equation}
  \A^{++}_\Hermitian \coloneqq \set[\Big]{\sum\nolimits_{j=1}^m a_j^* a_j}{m\in \NN; a_1,\dots,a_m \in \A}
  \komma
\end{equation}
the quadratic module of \emph{algebraically positive} Hermitian elements of $\A$, or of \emph{sums of Hermitian squares}.
If $\A$ is commutative, then sums of Hermitian squares are sums of squares because $a^*a = \RE(a)^2 + \IM(a)^2$ for all $a\in \A$.

An \emph{ordered $^*$\=/algebra} is a $^*$\=/algebra equipped with a quadratic module $\A^+_\Hermitian$ with $\supp \A^+_\Hermitian = \{0\}$.
The elements of $\A^+_\Hermitian$ will then be referred to as the \emph{positive} Hermitian elements of $\A$, and one can define a
partial order $\le$ on $\A_\Hermitian$ as $a \le b$ if and only if $b-a \in \A_\Hermitian^+$, where $a,b\in \A_\Hermitian$.
Ordered $^*$\=/algebras can be seen as generalizations of $C^*$\=/algebras that may contain unbounded elements. For example,
the basic constructions of square roots, etc., and the continuous calculus on $C^*$\=/algebras can be generalized to certain ordered $^*$\=/algebras,
see \cite{schoetz:equivalenceOrderAlgebraicStructure, schoetz:PreprintUniversalContinuousCalculusForSuStarAlgebras}.
When comparing different orderings on one $^*$\=/algebra $\A$, however, the notion of different quadratic modules on $\A$ is usually more convenient.

For example, any $^*$\=/algebra $\A$ of complex-valued functions with the usual pointwise operations and the pointwise order on the Hermitian elements
(i.e. the real-valued functions in $\A$) is an ordered $^*$\=/algebra, whose quadratic module of positive Hermitian elements $\A^+_\Hermitian$ consists
of the pointwise positive real-valued functions in $\A$. Similarly, for a pre-Hilbert space $\Dom$ with inner product
$\skal{\argument}{\argument} \colon \Dom \times \Dom \to \CC$, antilinear in the first and linear in the second argument, define $\Adbar(\Dom)$
as the ordered $^*$\=/algebra of adjointable endomorphisms of $\Dom$ with the operator order; in detail:
A linear endomorphism $a \colon \Dom \to \Dom$ is said to be \emph{adjointable} if there exists a (necessarily unique and linear) \emph{adjoint}
$a^* \colon \Dom \to \Dom$ such that $\skal{a^*(\phi)}{\psi} = \skal{\phi}{a(\psi)}$ holds for all $\phi, \psi \in \Dom$.
The set of adjointable endomorphisms of $\Dom$ is a unital subalgebra of all its linear endomorphisms and becomes a $^*$\=/algebra
when equipped with the $^*$\=/involution given by the mapping to the adjoint. It becomes an ordered $^*$\=/algebra by setting
\begin{equation}
  \Adbar(\Dom)^+_\Hermitian \coloneqq \set[\big]{a\in \Adbar(\Dom)_\Hermitian}{\skal{\psi}{a(\psi)} \ge 0 \textup{ for all $\psi\in\Dom$}}
  \punkt
\end{equation}
This yields a method for constructing quadratic modules on any $^*$\=/algebra $\A$:
Let $\pi \colon \A \to \Adbar(\Dom)$ be a $^*$\=/representation on a pre-Hilbert space $\Dom$, i.e.~a linear and multiplicative map that maps the unit of $\A$ to the unit
of $\Adbar(\Dom)$ and fulfils $\pi(a^*) = \pi(a)^*$ for all $a\in \A$, then the preimage $\Q \coloneqq \pi^{-1}\big(\Adbar(\Dom)^+_\Hermitian\big) \cap \A_\Hermitian$
is a quadratic module of $\A$ and $\supp_\CC \Q = \ker \pi$. If $\pi$ additionally is injective, then
$\Q = \pi^{-1}\big(\Adbar(\Dom)^+_\Hermitian\big)$ and $\supp_\CC \Q = \{0\}$.
The aim of this article is to give an algebraic description of some quadratic modules that are induced by $^*$\=/representations in this way.

\section{Reduction of Ordered \texorpdfstring{$^*$\=/Algebras}{*-Algebras}} \label{sec:reductionGeneral}

In \cite{schmitt.schoetz:preprintSymmetryReductionOfStatesI}, a general reduction scheme was 
developed for arbitrary ``representable Poisson $^*$\=/algebras'', which especially generalizes
Marsden--Weinstein reduction of the ordered $^*$\=/algebra of smooth functions on a symplectic manifold by the
action of a commutative Lie group. In the following, we are more interested in the case of $^*$\=/algebras represented
on a pre-Hilbert space:

A \emph{state} on an ordered $^*$\=/algebra $\A$ is a linear functional $\omega \colon \A \to \CC$ that fulfils $\dupr{\omega}{\Unit} = 1$,
$\dupr{\omega}{a} \in \RR$ for all $a\in \A_\Hermitian$, and $\dupr{\omega}{a} \in {[0,\infty[}$ for all $a\in \A^+_\Hermitian$.
We say that the order on $\A$ is \emph{induced by its states} if for all $a\in \A_\Hermitian \setminus \A^+_\Hermitian$ there exists
a state $\omega$ on $\A$ such that $\dupr{\omega}{a} < 0$. Using GNS-representations one finds that this condition is equivalent
to the existence of a pre-Hilbert space $\Dom$ and an injective $^*$-representation $\pi$ of $\A$ such that $\A^+_\Hermitian = \pi^{-1}\big(\Adbar(\Dom)^+_\Hermitian\big)$.

An \emph{eigenstate} on $\A$ of some element $a\in \A$ with \emph{eigenvalue} $\lambda \in \CC$ is a state $\omega$ on $\A$ that fulfils
one (hence all) of the following equivalent conditions, \cite[Sec.~2.3]{schmitt.schoetz:preprintSymmetryReductionOfStatesI}:
\begin{enumerate}
  \item $\dupr{\omega}{(a-\lambda\Unit)^*(a-\lambda \Unit)} = 0$.
  \item $\dupr{\omega}{a^*b} = \cc{\lambda}\dupr{\omega}{b}$ for all $b\in \A$.
  \item $\dupr{\omega}{ba} = \lambda \dupr{\omega}{b}$ for all $b\in \A$.
\end{enumerate}
The set of all eigenstates on $\A$ of $a$ with eigenvalue $\lambda$ will be denoted by $\States_{a,\lambda}(\A)$.

Now assume that $\A$ is an ordered $^*$\=/algebra whose order is induced by its states and which is equipped with the canonical
Poisson bracket given by the rescaled commutator
\begin{equation}
\poi{a}{b} \coloneqq \frac{ab-ba}{\I\hbar}
\label{eq:poissonstd}
\end{equation}
for some $\hbar \in \RR \setminus \{0\}$.
In this case, i.e.~if the Poisson bracket is of the form \eqref{eq:poissonstd}, there is no need to explicitly discuss Poisson brackets any further.
For the purpose of this article, it will also be sufficient to only consider
the case of a reduction with respect to a $1$-dimensional Lie algebra $\lie u_1 \cong \RR$, so that any (linear) ``momentum map''
$\momentmap \colon \lie u_1 \to \A_\Hermitian$ is fully determined by one ``momentum operator'' $\momentmap(1)$,
and any ``momentum'' $\mu \in \lie u_1^*$ by $\mu(1)$. By abuse of notation, we will simply write 
$\momentmap \coloneqq \momentmap(1)\in \A_\Hermitian$ and $\mu \coloneqq \mu(1) \in \RR$ as in the 
examples discussed
in \cite[Sec.~5-6]{schmitt.schoetz:preprintSymmetryReductionOfStatesI}.

In this special case, the reduction scheme of \cite[Sec.~3]{schmitt.schoetz:preprintSymmetryReductionOfStatesI} reduces to the following:
Let $\momentmap \in \A_\Hermitian$ and $\mu \in \RR$ be given. Denote by
\begin{equation}
  \A^{\lie u_1} \coloneqq \set{a\in \A}{a \momentmap = \momentmap a}
\end{equation}
the space of ``invariant'' elements, which is a unital $^*$\=/subalgebra of $\A$, i.e.~a subalgebra containing $\Unit$ and stable under $\argument^*$,
and which becomes an ordered $^*$\=/algebra by defining its quadratic module of positive Hermitian elements to be 
$(\A^{\lie u_1})^+_\Hermitian \coloneqq \A^+_\Hermitian \cap \A^{\lie u_1}$.
Moreover, denote by
\begin{align}
  \R_\mu &\coloneqq \set[\big]{a \in (\A^{\lie u_1})_\Hermitian}{ \dupr{\omega}{a} \ge 0 \textup{ for all 
  }\omega \in \States_{\momentmap,\mu}(\A^{\lie u_1}) }
  \shortintertext{and}
  \Vanishing_\mu &\coloneqq \set[\big]{a \in \A^{\lie u_1}}{ \dupr{\omega}{a} = 0 \textup{ for all 
  }\omega \in \States_{\momentmap,\mu}(\A^{\lie u_1}) } = \supp_\CC \R_\mu
\end{align}
the \emph{quadratic module $\R_\mu$ of the $\momentmap$-reduction of $\A$ at $\mu$}
and the \emph{vanishing ideal} $\Vanishing_\mu$ of the eigenstates of $\momentmap$ with eigenvalue $\mu$.
Note that $\R_\mu$ and $\Vanishing_\mu$ are a quadratic module and a $^*$\=/ideal, respectively, of $\A^{\lie u_1}$,
but not of $\A$ in general, and that $(\A^{\lie u_1})^+_\Hermitian \subseteq \R_\mu$ and $\momentmap-\mu \Unit \in \Vanishing_\mu$ hold.
In particular, $\genSId{\momentmap-\mu}$, the $^*$\=/ideal of $\A^{\lie u_1}$ that is generated by $\momentmap-\mu \Unit$, is contained in $\Vanishing_\mu$.

The \emph{$\momentmap$-reduction of $\A$ at $\mu$} is then given by the tuple
of an ordered $^*$\=/algebra $\A_{\mu\mred}$ and a map $[\argument]_\mu \colon \A^{\lie u_1} \to \A_{\mu\mred}$
that can be constructed as follows:
\begin{itemize}
  \item The $^*$\=/algebra underlying $\A_{\mu\mred}$ is the quotient $^*$\=/algebra $\A^{\lie u_1} / \Vanishing_\mu$.
  \item The map $[\argument]_\mu$ is the canonical projection onto the quotient.
  \item The quadratic module of positive Hermitian elements of $\A_{\mu\mred}$ is $(\A_{\mu\mred})^+_\Hermitian \coloneqq \set{[r]_\mu}{r\in\R_\mu}$.
\end{itemize}
This construction of $\A_{\mu\mred}$ and $[\argument]_\mu \colon \A^{\lie u_1} \to \A_{\mu\mred}$ is an application of 
\cite[Sec.~3.3]{schmitt.schoetz:preprintSymmetryReductionOfStatesI}; for the sake of the present
article we can equally well view this as a definition. 
The order on $\A_{\mu\mred}$ is again induced by its states, because every $\omega \in \States_{\momentmap,\mu}(\A^{\lie u_1})$
descends to a state on $\A_{\mu\mred}$ and because the Hermitian elements in the preimage of $(\A_{\mu\mred})^+_\Hermitian$ under $[\argument]_\mu$ are $\R_\mu$.
\begin{proposition} \label{proposition:Rmucharacterization}
  Let $\A$ be an ordered $^*$\=/algebra, $\momentmap \in \A_\Hermitian$, $\Dom$ a pre-Hilbert space and $\pi \colon \A \to \Adbar(\Dom)$ an injective $^*$\=/representation
  of the $^*$\=/algebra underlying $\A$ such that $\A^+_\Hermitian = \pi^{-1}\big( \Adbar(\Dom)^+_\Hermitian\big)$.
  For any $\mu \in \RR$ write
  \begin{equation}
    \mathcal{E}_\mu \coloneqq \set[\big]{\psi \in \Dom}{\pi(\momentmap)(\psi) = \mu \psi}
    \label{eq:mueigenspace}
  \end{equation}
  for the $\mu$-eigenspace of $\pi(\momentmap)$, and 
  $\mathcal{E}_\mu^\bot \coloneqq \set{\phi\in\Dom}{\skal{\psi}{\phi} = 0 \textup{ for all }\psi \in \mathcal{E}_\mu}$
  for its orthogonal complement in $\Dom$. 
  Let $\mu \in \RR$ be given and assume that $\Dom = \mathcal{E}_\mu \oplus \mathcal{E}_\mu^\bot$ as vector spaces
  and that there exists $\epsilon \in {]0,\infty[}$ such that $\skal[\big]{\phi}{\pi\big((\momentmap-\mu\Unit)^2\big)(\phi)} \ge \epsilon \skal{\phi}{\phi}$
  holds for all $\phi \in \mathcal{E}_\mu^\bot$. Then 
  \begin{equation}
    (\A^{\lie u_1})^+_\Hermitian + \big( \genSId{\momentmap-\mu} \big)_\Hermitian
    =
    \R_{\mu}
    =
    \set[\big]{a\in (\A^{\lie u_1})_\Hermitian}{\skal{\psi}{\pi(a)(\psi)}\ge 0 \textup{ for all }\psi 
    \in \mathcal{E}_\mu }
    \komma
    \label{eq:Rmucharacterization}
  \end{equation}
  and the $^*$\=/algebra underlying $\A_{\mu\mred}$ admits an injective $^*$\=/representation $\pi_{\mu\mred} \colon \A_{\mu\mred} \to \Adbar(\mathcal{E}_\mu)$
  such that $(\A_{\mu\mred})^+_\Hermitian = \pi_{\mu\mred}^{-1}\big( \Adbar(\mathcal{E}_\mu)^+_\Hermitian\big)$.
  This $^*$\=/representation is given by $\pi_{\mu\mred}([a]_\mu)(\psi) \coloneqq \pi(a)(\psi) \in \mathcal{E}_\mu$ for all $\psi \in \mathcal{E}_\mu$ and all $a\in \A^{\lie u_1}$.
\end{proposition}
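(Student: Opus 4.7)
The plan is to establish the triple equality by a short cycle of inclusions, and then to deduce the assertion about $\pi_{\mu\mred}$ from it. The two easy directions come first. For $(\A^{\lie u_1})^+_\Hermitian + (\genSId{\momentmap-\mu})_\Hermitian \subseteq \R_\mu$, positive Hermitian elements are nonnegative under every state, while every element of $\genSId{\momentmap-\mu}$ is annihilated by each $\omega\in\States_{\momentmap,\mu}(\A^{\lie u_1})$ (since $\momentmap-\mu\Unit$ is central in $\A^{\lie u_1}$ and $\dupr{\omega}{\momentmap b} = \mu\dupr{\omega}{b}$ for all $b$), so it lies in $\supp_\CC\R_\mu$, and adjoining elements of the support leaves a quadratic module unchanged. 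For $\R_\mu \subseteq \set{a\in(\A^{\lie u_1})_\Hermitian}{\skal{\psi}{\pi(a)(\psi)} \ge 0 \text{ for all } \psi\in \mathcal{E}_\mu}$, any $\psi\in \mathcal{E}_\mu$ defines a vector state on $\A^{\lie u_1}$ that is an $\momentmap$-eigenstate with eigenvalue $\mu$, because $\pi(b)$ commutes with $\pi(\momentmap)$ (and in particular preserves $\mathcal{E}_\mu$) for every $b\in \A^{\lie u_1}$.

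The heart of the argument, and the main obstacle, is the remaining inclusion; I plan to resolve it by an explicit algebraic construction. Given $a\in(\A^{\lie u_1})_\Hermitian$ with $\skal{\psi}{\pi(a)(\psi)}\ge 0$ for all $\psi\in\mathcal{E}_\mu$, set $K\coloneqq 1/(4\epsilon^2)$ and
\begin{equation*}
  p \coloneqq a + (\momentmap-\mu\Unit)^2 (a^2 + K\Unit)\komma\qquad c \coloneqq -(\momentmap-\mu\Unit)(a^2+K\Unit)\punkt
\end{equation*}
Centrality of $\momentmap-\mu\Unit$ in $\A^{\lie u_1}$ immediately gives $c\in(\A^{\lie u_1})_\Hermitian$, $(\momentmap-\mu\Unit)c\in(\genSId{\momentmap-\mu})_\Hermitian$ and $a = p + (\momentmap-\mu\Unit)c$, so it suffices to verify $p\in(\A^{\lie u_1})^+_\Hermitian$, i.e.\ $\skal{\phi}{\pi(p)(\phi)}\ge 0$ for all $\phi\in\Dom$. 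Since $p\in\A^{\lie u_1}$, $\pi(p)$ preserves both $\mathcal{E}_\mu$ and $\mathcal{E}_\mu^\bot$, so for $\phi=\phi_\mu+\phi'$ in the orthogonal decomposition the cross terms of $\skal{\phi}{\pi(p)(\phi)}$ vanish. On the $\mathcal{E}_\mu$-summand the additional piece is zero and the claim reduces to the hypothesis on $a$. On $\mathcal{E}_\mu^\bot$, Cauchy--Schwarz together with the bound $\|\psi\|\le\epsilon^{-1/2}\|\pi(\momentmap-\mu\Unit)(\psi)\|$ (which holds for $\psi\in\mathcal{E}_\mu^\bot$ by hypothesis, applied once to $\phi'$ itself and once to $\pi(a)(\phi')\in\mathcal{E}_\mu^\bot$) yields
\begin{equation*}
  \abs[\big]{\skal{\phi'}{\pi(a)(\phi')}} \le \|\phi'\|\,\|\pi(a)(\phi')\| \le \epsilon^{-1}\|\pi(\momentmap-\mu\Unit)(\phi')\|\,\|\pi((\momentmap-\mu\Unit)a)(\phi')\|\punkt
\end{equation*}
Writing $x$ and $y$ for these two norms and using the identity $(\momentmap-\mu\Unit)^2 a^2 = ((\momentmap-\mu\Unit)a)^2$ (which uses centrality, and the fact that $(\momentmap-\mu\Unit)a$ is Hermitian), one finds $\skal{\phi'}{\pi(p)(\phi')} \ge y^2 - xy/\epsilon + Kx^2 = (y - x/(2\epsilon))^2 \ge 0$, which is precisely where the constant $K = 1/(4\epsilon^2)$ is pinned down.

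The second half of the proposition is then a formal consequence. Taking the $^*$\=/support of the just-established first equality and using polarisation of self-adjoint operators to upgrade the vanishing of the quadratic form on $\mathcal{E}_\mu$ to the vanishing of the operator gives $\Vanishing_\mu = \supp_\CC\R_\mu = \set[\big]{b\in\A^{\lie u_1}}{\pi(b)(\psi)=0 \text{ for all }\psi\in\mathcal{E}_\mu}$. Since $\pi(a)$ preserves $\mathcal{E}_\mu$ for every $a\in\A^{\lie u_1}$, the formula $\pi_{\mu\mred}([a]_\mu)(\psi)\coloneqq\pi(a)(\psi)$ descends to a well-defined and injective $^*$\=/representation of $\A_{\mu\mred} = \A^{\lie u_1}/\Vanishing_\mu$ on $\mathcal{E}_\mu$ (both properties precisely because of the above characterisation of $\Vanishing_\mu$). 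Finally, $[a]_\mu\in(\A_{\mu\mred})^+_\Hermitian$ holds iff some representative lies in $\R_\mu$, iff $\pi(a)|_{\mathcal{E}_\mu}\ge 0$ by the first part, iff $\pi_{\mu\mred}([a]_\mu)\in\Adbar(\mathcal{E}_\mu)^+_\Hermitian$.
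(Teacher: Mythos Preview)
Your proof is correct and follows the same overall architecture as the paper's: the two easy inclusions, then an explicit algebraic correction term of the form $(\momentmap-\mu\Unit)^2 q$ added to $a$ to force positivity on all of $\Dom$, followed by a routine deduction of the properties of $\pi_{\mu\mred}$. The only substantive difference is the choice of correction: the paper takes $\hat a \coloneqq a + (2\epsilon)^{-1}(\momentmap-\mu\Unit)^2(a-\Unit)^2$ and uses the gap condition once, reducing the estimate on $\mathcal{E}_\mu^\bot$ to the elementary identity $a + (a-\Unit)^2/2 = (a^2+\Unit)/2 \ge 0$; you take $p \coloneqq a + (\momentmap-\mu\Unit)^2(a^2 + (4\epsilon^2)^{-1}\Unit)$, apply the gap condition twice via Cauchy--Schwarz, and complete the square. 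Both are valid; the paper's version is marginally cleaner (fewer invocations of the hypothesis, a simpler constant), while yours has the minor advantage that the correction $a^2 + K\Unit$ is visibly positive in $\A$ already.
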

\begin{proof}
	The inclusion $(\A^{\lie u_1})^+_\Hermitian + ( \genSId{\momentmap-\mu} )_\Hermitian \subseteq \R_{\mu}$
	follows immediately from the properties of eigenstates of $\momentmap$ with eigenvalue $\mu$ and the fact that $\momentmap$ is central in $\A^{\lie u_1}$.
	For every $\psi \in \mathcal{E}_\mu$ with $\skal{\psi}{\psi} = 1$, the map $\chi_\psi \colon \A^{\lie u_1} \to \CC$,
	$a \mapsto \dupr{\chi_\psi}{a} \coloneqq \skal{\psi}{\pi(a)(\psi)}$
	is a state on $\A^{\lie u_1}$ and even is an eigenstate of $\momentmap$ with eigenvalue $\mu$.
	So 
	$\R_{\mu} \subseteq \set{a\in (\A^{\lie u_1})_\Hermitian}{\skal{\psi}{\pi(a)(\psi)}\ge 0 \textup{ for all }\psi \in \mathcal{E}_\mu }$.
	
	In order to prove \eqref{eq:Rmucharacterization} it remains to show that the right-hand side is contained 
	in the left-hand side of this equation.
	Note that $\pi(a)(\mathcal{E}_\mu) \subseteq \mathcal E_\mu$ for all $a\in \A^{\lie u_1}$,
	because $\pi(\momentmap)(\pi(a)(\psi)) = \pi(a)(\pi(\momentmap)(\psi)) = \mu \,\pi(a)(\psi)$ holds for all $\psi \in \mathcal E_\mu$,
	and also $\pi(a)(\mathcal{E}^\bot_\mu) \subseteq \mathcal E_\mu^\bot$, 
	because $\skal{\psi}{\pi(a)(\phi)} = \skal{\pi(a^*) (\psi)}{\phi} = 0$ holds for all $\psi \in \mathcal E_\mu$ and $\phi \in \mathcal E_\mu^\bot$.
	Now assume that $a\in (\A^{\lie u_1})_\Hermitian$ fulfils $\skal{\psi}{\pi(a)(\psi)}\ge 0$ for all $\psi \in \mathcal{E}_\mu$.
	Then $\hat{a} \coloneqq a + (2\epsilon)^{-1}(\momentmap-\mu\Unit)^2(a-\Unit)^2$ is Hermitian and an element of $\A^{\lie u_1}$
	because $\momentmap a = a \momentmap$,
	and even $\hat{a} \in (\A^{\lie u_1})^+_\Hermitian$ holds: Indeed, by assumption,
	any $\phi \in \Dom$ can be decomposed as $\phi = \phi_\mu + \phi_\bot$ with $\phi_\mu \in 
	\mathcal{E}_\mu$ and $\phi_\bot \in \mathcal{E}_\mu^\bot$, and thus
	\begin{align*}
		\skal{\phi}{\pi(\hat{a})(\phi)} 
		&=
		\skal{\phi_\mu}{\pi(\hat{a})(\phi_\mu)}
		+
		\skal{\phi_\bot}{\pi(\hat{a})(\phi_\bot)}
		\\
		&=
		\underbrace{\skal{\phi_\mu}{\pi(a)(\phi_\mu)}}_{\ge 0}{}
		+
		\skal{\phi_\bot}{\pi(a)(\phi_\bot)}
		+
		\frac{1}{2\epsilon}
		\underbrace{
			\skal[\big]{\pi(a-\Unit)(\phi_\bot)}{\pi\big((\momentmap-\mu\Unit)^2(a-\Unit)\big)(\phi_\bot)}  
		}_{
			\ge \epsilon \skal{\pi(a-\Unit)(\phi_\bot)}{\pi(a-\Unit)(\phi_\bot)} \textup{ by assumption}
		}
		\\  
		&\ge
		\skal[\big]{\phi_\bot}{\pi\big(a+(a-\Unit)^2/2 \big)(\phi_\bot)}
		\\
		&=
		\skal[\big]{\phi_\bot}{\pi\big((a^2 + \Unit)/2\big)(\phi_\bot)}
		\\
		&\ge
		0 \komma
	\end{align*}
	showing that $\hat{a} \in \pi^{-1}\big( \Adbar(\Dom)^+_\Hermitian \big) = \A_\Hermitian^+$. It follows that $\hat{a} \in (\A^{\lie u_1})^+_\Hermitian$,
	hence $a \in (\A^{\lie u_1})^+_\Hermitian + ( \genSId{\momentmap-\mu} )_\Hermitian$.
	
	Since $\pi(a)(\mathcal E_\mu) \subseteq \mathcal E_\mu$ for all $a \in \A^{\lie u_1}$
	and since $\chi_\psi$ is an eigenstate of $\momentmap$ with eigenvalue $\mu$ for all normalized $\psi \in \mathcal E_\mu$,
	it follows that $\pi_{\mu\mred}$ is a well-defined $^*$\=/representation of $\A_{\mu\mred}$,
	and that $(\A_{\mu\mred})^+_\Hermitian \subseteq \pi_{\mu\mred}^{-1}\big( \Adbar(\mathcal{E}_\mu)^+_\Hermitian \big)$.
	Using \eqref{eq:Rmucharacterization} one can check that $\pi_{\mu\mred}$ is injective.
	In order to show that $(\A_{\mu\mred})^+_\Hermitian \supseteq \pi_{\mu\mred}^{-1}\big( \Adbar(\mathcal{E}_\mu)^+_\Hermitian \big)$,
	let $[a]_\mu \in \pi_{\mu\mred}^{-1}\big( \Adbar(\mathcal{E}_\mu)^+_\Hermitian \big)$ be given.
	Since $\pi_{\mu\mred}$ is injective, $[a]_\mu$ is necessarily Hermitian, and therefore has a representative $a \in (\A^{\lie u_1})_\Hermitian$
	(which e.g.~can be constructed as the Hermitian part of any representative). 
	From $\skal{\psi}{\pi(a)(\psi)} = \skal{\psi}{\pi_{\mu\mred}([a]_\mu)(\psi)} \ge 0$ for all $\psi \in \mathcal E_\mu$
	it follows that $a \in \R_\mu$, so $[a]_\mu \in (\A_{\mu\mred})^+_\Hermitian$.
\end{proof}
If Proposition~\ref{proposition:Rmucharacterization} applies, then the general reduction scheme from \cite{schmitt.schoetz:preprintSymmetryReductionOfStatesI}
yields the naively expected result for ordered $^*$\=/algebras of operators. This is completely analogous to
the reduction of Poisson manifolds discussed in \cite[Sec.~4]{schmitt.schoetz:preprintSymmetryReductionOfStatesI}, with evaluation
functionals at points of the $\mu$-levelset being replaced by vector states of $\mu$-eigenvectors.

However, the assumptions of Proposition~\ref{proposition:Rmucharacterization} are not fulfilled in
all ``non-pathological'' cases: An instructive example with different behaviour is the reduction of the Weyl algebra with
respect to translation symmetry that has been examined in \cite[Sec.~5]{schmitt.schoetz:preprintSymmetryReductionOfStatesI}.
There, the momentum operator has continuous spectrum and no eigenvectors. In contrast to this, one finds for momentum operators with discrete spectrum:
\begin{proposition} \label{proposition:discreteSpectrumGood}
  Let $\A$ be an ordered $^*$\=/algebra, $\momentmap \in \A_\Hermitian$, $\Dom$ a pre-Hilbert space and $\pi \colon \A \to \Adbar(\Dom)$ an injective $^*$\=/representation
  of the $^*$\=/algebra underlying $\A$ such that $\A^+_\Hermitian = \pi^{-1}\big( \Adbar(\Dom)^+_\Hermitian\big)$.
  Denote again the $\mu$-eigenspace of $\pi(\momentmap)$ by $\mathcal{E}_\mu$ like in \eqref{eq:mueigenspace}.
  Moreover, assume that the set $\set{\mu \in \RR}{\mathcal{E}_\mu \neq \{0\}}$ of eigenvalues of $\pi(\momentmap)$
  is discrete and that $\Dom = \bigoplus_{\mu \in \RR} \mathcal{E}_\mu$ as vector spaces. 
  Then the assumptions of the previous Proposition~\ref{proposition:Rmucharacterization}
  are fulfilled for all $\mu \in \RR$, i.e.~the decomposition $\Dom = \mathcal{E}_\mu \oplus \mathcal{E}_\mu^\bot$
  holds and there exists $\epsilon \in {]0,\infty[}$ such that 
  $\skal[\big]{\phi}{\pi\big((\momentmap-\mu\Unit)^2\big)(\phi)} \ge \epsilon \skal{\phi}{\phi}$
  for all $\phi \in \mathcal{E}_\mu^\bot$.
\end{proposition}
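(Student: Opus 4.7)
The plan is to exploit the symmetry of $\pi(\momentmap)$ together with the assumed algebraic direct sum decomposition of $\Dom$ to obtain both claims at once, and then derive the coercivity estimate from a uniform spectral gap.

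First, I would observe that eigenspaces for distinct eigenvalues are orthogonal: since $\momentmap^* = \momentmap$ and $\pi$ is a $^*$\=/representation, $\pi(\momentmap)$ is symmetric, so for $\mu \neq \mu'$ and any $\psi \in \mathcal{E}_\mu$, $\psi' \in \mathcal{E}_{\mu'}$ one has $\mu\skal{\psi}{\psi'} = \skal{\pi(\momentmap)(\psi)}{\psi'} = \skal{\psi}{\pi(\momentmap)(\psi')} = \mu'\skal{\psi}{\psi'}$, forcing $\skal{\psi}{\psi'} = 0$. Hence $\bigoplus_{\mu' \neq \mu} \mathcal{E}_{\mu'} \subseteq \mathcal{E}_\mu^\bot$. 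For the reverse inclusion, write an arbitrary $\phi \in \mathcal{E}_\mu^\bot$ according to $\Dom = \bigoplus_{\mu' \in \RR} \mathcal{E}_{\mu'}$ as a finite sum $\phi = \phi_\mu + \sum_{\mu' \neq \mu} \phi_{\mu'}$; the condition $\phi \perp \mathcal{E}_\mu$ then gives $0 = \skal{\phi_\mu}{\phi} = \skal{\phi_\mu}{\phi_\mu}$, i.e.\ $\phi_\mu = 0$. This yields the decomposition $\Dom = \mathcal{E}_\mu \oplus \mathcal{E}_\mu^\bot$ with $\mathcal{E}_\mu^\bot = \bigoplus_{\mu' \neq \mu} \mathcal{E}_{\mu'}$.

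For the lower bound, I would use discreteness of the eigenvalue set (read, as is standard in spectral theory, as having no accumulation points in $\RR$) to produce $\epsilon \in {]0,\infty[}$ such that $(\mu' - \mu)^2 \ge \epsilon$ for every eigenvalue $\mu' \neq \mu$. Given any $\phi \in \mathcal{E}_\mu^\bot$, expand it as a finite orthogonal sum $\phi = \sum_{\mu' \neq \mu} \phi_{\mu'}$ with $\phi_{\mu'} \in \mathcal{E}_{\mu'}$; since $\pi((\momentmap - \mu\Unit)^2)(\phi_{\mu'}) = (\mu' - \mu)^2 \phi_{\mu'}$, the estimate
\[
\skal[\big]{\phi}{\pi\big((\momentmap - \mu\Unit)^2\big)(\phi)} = \sum_{\mu' \neq \mu}(\mu' - \mu)^2 \skal{\phi_{\mu'}}{\phi_{\mu'}} \ge \epsilon \sum_{\mu' \neq \mu} \skal{\phi_{\mu'}}{\phi_{\mu'}} = \epsilon \skal{\phi}{\phi}
\]
is then immediate.

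The only delicate point is the interpretation of ``discrete'': one needs a positive spectral gap around \emph{every} $\mu \in \RR$ (not only at the eigenvalues themselves), which requires the eigenvalue set to be closed discrete in $\RR$, i.e.\ locally finite. This is automatic in the intended application to $\pi(\momentmap)$ on the Fock space, whose eigenvalues form the closed discrete set $\set{\hbar k}{k \in \NN_0}$, so no further work is needed.
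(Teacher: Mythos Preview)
Your proof is correct and follows essentially the same approach as the paper: orthogonality of distinct eigenspaces gives $\mathcal{E}_\mu^\bot = \bigoplus_{\mu' \neq \mu} \mathcal{E}_{\mu'}$, and the spectral gap $\epsilon = \min\set{(\mu-\mu')^2}{\mu' \neq \mu \text{ an eigenvalue}}$ yields the coercivity estimate via the finite orthogonal expansion. Your closing remark about ``discrete'' needing to mean closed discrete (locally finite) in $\RR$ is a fair caveat that the paper leaves implicit when it simply asserts this minimum lies in ${]0,\infty[}$.
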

\begin{proof}
  Let $\mu \in \RR$ be given. Then $\mathcal{E}_\mu^\bot = \bigoplus_{\mu' \in \RR \setminus \{\mu\}} \mathcal{E}_{\mu'}$
  because eigenvectors of $\pi(\momentmap)$ to different eigenvalues are orthogonal, and so $\Dom = \mathcal{E}_\mu \oplus \mathcal{E}_\mu^\bot$.
  If $\mathcal{E}_\mu^\bot = \{0\}$, then there is nothing else to show. Otherwise, let
  \begin{equation*}
    \epsilon \coloneqq \min \set[\big]{ (\mu-\mu')^2 }{ \mu' \in \RR \setminus\{\mu\} \textup{ such that }\mathcal{E}_{\mu'} \neq \{0\}} \in {]0,\infty[}
    \komma
  \end{equation*}
  then $\skal[\big]{\phi}{\pi\big((\momentmap-\mu\Unit)^2\big)(\phi)} \ge \epsilon \skal{\phi}{\phi}$ for all 
  $\phi \in \mathcal{E}_{\mu'}$ with $\mu' \in \RR \setminus\{\mu\}$, hence even for all $\phi \in \mathcal{E}_\mu^\bot$.
\end{proof}
\begin{remark} \label{remark:totalspacequantization}
  These assumptions hold in at least one important class of examples:
  In \cite{bordemann.roemer:totalSpaceQuantizationOfKaehlerManifolds}, the ``total space quantization'' of compact Kähler manifolds was discussed, essentially
  quantizing a $^*$\=/algebra of functions on a holomorphic line bundle over a compact Kähler manifold, which results in a
  construction carrying the same information as the usual quantization of compact Kähler manifolds. The relation between this
  ``total space quantization'' of compact Kähler manifolds and the usual one that quantizes a $^*$\=/algebra of functions on the Kähler manifold itself,
  is essentially given by the above reduction scheme, with a representation space decomposing as a direct sum of eigenspaces to a discrete set of eigenvalues.
  The quantization of $\CC\PP^n$ that will be discussed in the following is one typical example thereof, its ``total space quantization'' is given by
  the Wick star product on $\CC^{1+n}$.
\end{remark}

\section{Reduction of the Wick Star Product on \texorpdfstring{$\CC^{1+n}$}{C(1+n)}} \label{sec:reductionWick}

Fix a number $n\in \NN$ for the rest of this article.
As an example of the reduction scheme for ordered $^*$\=/algebras from the last section
we consider the Wick star product, which describes the Weyl algebra of canonical commutation relations:

Denote by $z_0,\dots,z_n, \cc{z_0}, \dots, \cc{z_n} \colon \CC^{1+n} \to \CC$ the standard complex coordinates on $\CC^{1+n}$ and their
complex conjugates, and write
\begin{equation}
  z^K \coloneqq (z_0)^{K_0} \dots (z_n)^{K_n}\quad\quad\text{and} \quad\quad \cc{z}^K \coloneqq (\cc{z_0})^{K_0} \dots (\cc{z_n})^{K_n}
\end{equation}
with multiindex $K\in \NN_0^{1+n}$ for the holomorphic and antiholomorphic monomials, respectively.
We will use standard multiindex notation, especially $\abs{K} \coloneqq K_0 + \dots + K_n$ and $K! \coloneqq K_0! \dots K_n!$.
Moreover, for all $k,\ell \in \NN_0$, let $\Polynomials^{k,\ell}(\CC^{1+n})$ be the $\CC$-linear span of the monomials $z^K \cc{z}^L$ with
$K,L \in \NN_0^{1+n}$ fulfilling $\abs{K} = k$ and $\abs{L} = \ell$, and let
\begin{equation}
  \Polynomials(\CC^{1+n}) \coloneqq \bigoplus_{k,\ell \in \NN_0} \Polynomials^{k,\ell}(\CC^{1+n})
  \label{eq:polynomials}
\end{equation}
be their direct sum, i.e.~the space of all $\CC$-valued (not necessarily holomorphic) polynomial functions on $\CC^{1+n}$. 
The \emph{Wick star product} is, for any $\hbar \in \RR$, the bilinear associative product $\star_\hbar$ on $\Polynomials(\CC^{1+n})$ which is defined as
\begin{equation} \label{eq:def:wickproduct}
  f \star_\hbar g
  \coloneqq
  \sum_{K\in \NN_0^{1+n}} \frac{\hbar^{\abs{K}}}{K!} \frac{\partial^{\abs{K}} f}{\partial \cc{z}^K} \frac{\partial^{\abs{K}} g}{\partial z^K}
  =
  \sum_{t=0}^\infty \frac{\hbar^t}{t!} \sum_{i_1,\dots,i_t = 0}^n \frac{\partial^t f}{\partial \cc{z}_{i_1} \dots \partial \cc{z}_{i_t}} \frac{\partial^t g}{\partial z_{i_1} \dots \partial z_{i_t}}
  \in
  \Polynomials(\CC^{1+n})
\end{equation}
for all $f,g\in \Polynomials(\CC^{1+n})$, where
\begin{equation}
  \frac{\partial^{\abs{K}}}{\partial z^K} \coloneqq \bigg( \frac{\partial}{\partial z_0}\bigg)^{K_0} \dots \bigg( \frac{\partial}{\partial z_n}\bigg)^{K_n}
  \quad\quad\text{and} \quad\quad
  \frac{\partial^{\abs{K}}}{\partial \cc{z}^K} \coloneqq \bigg( \frac{\partial}{\partial \cc{z_0}}\bigg)^{K_0} \dots \bigg( \frac{\partial}{\partial \cc{z_n}}\bigg)^{K_n}
\end{equation}
for all $K\in \NN_0^{1+n}$. One can check that the complex vector space $\Polynomials(\CC^{1+n})$, together with the Wick star product and the
$^*$\=/involution of pointwise complex conjugation, becomes a $^*$-algebra whose unit is the constant $1$-function. This $^*$\=/algebra
will be denoted by $\Polynomials_\hbar(\CC^{1+n})$. Of course, if $\hbar = 0$,
then $\star_0$ is just the pointwise product. Some elementary properties of the Wick star product are easy to check:
\begin{proposition} \label{proposition:starhbarbasics}
  For all $f \in \Polynomials^{k,\ell}(\CC^{1+n})$, $g\in \Polynomials^{r,s}(\CC^{1+n})$ and all $\hbar \in \RR$, the product $f \star_\hbar g$
  is of the form
  \begin{equation}
    f \star_\hbar g = \sum_{t=0}^{\min \{\ell, r\}} h_t
  \end{equation}
  with certain $h_t \in \Polynomials^{k+r-t,\ell+s-t}(\CC^{1+n})$, and especially $h_0 = fg$. Consequently,
  the unital $^*$\=/subalgebra of $\Polynomials_\hbar(\CC^{1+n})$ that is generated by the degree-$1$-monomials
  $z_0,\dots,z_n,\cc{z}_0,\dots,\cc{z}_n$ is whole $\Polynomials_\hbar(\CC^{1+n})$. Moreover, for all $f,g\in\Polynomials(\CC^{1+n})$,
  \begin{equation}
    \lim_{\hbar \to 0} \frac{f \star_\hbar g -g \star_\hbar f}{\I\hbar} 
    =
    \frac{1}{\I} \sum_{j=0}^n \bigg( \frac{\partial f}{\partial \cc{z_j}} \frac{\partial g}{\partial z_j} - \frac{\partial f}{\partial z_j} \frac{\partial g}{\partial \cc{z_j}}\bigg)
    =
    \poi{f}{g}
    \komma
    \label{eq:poissonlim}
  \end{equation}
  where $\poi{\argument}{\argument}$ is the Poisson bracket associated to the standard Kähler structure of $\CC^{1+n}$.
\end{proposition}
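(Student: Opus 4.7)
The plan is to read all three claims directly off the explicit formula \eqref{eq:def:wickproduct}, using nothing beyond bookkeeping of bidegrees. First I would analyze how the operators $\partial^{\abs{K}}/\partial\cc{z}^K$ and $\partial^{\abs{K}}/\partial z^K$ act on the bigraded space \eqref{eq:polynomials}. For $\abs{K}=t$, differentiating $t$ times with respect to antiholomorphic coordinates lowers the antiholomorphic degree by $t$ (and kills everything of antiholomorphic degree less than $t$), so that $\partial^t f/\partial \cc{z}^K \in \Polynomials^{k,\ell-t}(\CC^{1+n})$, with the convention that this vanishes when $t>\ell$. Symmetrically $\partial^t g / \partial z^K \in \Polynomials^{r-t,s}(\CC^{1+n})$, zero when $t>r$. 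Multiplying these two factors gives a contribution in $\Polynomials^{k+r-t,\ell+s-t}(\CC^{1+n})$, so collecting all $K$ with $\abs{K}=t$ produces the desired $h_t$ in that space; the sum truncates at $t=\min\{\ell,r\}$, and at $t=0$ no derivatives act, whence $h_0=fg$.

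Next, the subalgebra statement I would deduce as an immediate corollary. Two special cases suffice: if $\ell=0$ (so $f$ is holomorphic), then $\partial^t f/\partial\cc{z}^K=0$ for all $t\ge 1$, forcing $f\star_\hbar g = fg$; and symmetrically $f\star_\hbar g = fg$ whenever $g$ is antiholomorphic ($r=0$). Applied iteratively, this yields $z_{i_1}\star_\hbar\cdots\star_\hbar z_{i_k}=z_{i_1}\cdots z_{i_k}$, an analogous identity for the $\cc{z}_j$, and finally $z^K\star_\hbar \cc{z}^L=z^K\cc{z}^L$. Since every monomial $z^K\cc{z}^L$ spans $\Polynomials(\CC^{1+n})$, the unital $^*$-subalgebra generated by the degree-$1$ monomials under $\star_\hbar$ coincides with the whole space.

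For the classical limit, I would expand $f\star_\hbar g - g\star_\hbar f$ in powers of $\hbar$ via \eqref{eq:def:wickproduct}. The $t=0$ contributions are $fg-gf$ and cancel. Dividing by $\I\hbar$, the $t=1$ contribution is independent of $\hbar$ and equals
\begin{equation*}
  \frac{1}{\I}\sum_{j=0}^n\bigg(\frac{\partial f}{\partial \cc{z_j}}\frac{\partial g}{\partial z_j} - \frac{\partial g}{\partial \cc{z_j}}\frac{\partial f}{\partial z_j}\bigg)\komma
\end{equation*}
which after swapping names in the second summand yields the claimed expression. All terms with $t\ge 2$ retain a factor $\hbar^{t-1}$ and vanish as $\hbar\to 0$. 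Identifying the resulting bidifferential operator with the Poisson bracket of the standard Kähler structure on $\CC^{1+n}$ is then a routine check from the Fubini--Study form on $\CC^{1+n}$ in the coordinates $z_j,\cc{z}_j$.

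I do not expect any genuine obstacle: the entire proposition is a direct consequence of the combinatorics built into \eqref{eq:def:wickproduct}. The only point requiring mild care is keeping track of bidegrees and of the annihilation of derivatives on (anti)holomorphic factors in the first part, after which the generation statement and the semiclassical limit follow with essentially no further effort.
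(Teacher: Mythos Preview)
Your argument is correct and is precisely the direct verification the paper has in mind; the paper omits the proof entirely, introducing the proposition only with the remark that these properties ``are easy to check''. One terminological slip: in the last line you speak of the ``Fubini--Study form on $\CC^{1+n}$'', but the standard K\"ahler structure on $\CC^{1+n}$ is the flat one (the Fubini--Study form lives on $\CC\PP^n$); this does not affect the argument.
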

We will in the following be mainly interested in the case $\hbar > 0$, the case $\hbar < 0$ 
can be reduced to the positive one, which will be discussed in Section~\ref{sec:application}.

For $\hbar \in {]0,\infty[}$, a $^*$\=/representation of the $^*$\=/algebra $\Polynomials_\hbar(\CC^{1+n})$ can be obtained by the 
GNS-construction for the evaluation functional at $0$. This results in a $^*$\=/representation
by polynomial holomorphic differential operators on the pre-Hilbert space $\PolyHol(\CC^{1+n})$ of holomorphic polynomials on $\CC^{1+n}$,
\begin{equation}
  \PolyHol(\CC^{1+n}) \coloneqq \bigoplus_{k \in \NN_0} \Polynomials^{k,0}(\CC^{1+n})
\end{equation}
with inner product $\skalhbar{\argument}{\argument}$ defined as
\begin{align}
  \skalhbar{f}{g} &\coloneqq \frac{1}{(\hbar\pi)^{1+n}}\integral{\CC^{1+n}}{\cc{f} g \exp(- \momentmap/\hbar )}{ \D^{1+n} z \,\D^{1+n} \cc{z} }
  \label{eq:skalhbar}
  \shortintertext{with}
  \momentmap &\coloneqq \sum_{j=0}^n z_j \cc{z}_j \in \Polynomials^{1,1}(\CC^{1+n}) \label{eq:momentmap}
\end{align}
and where $\D^{1+n} z \,\D^{1+n} \cc{z}$ is the Lebesgue measure on $\CC^{1+n}$. This inner product especially fulfils
\begin{equation}
  \skalhbar{z^K}{z^L} = \delta_{K,L} \hbar^{\abs{K}} K!
\end{equation}
for all $K,L \in \NN_0^{1+n}$, where $\delta_{K,L} \coloneqq 1$ if $K=L$ and otherwise $\delta_{K,L} \coloneqq 0$. The 
pre-Hilbert space $\PolyHol(\CC^{1+n})$ (or its completion) is usually referred to as the \emph{Fock space} or \emph{Segal--Bargmann space}.
For every $\hbar \in {]0,\infty[}$, the map $\pi_\hbar \coloneqq \Polynomials_\hbar(\CC^{1+n}) \to \Adbar\big( \PolyHol(\CC^{1+n})\big)$,
$f \mapsto \pi_\hbar(f)$, defined by
\begin{equation}
  \pi_\hbar(z^K \cc{z}^L)
  \coloneqq
  z^K \hbar^{\abs{L}} \frac{\partial^{\abs{L}}}{\partial z^L}
\end{equation}
for all $K,L\in \NN_0^{1+n}$, describes an injective $^*$-representation of 
$\Polynomials_\hbar(\CC^{1+n})$ by differential operators on $\PolyHol(\CC^{1+n})$.
Via this $^*$\=/representation $\pi_\hbar$, the $^*$\=/algebra $\Polynomials_\hbar(\CC^{1+n})$
is isomorphic to the Weyl algebra of canonical commutation relations in its representation on the Fock space.
One can now pull back the quadratic module $\Adbar( \PolyHol(\CC^{1+n}))^+_\Hermitian$ via $\pi_\hbar$ to $\Polynomials_\hbar(\CC^{1+n})$,
thus turning $\Polynomials_\hbar(\CC^{1+n})$ into an ordered $^*$\=/algebra:
\begin{definition}
  For $\hbar \in {]0,\infty[}$ define the quadratic module
  $\Polynomials_\hbar(\CC^{1+n})^+_\Hermitian \coloneqq \pi^{-1}_\hbar\big( \Adbar( \PolyHol(\CC^{1+n}))^+_\Hermitian \big)$.
\end{definition}
The group $\group{U}(1+n)$ of unitary $(1+n)\times(1+n)$\,-matrices acts on $\CC^{1+n}$ by multiplication from the left,
$\argument\acts\argument \colon \group{U}(1+n) \times \CC^{1+n} \to \CC^{1+n}$, $(u,w) \mapsto u \acts w \coloneqq uw$. 
From this one obtains a right action on spaces of $(k,\ell)$-homogeneous polynomials, $k,\ell \in \NN_0$,
namely $\argument \racts \argument \colon \Polynomials^{k,\ell}(\CC^{1+n}) \times \group{U}(1+n) \to \Polynomials^{k,\ell}(\CC^{1+n})$,
$(f,u) \mapsto f \racts u$ with $(f \racts u)(w) \coloneqq f(u\acts w)$ for all $w\in \CC^{1+n}$, and consequently also actions on
$\Polynomials_\hbar(\CC^{1+n})$ and $\PolyHol(\CC^{1+n})$. Note that especially $z_i \racts u = \sum_{j=0}^n u_{i,j} z_j$ holds for all $i \in \{0,\dots,n\}$.
It follows immediately from the second identity in \eqref{eq:def:wickproduct} that $\star_\hbar$ is $\group{U}(1+n)$-equivariant, i.e.
\begin{equation}
  (f \racts u) \star_\hbar (g\racts u) = (f\star_\hbar g) \racts u
\end{equation}
for all $f,g\in \Polynomials_\hbar(\CC^{1+n})$ and all $u\in \group{U}(1+n)$. Similarly, by $\group{U}(1+n)$-invariance of \eqref{eq:skalhbar},
the group $\group{U}(1+n)$ acts unitarly on $\PolyHol(\CC^{1+n})$, i.e.
\begin{equation}
  \skalhbar{f\racts u}{g\racts u} = \skalhbar{f}{g}
\end{equation}
for all $f,g\in \PolyHol(\CC^{1+n})$ and all $u\in \group{U}(1+n)$. Moreover,
\begin{equation}
  \pi_\hbar(f\racts u)(g\racts u) = \pi_\hbar(f)(g) \racts u
\end{equation}
holds for all $f \in \Polynomials_\hbar(\CC^{1+n})$, $g\in \PolyHol(\CC^{1+n})$, and all $u \in\group{U}(1+n)$,
which can easily be checked for generators $z_0,\dots,z_n,\cc{z}_0, \dots, \cc{z}_n$ in place of $f$.
It especially follows that the action of $\group{U}(1+n)$ on $\Polynomials_\hbar(\CC^{1+n})$ preserves the quadratic module $\Polynomials_\hbar(\CC^{1+n})^+_\Hermitian$.

The action of the diagonal $\group{U}(1)$-subgroup of $\group{U}(1+n)$ is generated by the polynomial $\momentmap$ from \eqref{eq:momentmap},
which means that
\begin{equation}
  \frac{\D}{\D t}\at[\bigg]{0} \big(f \racts \E^{\I t} \Unit_{1+n}\big) = \I \deg_{\holomorphic-\cc{\holomorphic}} f = \frac{f \star_\hbar \momentmap - \momentmap \star_\hbar f}{\I \hbar}
\end{equation}
with $\Unit_{1+n} \in \group{U}(1+n)$ the $(1+n)\times(1+n)$\,-identity matrix, and where $\deg_{\holomorphic-\cc{\holomorphic}}$ is the
derivation on $\Polynomials_\hbar(\CC^{1+n})$
of holomorphic minus antiholomorphic degree, i.e.~$\deg_{\holomorphic-\cc{\holomorphic}} z^K \cc{z}^L = (\abs{K}-\abs{L}) z^K \cc{z}^L$
for all $K,L\in \NN_0^{1+n}$. The $\momentmap$-reduction of $\Polynomials_\hbar(\CC^{1+n})$ at arbitrary $\mu \in \RR$ is easy to describe
because Proposition~\ref{proposition:discreteSpectrumGood} applies to $\Polynomials_\hbar(\CC^{1+n})$ with $^*$\=/representation $\pi_\hbar$ on the Fock space:
The unital $^*$\=/subalgebra of invariant elements is
\begin{equation}
  \Polynomials_\hbar(\CC^{1+n})^{\lie u_1} = \bigoplus_{k\in \NN_0} \Polynomials^{k,k}(\CC^{1+n}) \label{eq:WeylESdecomp}
\end{equation}
and $\pi_\hbar(\momentmap) = \hbar \deg$ with $\deg \in \Adbar(\PolyHol(\CC^{1+n}))$ the degree derivation, $\deg z^K = \abs K z^K$ for all $K\in \NN_0^{1+n}$.
This especially means that $\pi_\hbar(\momentmap)$ has a discrete set of eigenvalues $\set{\hbar k}{k\in \NN_0}$ and that
$\PolyHol(\CC^{1+n})$ decomposes into a direct sum of eigenspaces $\Polynomials^{k,0}(\CC^{1+n})$, $k\in \NN_0$, of $\pi_\hbar(\momentmap)$.
Propositions~\ref{proposition:Rmucharacterization} and \ref{proposition:discreteSpectrumGood} therefore show:
\begin{corollary} \label{corollary:neu}
  For all $\hbar \in {]0,\infty[}$ and all $\mu \in \RR$, the quadratic module $\R_{\hbar,\mu}$ of the $\momentmap$-reduction of $\Polynomials_\hbar(\CC^{1+n})$ at $\mu$ is
  \begin{equation}
    \R_{\hbar,\mu}
    =
    \big(\Polynomials_\hbar(\CC^{1+n})^{\lie u_1}\big) ^+_\Hermitian + \big( \genSId{\momentmap-\mu} \big)_\Hermitian
    =
    \set[\bigg]{f \in \big(\Polynomials_\hbar(\CC^{1+n})^{\lie u_1}\big)_\Hermitian }{ \begin{array}{l}
                                                       \skalhbar{g}{\pi_\hbar(f)(g)} \ge 0\\
                                                       \textup{ for all }g \in \Polynomials^{k,0}(\CC^{1+n})
                                                     \end{array}
                                                  }
    \komma \label{eq:Rmu}
  \end{equation}
  and especially $\R_{\hbar,\mu} \neq \big(\Polynomials_\hbar(\CC^{1+n})^{\lie u_1}\big)_\Hermitian$ if and only if $\mu \in \set{\hbar k}{k\in \NN_0}$.
  Moreover, if $\mu = \hbar k$ with $k\in \NN_0$, then the codimension of $\supp_\CC \R_{\hbar,\hbar k}$ in $\Polynomials_\hbar(\CC^{1+n})^{\lie u_1}$
  is finite, for every $f\in \Polynomials_\hbar(\CC^{1+n})^{\lie u_1}$ there exists a unique $g\in \Polynomials^{k,k}(\CC^{1+n})$ such that
  $f-g \in \supp_\CC \R_{\hbar,\hbar k}$, and $\Polynomials_\hbar(\CC^{1+n})_{\hbar k\mred}$
  is isomorphic as an ordered $^*$\=/algebra to $\Adbar(\Polynomials^{k,0}(\CC^{1+n}))$
  via the reduced representation $(\pi_\hbar)_{\hbar k\mred}$ constructed in Proposition~\ref{proposition:Rmucharacterization}.
  If $\mu \in \RR \setminus \set{\hbar k}{k\in \NN_0}$, however, then $\Polynomials_\hbar(\CC^{1+n})_{\mu\mred}= \{0\}$.
\end{corollary}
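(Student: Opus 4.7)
The plan is to deduce everything from Propositions~\ref{proposition:Rmucharacterization} and~\ref{proposition:discreteSpectrumGood}. Their hypotheses hold essentially by inspection: $\pi_\hbar(\momentmap) = \hbar\deg$ acts as multiplication by $\hbar k$ on the subspace $\Polynomials^{k,0}(\CC^{1+n})$, so the set of eigenvalues of $\pi_\hbar(\momentmap)$ equals the discrete set $\set{\hbar k}{k\in\NN_0}$, with eigenspaces $\mathcal{E}_{\hbar k} = \Polynomials^{k,0}(\CC^{1+n})$ (and $\mathcal{E}_\mu = \{0\}$ for $\mu$ outside this set), while $\PolyHol(\CC^{1+n})$ itself decomposes as the direct sum of these eigenspaces. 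Proposition~\ref{proposition:discreteSpectrumGood} therefore makes Proposition~\ref{proposition:Rmucharacterization} applicable for every $\mu \in \RR$, immediately yielding both characterizations of $\R_{\hbar,\mu}$ in \eqref{eq:Rmu}.

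The dichotomy in $\mu$ is then straightforward. For $\mu \notin \set{\hbar k}{k\in\NN_0}$ the eigenspace $\mathcal{E}_\mu$ is trivial, so the positivity condition in \eqref{eq:Rmu} is vacuous; hence $\R_{\hbar,\mu} = \big(\Polynomials_\hbar(\CC^{1+n})^{\lie u_1}\big)_\Hermitian$, $\Vanishing_\mu = \Polynomials_\hbar(\CC^{1+n})^{\lie u_1}$, and $\Polynomials_\hbar(\CC^{1+n})_{\mu\mred} = \{0\}$. For $\mu = \hbar k$ the element $-\Unit$ is not in $\R_{\hbar,\hbar k}$ because $\skalhbar{g}{\pi_\hbar(-\Unit)(g)} = -\skalhbar{g}{g} < 0$ for any nonzero $g \in \Polynomials^{k,0}(\CC^{1+n})$, establishing the strict inequality $\R_{\hbar,\hbar k} \neq \big(\Polynomials_\hbar(\CC^{1+n})^{\lie u_1}\big)_\Hermitian$.

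The remaining finer claims at $\mu = \hbar k$ hinge on one explicit computation. Unwinding the definition of $\pi_\hbar$, for all multiindices with $\abs{K} = \abs{L} = \abs{M} = k$ one finds
\[
  \pi_\hbar(z^K \cc{z}^L)(z^M) = \delta_{L,M}\,\hbar^k\, L!\, z^K,
\]
because the componentwise inequality $L \le M$ combined with $\abs{L} = \abs{M}$ forces $L = M$. Hence $\pi_\hbar|_{\Polynomials^{k,k}(\CC^{1+n})}$ sends the monomial basis to nonzero scalar multiples of the matrix units of $\Adbar(\Polynomials^{k,0}(\CC^{1+n}))$ in the basis $\{z^M\}_{\abs{M}=k}$; since $\Polynomials^{k,0}(\CC^{1+n})$ is finite-dimensional, $\Adbar$ here coincides with all endomorphisms, and a dimension count $\big(\dim\Polynomials^{k,k}(\CC^{1+n}) = \binom{n+k}{k}^2 = \dim\Adbar(\Polynomials^{k,0}(\CC^{1+n}))\big)$ shows that this restriction is a $\CC$-linear bijection.

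From this bijection all remaining claims follow formally. Given $f \in \Polynomials_\hbar(\CC^{1+n})^{\lie u_1}$, let $g \in \Polynomials^{k,k}(\CC^{1+n})$ be the unique preimage of $(\pi_\hbar)_{\hbar k\mred}([f]_{\hbar k})$ under this bijection; then $f - g \in \supp_\CC \R_{\hbar,\hbar k}$, and uniqueness of $g$ follows from injectivity of $\pi_\hbar|_{\Polynomials^{k,k}(\CC^{1+n})}$. Thus $\Polynomials^{k,k}(\CC^{1+n})$ is a vector-space complement of $\supp_\CC \R_{\hbar,\hbar k}$ in $\Polynomials_\hbar(\CC^{1+n})^{\lie u_1}$, giving the finite codimension. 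Combining this surjectivity with the injectivity and the positive-cone identification from Proposition~\ref{proposition:Rmucharacterization} upgrades $(\pi_\hbar)_{\hbar k\mred}$ to the claimed isomorphism of ordered $^*$\=/algebras onto $\Adbar(\Polynomials^{k,0}(\CC^{1+n}))$. The only step that requires any real work is the displayed matrix-unit computation, and even that is routine.
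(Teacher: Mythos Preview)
Your proof is correct and follows essentially the same route as the paper's: both reduce the first assertions to Propositions~\ref{proposition:Rmucharacterization} and~\ref{proposition:discreteSpectrumGood}, and both extract the finer claims at $\mu = \hbar k$ from the single explicit computation showing that the restriction of $\pi_\hbar$ to $\Polynomials^{k,k}(\CC^{1+n})$ hits all matrix units of $\Adbar(\Polynomials^{k,0}(\CC^{1+n}))$, followed by a dimension count. The paper phrases the computation as $\skalhbar{z^K}{\pi_\hbar(z^L\cc{z}^M)(z^N)} = \delta_{K,L}\delta_{M,N}\hbar^{2k}$ (up to a factorial), whereas you compute $\pi_\hbar(z^K\cc{z}^L)(z^M)$ directly; these are equivalent.
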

\begin{proof}
  It is only left to show that for all $k\in \NN_0$, the reduced representation $(\pi_\hbar)_{\hbar k\mred}$ from Proposition~\ref{proposition:Rmucharacterization}
  is surjective, and that every $f\in \Polynomials_\hbar(\CC^{1+n})^{\lie u_1}$ coincides modulo $\supp_\CC \R_{\hbar,\hbar k}$ with a unique $g\in \Polynomials^{k,k}(\CC^{1+n})$.
  As $\Polynomials_\hbar(\CC^{1+n})_{\hbar k\mred} = \Polynomials_\hbar(\CC^{1+n})^{\lie u_1} / \supp_\CC\R_{\hbar,\hbar k}$,
  both these statements follow from the identity
  \begin{equation*}
    \skalhbar[\big]{z^K}{\pi_\hbar(z^L \cc{z}^M)(z^N)} = \delta_{K,L} \delta_{M,N} \hbar^{2k}
  \end{equation*}
  for $K,L,M,N \in \NN_0^{1+n}$ with $\abs{K} = \abs{L} = \abs{M} = \abs{N} = k$, and by counting dimensions.
\end{proof}
Note that the dimension of $\Polynomials^{k,0}(\CC^{1+n})$ is $d_{n,k} \coloneqq \binom{n+k}{k}$ 
so that $\Polynomials_\hbar(\CC^{1+n})_{\hbar k\mred}$ is isomorphic to the matrix $^*$\=/algebra $\CC^{d_{n,k} \times d_{n,k}}$
with the quadratic module of positive-semidefinite matrices.

\begin{remark}
	The reduced algebra $\Polynomials_\hbar(\CC^{1+n})_{\hbar k\mred}$ is related to the geometry of $\CC\PP^n$ by noting that the spaces $\Polynomials^{k,0}(\CC^{1+n})$, $k\in \NN_0$,
	of $k$-homogeneous holomorphic polynomials on $\CC^{1+n}$ are isomorphic to the spaces of all holomorphic sections of the $k$-th tensor power of
	the hyperplane bundle (the dual of the tautological bundle) over $\CC\PP^n$. This actually works in all examples of the type mentioned in 
	Remark~\ref{remark:totalspacequantization}, turning the reduction procedure into the inverse of the construction of the total space quantization
	of \cite{bordemann.roemer:totalSpaceQuantizationOfKaehlerManifolds}.
\end{remark}
Seeing Corollary~\ref{corollary:neu} in the general context of \cite{schmitt.schoetz:preprintSymmetryReductionOfStatesI} has the
advantage that it makes the connection to the commutative case more than a mere heuristic:
For $\hbar = 0$, consider the ordered $^*$\=/algebra $\Polynomials_0(\CC^{1+n})$ with the pointwise order, i.e.~with the
quadratic module $\Polynomials_0(\CC^{1+n})^+_\Hermitian$ of pointwise positive polynomials, and endowed with the
Poisson bracket of \eqref{eq:poissonlim}. Then the reduction scheme of \cite{schmitt.schoetz:preprintSymmetryReductionOfStatesI}
yields a completely analogous result, and one can even give a mostly algebraic description of the quadratic module
of the reduction:
\begin{proposition} \label{proposition:classicalReduction}
  \textup{(See \cite[Sec.~6]{schmitt.schoetz:preprintSymmetryReductionOfStatesI}.)}
  For all $\mu \in {]0,\infty[}$, the quadratic module $\R_{0,\mu}$ of the $\momentmap$-reduction of $\Polynomials_0(\CC^{1+n})$ at $\mu$ is
  \begin{equation}
    \R_{0,\mu}
    =
    \big(\Polynomials_0(\CC^{1+n})^{\lie u_1}\big)^+_\Hermitian + \big( \genSId{\momentmap-\mu} \big)_\Hermitian
    =
    \set[\big]{f \in \big(\Polynomials(\CC^{1+n})^{\lie u_1}\big)_\Hermitian }{ f(w) \ge 0 \textup{ for all }w \in \Levelset_\mu }
  \end{equation}
  with $\Levelset_\mu \coloneqq \set{w\in \CC^{1+n}}{\momentmap(w) = \mu}$ the $\mu$-levelset of $\momentmap$.
  Moreover, $\supp_\CC \R_{0,\mu} = \genSId{\momentmap-\mu}$ and
  $\Polynomials_0(\CC^{1+n})_{\mu\mred} = \Polynomials_0(\CC^{1+n})^{\lie u_1} / \genSId{\momentmap-\mu}$
  is isomorphic to the ordered $^*$\=/algebra of polynomial functions on $\CC\PP^n = \Levelset_\mu / \group{U}(1)$ with the pointwise order.
  Finally, $\R_{0,\mu}$ can also be described in a mostly algebraic way as
  \begin{equation}
    \R_{0,\mu}
    =
    \set[\big]{
      f \in (\Polynomials(\CC^{1+n})^{\lie u_1})_\Hermitian 
    }{
      f + \epsilon \Unit \in \big(\Polynomials_0(\CC^{1+n})^{\lie u_1} \big)^{++}_\Hermitian + \big( \genSId{\momentmap-\mu} \big)_\Hermitian\textup{ for all }\epsilon \in {]0,\infty[}
    }
    \punkt
    \label{eq:schmuedgen}
  \end{equation}
\end{proposition}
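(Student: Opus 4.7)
My plan is to reduce all four assertions of the proposition to a single decomposition lemma: every $\group{U}(1)$-invariant Hermitian polynomial $f$ that is pointwise nonnegative on $\Levelset_\mu$ admits a representation $f = g + (\momentmap-\mu\Unit)h$ with $g, h \in (\Polynomials(\CC^{1+n})^{\lie u_1})_\Hermitian$ and $g(w) \ge 0$ for all $w \in \CC^{1+n}$. Granting this lemma, the chain of inclusions
\begin{equation*}
  \big(\Polynomials_0(\CC^{1+n})^{\lie u_1}\big)^+_\Hermitian + \big( \genSId{\momentmap-\mu} \big)_\Hermitian
  \subseteq
  \R_{0,\mu}
  \subseteq
  \set[\big]{f \in (\Polynomials(\CC^{1+n})^{\lie u_1})_\Hermitian}{f(w) \ge 0 \textup{ for all }w \in \Levelset_\mu}
\end{equation*}
will close into a chain of equalities: the first inclusion holds because states are by definition nonnegative on pointwise nonnegative elements and because every eigenstate of $\momentmap$ of eigenvalue $\mu$ annihilates $\genSId{\momentmap-\mu}$ by condition \textit{ii.)} of the eigenstate definition; the second inclusion holds because each point evaluation $\omega_w$ at $w \in \Levelset_\mu$, restricted to $\Polynomials(\CC^{1+n})^{\lie u_1}$, is an eigenstate of $\momentmap$ of eigenvalue $\mu$; and the decomposition lemma provides the reverse of the second inclusion.

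For the lemma itself, I first expand $f = \sum_{k=0}^K f_{k,k}$ according to bidegree and choose $K$ large enough that this sum is exhaustive. Bidegree $(k,k)$ homogeneity gives $f_{k,k}(\lambda w) = |\lambda|^{2k} f_{k,k}(w)$ for all $\lambda \in \CC$, so specializing to $\lambda = \sqrt{\mu}/\|w\|$ yields $f_{k,k}(w) = (\momentmap(w)/\mu)^k\, f_{k,k}(\sqrt{\mu}\, w/\|w\|)$ for $w \ne 0$. Consequently the polynomial
\begin{equation*}
  F \coloneqq \sum_{k=0}^K \mu^k\, \momentmap^{K-k} f_{k,k} \in (\Polynomials(\CC^{1+n})^{\lie u_1})_\Hermitian
\end{equation*}
agrees on $\CC^{1+n}\setminus\{0\}$ with the map $w \mapsto \momentmap(w)^K\, f(\sqrt{\mu}\, w/\|w\|)$, which is a product of two nonnegative factors under the hypothesis on $f$; by continuity of the polynomial $F$, the inequality $F \ge 0$ extends to all of $\CC^{1+n}$. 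The telescoping identity $\momentmap^{K-k} - \mu^{K-k} = (\momentmap - \mu\Unit)\sum_{j=0}^{K-k-1} \momentmap^j\, \mu^{K-k-1-j}$ then shows $F - \mu^K f \in (\momentmap-\mu\Unit)(\Polynomials(\CC^{1+n})^{\lie u_1})_\Hermitian$, and because $\mu > 0$ division by $\mu^K$ yields the desired decomposition with $g = \mu^{-K} F \ge 0$.

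The support identity $\supp_\CC \R_{0,\mu} = \genSId{\momentmap-\mu}$ will follow from the same lemma: the inclusion $\supseteq$ is immediate, and conversely a Hermitian $f$ vanishing on $\Levelset_\mu$ yields an $F$ which vanishes on $\CC^{1+n}\setminus\{0\}$ and hence everywhere, so $f \in \genSId{\momentmap-\mu}$. The identification of the quotient $\Polynomials(\CC^{1+n})^{\lie u_1}/\genSId{\momentmap-\mu}$ with the pointwise-ordered $^*$\=/algebra of polynomial functions on $\CC\PP^n = \Levelset_\mu/\group{U}(1)$ is then standard invariant theory: restriction to $\Levelset_\mu$ followed by descent to the $\group{U}(1)$-quotient is surjective with kernel exactly $\genSId{\momentmap-\mu}$ by the support identity just proven.

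The final characterization \eqref{eq:schmuedgen} is where Schm\"udgen's Positivstellensatz enters: applied to the compact real algebraic set $\CC\PP^n$, strict positivity of $[f+\epsilon\Unit]_\mu$ forces it to be a sum of Hermitian squares in the coordinate ring of $\CC\PP^n$, which lifts back to the desired representation $f + \epsilon\Unit = \sigma + (\momentmap-\mu\Unit)h$ with $\sigma \in (\Polynomials_0(\CC^{1+n})^{\lie u_1})^{++}_\Hermitian$; the converse direction is immediate from pointwise nonnegativity of sums of Hermitian squares together with letting $\epsilon \to 0$. The main obstacle will be precisely this invariance: Schm\"udgen applied naively to the ambient $\CC^{1+n}$ with equality constraint $\momentmap = \mu$ produces sums of squares of polynomials that need not be $\group{U}(1)$-invariant, and averaging over $\group{U}(1)$ does not obviously stay within the class of sums of Hermitian squares of invariant polynomials. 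The remedy is to apply Schm\"udgen directly at the level of the compact quotient $\CC\PP^n$; this works precisely because $\CC\PP^n$ is compact, and the absence of such a geometric compact quotient in the non-commutative setting is exactly what makes the corresponding non-commutative Positivstellensatz substantially more subtle.
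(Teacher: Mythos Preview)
The paper does not give a proof of this proposition; it is quoted from the companion paper \cite[Sec.~6]{schmitt.schoetz:preprintSymmetryReductionOfStatesI}, so there is no ``paper's own proof'' to compare against. That said, your argument is correct and self-contained, and worth a brief comment on how it sits relative to the techniques used elsewhere in the present paper.

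Your decomposition lemma via the homogenization $F = \sum_{k} \mu^{k}\momentmap^{K-k} f_{k,k}$ is exactly the commutative analogue of what the paper does in Lemma~\ref{lemma:moduloSId1} and in the proof of Proposition~\ref{prop:complexMotzkin}: replacing each homogeneous component $f_{k,k}$ by $\momentmap^{K-k} f_{k,k}$ modulo $\genSId{\momentmap-\mu}$ to pass to a single bidegree. In the commutative case this buys you more than in the non-commutative one, because pointwise nonnegativity of the resulting homogeneous polynomial $F$ on all of $\CC^{1+n}$ follows immediately from nonnegativity of $f$ on $\Levelset_\mu$ by radial scaling, whereas the analogous operator-positivity statement has no such one-line proof. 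Your derivation of $\supp_\CC \R_{0,\mu} = \genSId{\momentmap-\mu}$ from the vanishing of $F$ is clean and avoids any appeal to the real Nullstellensatz.

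For \eqref{eq:schmuedgen} your instinct to work on the quotient $\CC\PP^n$ rather than on $\CC^{1+n}$ with the constraint $\momentmap=\mu$ is the right one, and your diagnosis of the obstacle (averaging sums of squares of non-invariant polynomials over $\group{U}(1)$ does not obviously yield sums of squares of invariant polynomials) is accurate. One point you leave implicit and should make explicit in a full write-up: to invoke Schm\"udgen on $\CC\PP^n$ you need a concrete presentation of $\Polynomials(\CC\PP^n)$ as the real coordinate ring of a compact basic closed semialgebraic set in some $\RR^N$. The natural choice is the embedding $[w]\mapsto \big(w_i\cc{w_j}\big)_{i,j}$ into Hermitian $(1{+}n)\times(1{+}n)$ matrices, under which $\Polynomials(\CC\PP^n)$ is exactly the restriction of real polynomial functions and the image is cut out by finitely many polynomial equalities (rank-one projector conditions together with the trace condition), hence compact semialgebraic. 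With that in hand, Schm\"udgen gives a sum-of-squares certificate in $\RR[x_{ij}]$ modulo the vanishing ideal, which pulls back to $\big(\Polynomials_0(\CC^{1+n})^{\lie u_1}\big)^{++}_\Hermitian + \big(\genSId{\momentmap-\mu}\big)_\Hermitian$ as you claim.
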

Identity \eqref{eq:schmuedgen} is essentially Schmüdgen's Positivstellensatz applied to the special case of the compact real algebraic set $\CC\PP^n$.
Spelled out in detail, \eqref{eq:schmuedgen} says that every Hermitian element $f$ of $\Polynomials_0(\CC^{1+n})^{\lie u_1}$ that fulfils $\dupr{\omega}{f} > 0$ for all
$\mu$-eigenstates $\omega$ of $\momentmap$, or equivalently, $f(w)> 0$ for all points $w$ of the compact $\mu$-levelset of $\momentmap$,
can be expressed as
\begin{equation}
  f = \sum_{j=1}^\ell \cc{g_j} g_j + (\momentmap-\mu\Unit) h
\end{equation}
with suitable $\ell \in \NN_0$, $g_1, \dots, g_\ell \in \Polynomials_0(\CC^{1+n})^{\lie u_1}$ and $h \in (\Polynomials_0(\CC^{1+n})^{\lie u_1})_\Hermitian$.
It is noteworthy that this result is in some sense optimal: By \cite[Proposition~6.1]{scheiderer:sumsOfSquaresOfRegularFunctionsOnRealAlgebraicVarieties} there does not exist a denominator-free non-strict Positivstellensatz for $\CC\PP^n$ if $n \geq 2$, i.e.\ there is an element of $\R_{0,\mu}$ which cannot be expressed as a sum of Hermitian squares plus an element of the ideal generated by $\momentmap-\mu$ (with respect to the pointwise product).
An explicit example is:

\begin{proposition} \label{prop:complexMotzkin}
	For $n \geq 2$ and $\mu \in {]0,\infty[}$, the element
	\begin{equation}
		f \coloneqq -\frac 1 {16}\big( z_0^2 \cc z_1^2 - \cc z_0^2 z_1^2 \big)^2 
		\big(\abs {z_0}^2 \abs {z_1}^2 - 3 \abs {z_1}^4\big) + \abs {z_1}^{12} \in 
		\mathcal R_{0,\mu}
	\end{equation}
is not in $\big(\Polynomials_0(\CC^{1+n})^{\lie u_1} \big)^{++}_\Hermitian + \big( \genSId{\momentmap-\mu} \big)_\Hermitian$.
\end{proposition}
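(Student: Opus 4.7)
The claim has two parts, which I address separately.

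For $f \in \mathcal R_{0,\mu}$, I would show the stronger fact that $f \geq 0$ on all of $\CC^{1+n}$. Introduce the $\group{U}(1)$-invariant real quantities $u \coloneqq \RE(z_0 \cc z_1)$, $v \coloneqq \IM(z_0 \cc z_1)$, $p \coloneqq \abs{z_0}^2$ and $q \coloneqq \abs{z_1}^2$, which satisfy the polynomial identity $u^2 + v^2 = pq$. Using $(z_0 \cc z_1)^2 - (\cc z_0 z_1)^2 = 4\I u v$, a direct computation gives $-\frac{1}{16}(z_0^2 \cc z_1^2 - \cc z_0^2 z_1^2)^2 = u^2 v^2$, so substituting $pq = u^2 + v^2$ in the definition of $f$ yields
\[
f = u^2 v^2 (pq - 3 q^2) + q^6 = u^4 v^2 + u^2 v^4 - 3 u^2 v^2 q^2 + q^6 = M(u, v, q),
\]
the classical homogeneous Motzkin polynomial in three real variables. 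The AM-GM inequality applied to the non-negative reals $u^4 v^2, u^2 v^4, q^6$ gives $u^4 v^2 + u^2 v^4 + q^6 \geq 3 u^2 v^2 q^2$, hence $M(u,v,q) \geq 0$, so $f \geq 0$ on $\CC^{1+n}$ and in particular $f \in \mathcal R_{0,\mu}$.

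For the non-representability, I argue by contradiction: suppose $f = \sum_{j=1}^\ell \cc{g_j}\, g_j + (\momentmap - \mu) h$ for some $g_j, h \in \Polynomials_0(\CC^{1+n})^{\lie u_1}$. Since $f$ does not involve $z_3, \dots, z_n$, restricting the identity to the subvariety $\{z_3 = \dots = z_n = 0\}$ produces a representation of the same form with $\momentmap$ replaced by $\abs{z_0}^2 + \abs{z_1}^2 + \abs{z_2}^2$; it therefore suffices to treat $n = 2$. Decompose $g_j = \sum_k g_j^{(k)}$ and $h = \sum_k h^{(k)}$ into bihomogeneous components with $g_j^{(k)}, h^{(k)} \in \Polynomials^{k,k}(\CC^3)$. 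Comparing bidegree-$(m,m)$ parts of the identity for all $m$ (noting $f \in \Polynomials^{6,6}(\CC^3)$) and applying a standard truncation on the top bidegree (where $\sum_j \abs{g_j^{(K)}}^2$ must cancel against $\momentmap h^{(2K-1)} - \mu h^{(2K)}$ for $2K > 6$) reduces matters to $g_j$ with components in bidegrees $\leq (3,3)$ and $h$ in bidegrees $\leq (5,5)$.

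The heart of the proof is a Motzkin-style coefficient obstruction. The classical argument that $M(x,y,z) \notin \sum \RR[x,y,z]^2$ runs as follows: the vanishing of the coefficients of $M$ at $x^6, y^6, x^4 z^2, y^4 z^2, x^2 z^4, y^2 z^4$ forces every cubic $r_i$ in a hypothetical decomposition $M = \sum r_i^2$ to have zero coefficients at $x^3, y^3, x^2 z, y^2 z, xz^2, yz^2$; the contribution to the $x^2 y^2 z^2$-coefficient of $\sum r_i^2$ then collapses to $\sum \alpha_{xyz}^2 \geq 0$, contradicting the coefficient $-3$ in $M$. I would perform the analogous coefficient analysis on our identity in $\Polynomials_0(\CC^3)^{\lie u_1}$, testing against carefully chosen monomials in $z_0, \cc z_0, z_1, \cc z_1, z_2, \cc z_2$ while tracking the contribution of $(\momentmap - \mu) h$ at each. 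The main obstacle lies exactly in that ideal term: $\momentmap = \abs{z_0}^2 + \abs{z_1}^2 + \abs{z_2}^2$ couples the ``Motzkin sector'' (monomials in $z_0, \cc z_0, z_1, \cc z_1$ only, in which $f$ lives) to monomials containing $\abs{z_2}^2$, so the probe monomials must be chosen so that the ideal contribution either vanishes identically or is controlled by lower-order data already forced to be zero. The obstruction thereby survives, providing the explicit realisation, for our specific $f$, of the abstract obstacle described by Scheiderer's Proposition~6.1 for $\CC\PP^n$ with $n \geq 2$.
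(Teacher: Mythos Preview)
Your positivity argument is correct and actually a bit slicker than the paper's: they dehomogenize via $f(x+\I y,1,\dots,1)=x^2y^2(x^2+y^2-3)+1$ and invoke the known positivity of the classical Motzkin polynomial, whereas you identify $f$ directly with the homogeneous Motzkin form $M(u,v,q)$ and apply AM--GM.

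The non-representability part, however, has a real gap. Your ``standard truncation on the top bidegree'' does not go through. At the top bidegree $2K>6$ the equation reads $\sum_j\lvert g_j^{(K)}\rvert^2+\momentmap\,h^{(2K-1)}=0$; this tells you only that $\momentmap$ divides $\sum_j\lvert g_j^{(K)}\rvert^2$, not that the $g_j^{(K)}$ vanish, and there is no evident way to peel off $g_j^{(K)}$ and absorb the damage into a modified representation of strictly lower degree. So you cannot assume $\deg g_j\le 3$. The subsequent ``Motzkin-style coefficient obstruction'' is then only a plan: you correctly identify that the ideal term $(\momentmap-\mu)h$ is the whole difficulty, but you do not actually exhibit probe monomials for which its contribution is controlled, and without the degree bound on the $g_j$ this becomes an unbounded system.

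The paper sidesteps both issues with a single device: homogenization. Given $f-\sum_i \cc{g_i}g_i\in\genSId{\momentmap-\mu}$, replace each $g_i$ by $\hat g_i\coloneqq\sum_{K,L}g_{i,K,L}\,z^K\cc z^L(\momentmap/\mu)^{d-\abs K}\in\Polynomials^{d,d}$ for $d$ large; then $(\momentmap/\mu)^{2d-6}f-\sum_i\cc{\hat g_i}\hat g_i$ is homogeneous and lies in $\genSId{\momentmap-\mu}$, hence vanishes (the only homogeneous element of that ideal is $0$, since anything vanishing on one sphere vanishes on all of $\CC^{1+n}$ by scaling). Thus $\momentmap^{2d-6}f$ is a sum of Hermitian squares. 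Setting $z_2=1$, $z_3=\dots=z_n=0$ gives $(1+\abs{z_0}^2+\abs{z_1}^2)^{2d-6}f$ as a sum of squares, and taking lowest-order parts forces $f$ itself to be a sum of squares, contradicting the classical fact for the Motzkin polynomial. This homogenization kills the ideal contribution outright and replaces both your truncation and your coefficient analysis.
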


\begin{proof}
	Note that $f$ is a homogenized version of the Motzkin polynomial. Indeed,
	$f(x+\I y, 1, \dots, 1) = x^2 y^2(x^2+y^2-3)+1$ and if $w_1 \neq 0$, then $f(w_0, \dots, w_n) = \abs{w_1}^{12} f(\frac{w_0}{w_1}, 1, \frac{w_2}{w_1}, \dots, \frac{w_n}{w_1})$.
	Since the Motzkin polynomial is pointwise positive and not a sum of squares, this implies that
	$f \in \mathcal R_{0,\mu}$ but $f \notin \big(\Polynomials_0(\CC^{1+n})^{\lie u_1} \big)^{++}_\Hermitian$.
	
	Assume that $f - \sum_{i=1}^k g_i^2 \in \genSId{\momentmap - \mu}$ with 
	$g_i = \sum_{K,L \in \mathbb N_0^{1+n}, \abs{K} = \abs L} g_{i,K,L} z^K \cc z^L \in \Polynomials(\mathbb C^{1+n})^{\lie u_1}$.
	Let $d \in \mathbb N$, $d \geq 3$ be such that the total degree of each $g_i$, $1 \leq i \leq k$, is less than or equal to $2d$.
	Write $\hat g_i = \sum_{K,L \in \mathbb N_0^{1+n}, \abs K = \abs L} g_{i,K,L} z^K \cc z^L (\momentmap / \mu)^{d-\abs K} 
	\in \Polynomials(\mathbb C^{1+n})^{d,d}$ and note that $\hat g_i - g_i \in \genSId{\momentmap - \mu}$ 
	since $(\momentmap/\mu)^\ell - \Unit = (\momentmap/\mu - \Unit) \sum_{j=0}^{\ell-1} (\momentmap/\mu)^j \in \genSId{\momentmap-\mu}$
	holds for all $\ell \in \mathbb N_0$.
	Then $(\momentmap/\mu)^{2d - 6} f - \sum_{i=1}^k \hat g_{i}^2 \in \genSId{\momentmap-\mu}$.
	But $(\momentmap/\mu)^{2d - 6} f- \sum_{i=1}^k \hat g_{i}^2$ is homogeneous, and the only homogeneous element of $\genSId{\momentmap - \mu}$ is $0$,
	so $(\momentmap/\mu)^{2d - 6} f$ and also $\momentmap^{2d-6} f$ must be sums of squares.
	Setting $z_2 = 1$ and $z_3 = \dots = z_n = 0$, we obtain that $(1+\abs{z_0}^2+\abs{z_1}^2)^{2d - 6} f$ must also be a sum of squares, 
	and so must be the lowest order $f$,
	contradicting the first paragraph.
\end{proof}
Note that the situation is different if $n=1$, in which case it follows easily from \cite[Corollary 3.12]{scheiderer:sumsOfSquaresOnRealAlgebraicSurfaces} that every element of $\mathcal R_{0,\mu}$ can be written as a sum of Hermitian squares plus an element of the ideal generated by $\momentmap-\mu$.

One missing piece in the analogy between the cases $\hbar = 0$ and $\hbar > 0$ is to give an algebraic
description of the quadratic module $\R_{\hbar, \mu}$.
In contrast to Proposition~\ref{prop:complexMotzkin} we will even show:
\begin{theorem} \label{theorem:main}
  For all $\mu \in {[0,\infty[}$ and all $\hbar \in {]0,\infty[}$, the identity
  \begin{equation}
    \R_{\hbar,\mu}
    =
    \big( \Polynomials_\hbar(\CC^{1+n})^{\lie u_1}\big) ^{++}_\Hermitian + \big( \genSId{\momentmap-\mu} \big)_\Hermitian
    \label{eq:main}
  \end{equation}
  holds.
\end{theorem}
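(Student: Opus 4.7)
Plan.
The inclusion $\supseteq$ of~\eqref{eq:main} is immediate: sums of Hermitian squares with invariant square roots lie in $(\A^{\lie u_1})^+_\Hermitian \subseteq \R_{\hbar,\mu}$, and since $\momentmap - \mu\Unit$ is central in $\A^{\lie u_1}$ and annihilates $\mathcal E_\mu$, every Hermitian element of $\genSId{\momentmap-\mu}$ also annihilates $\mathcal E_\mu$ and hence belongs to $\R_{\hbar,\mu}$.

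For the non-trivial direction $\subseteq$, I would first invoke Corollary~\ref{corollary:neu}, which already gives $\R_{\hbar,\mu} = (\A^{\lie u_1})^+_\Hermitian + (\genSId{\momentmap-\mu})_\Hermitian$; the task thus reduces to upgrading the first summand to $(\A^{\lie u_1})^{++}_\Hermitian$ modulo the ideal. I would split along whether $\mu$ is quantized. In the quantized case $\mu = \hbar k_0$ with $k_0 \in \NN_0$, Corollary~\ref{corollary:neu} identifies the reduced algebra with the matrix $^*$\=/algebra $M_{d_{n,k_0}}(\CC)$, in which positive semidefinite elements are automatically sums of Hermitian squares. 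Given $f \in (\A^{\lie u_1})^+_\Hermitian$, I would decompose $\pi_\hbar(f)|_{\Polynomials^{k_0,0}} = \sum_i A_i^* A_i$ in $M_{d_{n,k_0}}(\CC)$, lift each $A_i$ to an invariant $g_i \in \A^{\lie u_1}$ using the surjectivity of the reduction, and note that $f - \sum_i g_i^* \star_\hbar g_i$ annihilates $\mathcal E_{\hbar k_0}$ and therefore lies in $(\Vanishing_{\hbar k_0})_\Hermitian$. The remaining technical claim is then that $(\Vanishing_{\hbar k_0})_\Hermitian \subseteq (\A^{\lie u_1})^{++}_\Hermitian + (\genSId{\momentmap-\hbar k_0})_\Hermitian$. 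In the non-quantized case $\mu \notin \hbar\NN_0$, where $\R_{\hbar,\mu} = (\A^{\lie u_1})_\Hermitian$ by Corollary~\ref{corollary:neu}, it suffices by the discussion in the introduction to produce $-\Unit \in (\A^{\lie u_1})^{++}_\Hermitian + (\genSId{\momentmap-\mu})_\Hermitian$.

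I would attempt both remaining tasks by deriving explicit sum-of-squares certificates from the canonical commutation relations $[\cc z_i, z_j]_{\star_\hbar} = \hbar\delta_{ij}\Unit$ and the centrality of $\momentmap$ in $\A^{\lie u_1}$. A representative identity, provable by direct Wick-product computation, is
\begin{equation*}
  \textstyle \sum_{i=0}^{n} (z_0\cc z_i)^* \star_\hbar (z_0 \cc z_i) \;=\; (z_0 \cc z_0 + \hbar\Unit) \star_\hbar \momentmap \,-\, \hbar\, z_0 \cc z_0\komma
\end{equation*}
which, reduced modulo $\genSId{\momentmap-\mu}$, expresses $(\mu-\hbar)\,z_0\cc z_0 + \mu\hbar\,\Unit$ as a sum of Hermitian squares with invariant roots. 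By varying the base index, summing, and multiplying through by central polynomials in $\momentmap$ (for example $\momentmap - \hbar k\Unit$ for various $k$), one obtains a richer family of such certificates from which I would hope to assemble the required decompositions.

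The hardest step is pushing these constructions far enough to cover all Hermitian elements of $\Vanishing_{\hbar k_0}$ (some of which are indefinite) in the quantized case, and the negative element $-\Unit$ in the non-quantized case. One alternative route would be to apply Schm\"udgen's strict Positivstellensatz for the Weyl algebra~\cite{schmuedgen:StrictPositivstellensatzForWeylAlgebra} to a strictly positive perturbation such as $f + \lambda(\momentmap-\mu)^{\star_\hbar 2} \star_\hbar P(\momentmap)$ with $P$ a polynomial chosen to dominate $f$ at infinity, and then use the $\deg_{\holomorphic-\cc{\holomorphic}}$\=/grading to decompose the resulting sum of squares equivariantly, converting the non-invariant square roots into invariant ones modulo $\genSId{\momentmap-\mu}$. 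Either way, the essential difficulty is the passage from positivity to sums of squares with invariant roots, a step that crucially exploits the non-commutativity of $\star_\hbar$ and has no analogue in the commutative setting (cf.~Proposition~\ref{prop:complexMotzkin}).
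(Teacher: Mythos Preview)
Your high-level strategy matches the paper's: the split into the non-quantized case (produce $-\Unit$ as a sum of Hermitian squares modulo the ideal) and the quantized case (reduce via the matrix $C^*$\=/algebra to showing $(\Vanishing_{\hbar k_0})_\Hermitian \subseteq (\A^{\lie u_1})^{++}_\Hermitian + (\genSId{\momentmap-\hbar k_0})_\Hermitian$) is exactly what the paper does (your matrix-algebra reduction is the content of Proposition~\ref{proposition:trivial}, and the remaining claim about $\Vanishing_{\hbar k_0}$ is handled via Propositions~\ref{proposition:kerR} and~\ref{proposition:kplus1homogeneous}). Your sample identity is also correct.

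The genuine gap is in the technical execution of both ``hardest steps'', and the paper fills it with a single idea that your proposal does not mention: averaging over the full group $\group{U}(1+n)$, not just $\group{U}(1)$. The point is that for any $g\in\A^{\lie u_1}$, the average $\int_{\group{U}(1+n)} (g^*\star_\hbar g)\racts u\,\D\nu(u)$ is a $\group{U}(1+n)$\=/invariant element of $\A^{\lie u_1}$, hence a polynomial in $\momentmap$ alone, and by Carath\'eodory's theorem this integral is already a \emph{finite} convex combination of $(g\racts u_j)^*\star_\hbar(g\racts u_j)$, so it lies in $(\A^{\lie u_1})^{++}_\Hermitian$. Modulo $\genSId{\momentmap-\mu}$ this average becomes a computable scalar $\dupr{\invstateSum}{g^*\star_\hbar g}$ times $\Unit$. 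For the non-quantized case one then only has to exhibit \emph{one} $g$ with $\dupr{\invstateSum}{g^*\star_\hbar g}<0$; the paper takes $g=z_0^k\cc z_1^k$ and evaluates the scalar explicitly as $\hbar^{2k}(k!)^2 n!\,(\mu/\hbar)_{\downarrow,k}(\mu/\hbar+n+k)_{\downarrow,k}/(2k+n)!$, whose sign is controlled by choosing $k$ just past $\mu/\hbar$. Your single quadratic identity, and its iterates obtained by multiplying by central polynomials in $\momentmap$, will only ever produce elements of the form $p(\momentmap)\star_\hbar z_0\cc z_0 + q(\momentmap)$ modulo the ideal, and it is not at all clear how to reach a strictly negative multiple of $\Unit$ this way without a mechanism that collapses the $z$\=/dependence; the $\group{U}(1+n)$\=/average is precisely that mechanism.

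The same averaging trick is what makes the quantized case go through. The paper first shows (Proposition~\ref{proposition:kerR}) that $\supp_\CC\R_{\hbar,\hbar k}$ is generated by $\momentmap-\hbar k$ together with $\Polynomials^{k+1,k+1}$, so your remaining claim reduces to placing each $z^K\cc z^L$ with $|K|=|L|=k+1$ into the support of $(\A^{\lie u_1})^{++}_\Hermitian+(\genSId{\momentmap-\hbar k})_\Hermitian$. The key Lemma~\ref{lemma:posSquares} does this for $f\cc f$ with $f$ holomorphic of degree $k+1$: one averages $\cc z_0^{k+1}\star_\hbar z_0^{k+1}$ over $\group{U}(1+n)$ to obtain a polynomial in $\momentmap$, sandwiches by $f\star_\hbar(\argument)\star_\hbar\cc f$, and then reduces the $\momentmap$\=/powers modulo $\genSId{\momentmap-\hbar k}$ via Lemma~\ref{lemma:moduloSId1}; the resulting coefficient in front of $f\cc f$ is a strictly positive combinatorial constant. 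From $f\cc f$ one gets $\pm z^K\cc z^L$ and $\pm\I z^K\cc z^L$ by polarization, which finishes the argument. Your identity-by-identity approach, or the alternative via Schm\"udgen's Weyl-algebra Positivstellensatz with an equivariant decomposition, would have to reproduce this systematically, and neither sketch indicates how to achieve the needed cancellation of all non-scalar terms.
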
 
This means that every Hermitian element $f$ of $\Polynomials_\hbar(\CC^{1+n})^{\lie u_1}$ that fulfils $\dupr{\omega}{f} \ge 0$ for all
$\mu$-eigenstates $\omega$ of $\momentmap$, or equivalently, $\skalhbar{\psi}{\pi_\hbar(f)(\psi)} \ge 0$ for all $\mu$-eigenvectors $\psi$ of $\pi_\hbar(\momentmap)$,
can be expressed as
\begin{equation}
  f = \sum_{j=1}^\ell \cc{g_j} \star_\hbar g_j + (\momentmap-\mu\Unit) \star_\hbar h
\end{equation}
with suitable $\ell \in \NN_0$, $g_1, \dots, g_\ell \in \Polynomials_\hbar(\CC^{1+n})^{\lie u_1}$ and $h \in (\Polynomials_\hbar(\CC^{1+n})^{\lie u_1})_\Hermitian$.
In contrast to the ``strict'' Positivstellensatz \eqref{eq:schmuedgen} that one obtains for $\hbar = 0$,
the non-commutative Positivstellensatz of Theorem~\ref{theorem:main} is even a ``non-strict'' one, 
i.e.~a Nichtnegativstellensatz. Theorem~\ref{theorem:main} thus is stronger than the description of $\R_{\hbar,\mu}$
that one could obtain by applying the techniques of \cite{schmuedgen:StrictPositivstellensatzForEnvelopingAlgebras} for $\lie{su}_{1+n}$.

While the inclusion ``$\supseteq$'' in \eqref{eq:main} clearly is fulfilled,
see e.g.~Corollary~\ref{corollary:neu}, the converse inclusion ``$\subseteq$'' will be proven in the next Section~\ref{sec:proof}.
Before doing so, it might be worthwhile pointing out that Theorem~\ref{theorem:main} is not just a trivial consequence of the simple fact that the reduced algebra
$\Polynomials_\hbar(\CC^{1+n})_{\mu\mred}$ is a finite dimensional $C^*$\=/algebra in which all positive Hermitian elements have a square root. This observation
only yields:
\begin{proposition} \label{proposition:trivial}
  For all $\hbar \in {]0,\infty[}$ and all $\mu \in {[0,\infty[}$, the identity
  \begin{equation}
    \R_{\hbar,\mu}
    =
    \big( \Polynomials_\hbar(\CC^{1+n})^{\lie u_1}\big) ^{++}_\Hermitian + \supp \R_{\hbar,\mu}
  \end{equation}
  holds.
\end{proposition}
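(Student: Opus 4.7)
The inclusion $\R_{\hbar,\mu} \supseteq (\Polynomials_\hbar(\CC^{1+n})^{\lie u_1})^{++}_\Hermitian + (\genSId{\momentmap-\mu})_\Hermitian$ is immediate from Corollary~\ref{corollary:neu}, so only the converse needs proof, and by Proposition~\ref{proposition:trivial} this reduces to showing $\supp \R_{\hbar,\mu} \subseteq (\Polynomials_\hbar(\CC^{1+n})^{\lie u_1})^{++}_\Hermitian + (\genSId{\momentmap-\mu})_\Hermitian$. I would split into two cases according to whether $\mu$ lies in the spectrum $\set{\hbar k}{k\in\NN_0}$ of $\pi_\hbar(\momentmap)$.

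In the off-spectrum case $\mu \notin \set{\hbar k}{k\in\NN_0}$ the eigenspace $\mathcal E_\mu$ is trivial, so $\supp \R_{\hbar,\mu}$ is all of $(\Polynomials_\hbar(\CC^{1+n})^{\lie u_1})_\Hermitian$, and the claim is then equivalent to $\Unit \in \genSId{\momentmap-\mu}$. I would establish this by exploiting the identity $(\momentmap-\mu) \star_\hbar f = (\momentmap + j\hbar - \mu) f$ for $f \in \Polynomials^{j,j}(\CC^{1+n})$, which produces a systematic supply of ideal elements. Using that for $n \ge 1$ there are invariants beyond $\CC[\momentmap]$ (so in particular the rank-one Wick commutation $z_i \cc z_j \star_\hbar z_k \cc z_l - z_i \cc z_l \star_\hbar z_k \cc z_j = \hbar(\delta_{jk} - \delta_{lk}) \,(\text{linear})$ can be invoked), one obtains enough relations that a $\CC$-linear combination of ideal elements equals $\Unit$ whenever no $j\hbar$ matches $\mu$.

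For $\mu = \hbar k$ with $k \in \NN_0$, take Hermitian $r \in \supp \R_{\hbar,\hbar k}$, so $\pi_\hbar(r)|_{\Polynomials^{k,0}(\CC^{1+n})} = 0$. My strategy is to shift $r$ by an ideal element to make it strictly positive as an operator on the entire Fock space and then invoke the strict Positivstellensatz for the Weyl algebra~\cite{schmuedgen:StrictPositivstellensatzForWeylAlgebra}. Concretely, for suitably large $N \in \NN$ and a Hermitian invariant polynomial $h$, I would arrange $\tilde r \coloneqq r + (\momentmap-\hbar k)^{2N} \star_\hbar h + \varepsilon\Unit$ to satisfy $\pi_\hbar(\tilde r) \ge (\varepsilon/2)\Unit$ on all of $\PolyHol(\CC^{1+n})$: on $\Polynomials^{k,0}(\CC^{1+n})$ both the first two correction terms vanish, while on $\Polynomials^{k',0}(\CC^{1+n})$ for $k' \neq k$ the spectral gap $\hbar^{2N}|k'-k|^{2N}$ of $(\momentmap-\hbar k)^{2N}$ eventually dominates the polynomial-in-$k'$ norm growth of $\pi_\hbar(r)|_{\Polynomials^{k',0}}$ once $2N$ exceeds the degree of $r$, with $H_{k'} \coloneqq \pi_\hbar(h)|_{\Polynomials^{k',0}}$ chosen to compensate $\pi_\hbar(r)$ plus a margin on each eigenspace. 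Schmüdgen's theorem then yields $\tilde r = \sum_j g_j^* \star_\hbar g_j$ with $g_j \in \Polynomials_\hbar(\CC^{1+n})$ not necessarily invariant.

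The main obstacle will be upgrading this to a sum of squares of $\group{U}(1)$-invariants modulo the ideal. I would decompose $g_j = \sum_s g_j^{(s)}$ into $\group{U}(1)$-weight components; $\group{U}(1)$-invariance of $\tilde r$ combined with averaging over $\group{U}(1)$ kills the cross-weight terms and leaves $\tilde r = \sum_{j,s} (g_j^{(s)})^* \star_\hbar g_j^{(s)}$. The $s=0$ contributions are already squares of invariants, while each weight-$s$ square with $s \neq 0$ must be reworked modulo $\genSId{\momentmap-\hbar k}$ into a sum of squares of invariants. I expect to achieve this by induction on $|s|$, using the raising/lowering commutation $\cc z_i \star_\hbar z_i - z_i \star_\hbar \cc z_i = \hbar$ and the quotient relation $\momentmap \equiv \hbar k$ to trade weight magnitude for lower-weight pieces plus ideal elements. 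Once this step is complete, subtracting the shift $(\momentmap-\hbar k)^{2N} \star_\hbar h + \varepsilon\Unit$ from $\tilde r$ reconstitutes the desired decomposition of $r$.
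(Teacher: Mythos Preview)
You are proving the wrong statement. The proposition asks you to show
\[
  \R_{\hbar,\mu} = \big( \Polynomials_\hbar(\CC^{1+n})^{\lie u_1}\big)^{++}_\Hermitian + \supp \R_{\hbar,\mu},
\]
with $\supp \R_{\hbar,\mu}$ on the right, not $(\genSId{\momentmap-\mu})_\Hermitian$. Your entire write-up addresses Theorem~\ref{theorem:main} instead, and in doing so you explicitly invoke ``Proposition~\ref{proposition:trivial}'' as an ingredient---the very statement under consideration---so the argument is circular on its face.

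The paper's actual proof of this proposition is a two-line observation that you have bypassed: the quotient $\Polynomials_\hbar(\CC^{1+n})^{\lie u_1}/\supp_\CC \R_{\hbar,\mu} = \Polynomials_\hbar(\CC^{1+n})_{\mu\mred}$ is a finite-dimensional $C^*$\=/algebra (Corollary~\ref{corollary:neu}), so any $[f]_\mu$ with $f\in\R_{\hbar,\mu}$ has a square root there, i.e.\ $[f]_\mu = [g]_\mu^*[g]_\mu$ for some $g$, and then $f - g^*\star_\hbar g \in \supp \R_{\hbar,\mu}$. That is all.

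Even viewed as an attempt at Theorem~\ref{theorem:main}, your sketch contains errors. In the off-spectrum case you assert that the claim ``is equivalent to $\Unit \in \genSId{\momentmap-\mu}$''. This is false: the quotient $\Polynomials_\hbar(\CC^{1+n})^{\lie u_1}/\genSId{\momentmap-\mu}$ is generically nonzero (cf.\ Lemma~\ref{lemma:cpnProduct}), so $\Unit$ is \emph{not} in the ideal; what is actually needed (and what the paper proves in Proposition~\ref{proposition:minusOne}) is that $-\Unit$ lies in $(\,\cdot\,)^{++}_\Hermitian + (\genSId{\momentmap-\mu})_\Hermitian$, which is strictly weaker. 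In the on-spectrum case your plan ends by subtracting $\varepsilon\Unit$ back off, leaving an unexplained $-\varepsilon\Unit$ that is neither a sum of Hermitian squares nor in the ideal, so the decomposition does not close. The paper's route to Theorem~\ref{theorem:main} avoids Schm\"udgen's Weyl-algebra Positivstellensatz entirely and instead uses the $\group{U}(1+n)$-averaging machinery of Section~\ref{sec:proof}.
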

\begin{proof}
  Recall that $[\argument]_\mu \colon \Polynomials_\hbar(\CC^{1+n})^{\lie u_1} \to \Polynomials_\hbar(\CC^{1+n})_{\mu\mred} = \Polynomials_\hbar(\CC^{1+n})^{\lie u_1} / \supp_\CC \R_{\hbar,\mu}$
  is the canonical projection onto the quotient. Given $f \in \R_{\hbar,\mu}$, then $[f]_\mu$ is a positive Hermitian element of
  the finite dimensional $C^*$\=/algebra $\Polynomials_\hbar(\CC^{1+n})_{\mu\mred}$, and therefore there exists $[g]_\mu \in \Polynomials_\hbar(\CC^{1+n})_{\mu\mred}$
  with representative $g\in \Polynomials_\hbar(\CC^{1+n})^{\lie u_1}$ such that $[f]_\mu = [g^* \star_\hbar g]_\mu$. As a consequence,
  $f= g^* \star_\hbar g + h$ with $h \coloneqq f- g^*\star_\hbar g \in \supp \R_{\hbar,\mu}$.
\end{proof}
Note that $\supp_\CC \R_{\hbar,\mu} \neq \genSId{\momentmap-\mu}$ for $\hbar \neq 0$, see also the 
discussion in Section~\ref{sec:application}.
The relation between the $^*$\=/ideals $\supp_\CC \R_{\hbar,\mu}$ and $\genSId{\momentmap-\mu}$ can 
be made more explicit.
This requires the following Lemma and the definition of the falling factorial
\begin{equation}
  \falling{x}{k} \coloneqq \prod_{j=0}^{k-1}(x-j)
  \label{eq:fallingFactorialDef}
\end{equation}
for $x\in \RR$ and $k\in \NN_0$.
\begin{lemma} \label{lemma:moduloSId1}
	We have $\momentmap^k g - \hbar^k \falling{\frac \mu \hbar - \ell}{k} g  \in \genSId{\momentmap-\mu}$
	for all $\hbar \in {]0,\infty[}$, $\mu \in \RR$, $k, \ell \in \NN_0$ and $g\in \Polynomials^{\ell,\ell}_\hbar(\CC^{1+n})$.
	Note that here, like always, juxtaposition and exponentiation $\argument^k$ as in $\momentmap^k g$ refer to pointwise multiplication,
	but $\genSId{\momentmap-\mu}$ denotes the generated $^*$\=/ideal with respect to the product $\star_\hbar$.
\end{lemma}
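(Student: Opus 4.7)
The plan is to reduce the claim to the single base case $k=1$ by induction on $k$, since the statement for $k=0$ is trivial (empty product). The key computational input is a formula comparing pointwise multiplication by $\momentmap$ with $\star_\hbar$-multiplication by $\momentmap$ on elements of $\Polynomials^{m,m}(\CC^{1+n})$.

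First, I compute $\momentmap \star_\hbar h$ for $h \in \Polynomials^{m,m}(\CC^{1+n})$ using the explicit formula \eqref{eq:def:wickproduct}. Because $\momentmap = \sum_j z_j \cc{z}_j$ has $\partial \momentmap / \partial \cc{z}_i = z_i$ and all second and higher antiholomorphic derivatives vanish, only the terms $t = 0$ and $t=1$ in the Wick sum survive, giving
\begin{equation*}
  \momentmap \star_\hbar h \;=\; \momentmap h + \hbar \sum_{i=0}^n z_i \frac{\partial h}{\partial z_i} \;=\; \momentmap h + \hbar\, m\, h\komma
\end{equation*}
where I used Euler's identity for holomorphically $m$-homogeneous $h$. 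Rearranging, $\momentmap h - (\mu - \hbar m) h = \momentmap \star_\hbar h - \mu h - \hbar m h = (\momentmap-\mu\Unit) \star_\hbar h$, which lies in $\genSId{\momentmap-\mu}$. Hence, pointwise multiplication by $\momentmap$ agrees with multiplication by the scalar $\mu - \hbar m = \hbar(\mu/\hbar - m)$ modulo the $\star_\hbar$-ideal $\genSId{\momentmap-\mu}$.

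For the induction step, note that for $g \in \Polynomials^{\ell,\ell}(\CC^{1+n})$ the pointwise power $\momentmap^{k-1} g$ lies in $\Polynomials^{\ell+k-1,\ell+k-1}(\CC^{1+n})$, so applying the base case with $m = \ell + k - 1$ yields
\begin{equation*}
  \momentmap^k g \;=\; \momentmap \cdot (\momentmap^{k-1} g) \;\equiv\; \hbar\bigl(\tfrac{\mu}{\hbar} - \ell - k + 1\bigr)\, \momentmap^{k-1} g \pmod{\genSId{\momentmap-\mu}}\punkt
\end{equation*}
Chaining this congruence $k$ times and using the definition \eqref{eq:fallingFactorialDef} of the falling factorial produces the claimed constant $\hbar^k \falling{\mu/\hbar - \ell}{k}$. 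The only subtlety worth watching is that the congruences are taken modulo a $\star_\hbar$-ideal while the exponentiation is pointwise; but multiplying a representative of $\genSId{\momentmap-\mu}$ by a complex scalar stays inside the ideal, so combining the congruences $\momentmap^{k-1} g \equiv \hbar^{k-1} \falling{\mu/\hbar-\ell}{k-1} g$ and $\momentmap \cdot (\momentmap^{k-1} g) \equiv \hbar(\mu/\hbar - \ell - k + 1) \momentmap^{k-1} g$ is harmless. No real obstacle arises; the entire argument is a short two-step induction once the base-case identity is recorded.
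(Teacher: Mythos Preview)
Your proof is correct and follows essentially the same inductive route as the paper, which records the identity $(\momentmap - \mu) \star_\hbar (\momentmap^{k-1} g) = \momentmap^k g - \hbar\bigl(\tfrac{\mu}{\hbar} - \ell - (k-1)\bigr)\momentmap^{k-1} g$ and iterates. There is a harmless arithmetic slip in your displayed rearrangement (the middle expression should be $\momentmap \star_\hbar h - \mu h$, without the extra $-\hbar m h$), but the final equality $\momentmap h - (\mu - \hbar m) h = (\momentmap - \mu\Unit) \star_\hbar h$ is correct.
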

\begin{proof}
	This can be easily proven by induction over $k$, using that 
	\begin{equation*}
		(\momentmap - \mu) \star_\hbar (\momentmap^{k-1} g) 
		=
		\momentmap^k g + \hbar (k-1+\ell) \momentmap^{k-1} g - \mu \momentmap^{k-1} g
		=
		\momentmap^k g - \hbar \Big(\frac \mu \hbar - \ell - (k - 1)\Big) \momentmap^{k-1} g \punkt
	\end{equation*}
\end{proof}
\begin{proposition} \label{proposition:kerR}
  For all $\hbar \in {]0,\infty[}$ and all $k \in \NN_0$, the $^*$\=/ideal $\supp_\CC \R_{\hbar,\hbar k}$ of $\Polynomials_\hbar(\CC^{1+n})^{\lie u_1}$
  is generated by the union $\{\momentmap-\hbar k\} \cup \Polynomials^{k+1,k+1}(\CC^{1+n})$.
\end{proposition}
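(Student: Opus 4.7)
The plan is to establish mutual inclusion between $\supp_\CC \R_{\hbar,\hbar k}$ and the $^*$\=/ideal $\mathcal J$ of $\Polynomials_\hbar(\CC^{1+n})^{\lie u_1}$ generated (with respect to $\star_\hbar$) by $\{\momentmap - \hbar k \Unit\} \cup \Polynomials^{k+1,k+1}(\CC^{1+n})$. The inclusion ``$\subseteq$'' of the theorem follows from a dimension count once one has it; the inclusion ``$\supseteq$'' is where the actual checking happens.

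First I would verify $\mathcal J \subseteq \supp_\CC \R_{\hbar,\hbar k}$. The generator $\momentmap - \hbar k \Unit$ lies in $\Vanishing_{\hbar k} = \supp_\CC \R_{\hbar,\hbar k}$ directly by definition. For a generator $h = z^K \cc z^L$ with $\abs K = \abs L = k+1$, the operator $\pi_\hbar(h) = z^K \hbar^{k+1} \partial^{k+1}/\partial z^L$ involves $k+1$ holomorphic derivatives, hence annihilates the $\hbar k$-eigenspace $\mathcal E_{\hbar k} = \Polynomials^{k,0}(\CC^{1+n})$. Via the vector states $\chi_\psi$ appearing in the proof of Proposition~\ref{proposition:Rmucharacterization}, this places $h$ in $\supp_\CC \R_{\hbar,\hbar k}$.

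For the reverse inclusion I would rely on Corollary~\ref{corollary:neu}, which gives $\dim \big(\Polynomials_\hbar(\CC^{1+n})^{\lie u_1}/\supp_\CC \R_{\hbar,\hbar k}\big) = \dim \Polynomials^{k,k}(\CC^{1+n}) = d_{n,k}^2$. It thus suffices to show that $\Polynomials_\hbar(\CC^{1+n})^{\lie u_1} = \Polynomials^{k,k}(\CC^{1+n}) + \mathcal J$, since this forces $\dim\big(\Polynomials_\hbar(\CC^{1+n})^{\lie u_1}/\mathcal J\big) \le d_{n,k}^2$ and the inclusion $\mathcal J \subseteq \supp_\CC \R_{\hbar,\hbar k}$ then makes it an equality. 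Using the bigraded decomposition $\Polynomials_\hbar(\CC^{1+n})^{\lie u_1} = \bigoplus_{\ell \in \NN_0} \Polynomials^{\ell,\ell}(\CC^{1+n})$ from \eqref{eq:WeylESdecomp}, I would treat the homogeneous components in two cases. For $\ell \ge k+1$, I would show $\Polynomials^{\ell,\ell}(\CC^{1+n}) \subseteq \mathcal J$: the key observation is that the defining sum \eqref{eq:def:wickproduct} collapses to its $K=0$ term when the left factor is holomorphic, so $p \star_\hbar h = p \cdot h$ pointwise for holomorphic $p$, and analogously $h \star_\hbar \cc q = h \cdot \cc q$ for antiholomorphic $\cc q$. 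Therefore every monomial $z^K \cc z^L$ with $\abs K = \abs L = \ell \ge k+1$ factors pointwise as $z^{K'} \cdot (z^{K''} \cc z^{L''}) \cdot \cc z^{L'}$ with $\abs{K''} = \abs{L''} = k+1$, and this factorization equals $z^{K'} \star_\hbar (z^{K''} \cc z^{L''}) \star_\hbar \cc z^{L'} \in \mathcal J$. For $\ell < k$, I would apply Lemma~\ref{lemma:moduloSId1} with $\mu = \hbar k$ and with the lemma's exponent replaced by $k-\ell$, obtaining $\momentmap^{k-\ell} g - \hbar^{k-\ell} (k-\ell)! \, g \in \genSId{\momentmap - \hbar k \Unit} \subseteq \mathcal J$ for $g \in \Polynomials^{\ell,\ell}(\CC^{1+n})$; here the falling factorial $\falling{k-\ell}{k-\ell} = (k-\ell)!$ is nonzero, which is essential for solving for $g$. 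Since $\momentmap^{k-\ell} g \in \Polynomials^{k,k}(\CC^{1+n})$, this represents $g$ modulo $\mathcal J$ by an element of $\Polynomials^{k,k}(\CC^{1+n})$.

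The main obstacle is the case $\ell \ge k+1$: one has to recognize that holomorphic left $\star_\hbar$-multiplication and antiholomorphic right $\star_\hbar$-multiplication both coincide with pointwise multiplication, because otherwise a factorization inside a $\star_\hbar$-ideal would generate extra lower-degree ``correction'' terms that would have to be controlled separately. With that observation in hand, the two cases together produce the desired decomposition and the dimension count closes the argument.
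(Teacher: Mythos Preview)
Your overall structure (mutual inclusion plus a dimension count via Corollary~\ref{corollary:neu}) is sound, and your treatment of the cases $\ell \le k$ matches the paper's use of Lemma~\ref{lemma:moduloSId1}. There is, however, a genuine gap in the case $\ell \ge k+2$.

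Your factorization $z^K \cc z^L = z^{K'} \star_\hbar (z^{K''}\cc z^{L''}) \star_\hbar \cc z^{L'}$ is correct as an identity in $\Polynomials_\hbar(\CC^{1+n})$, but it does \emph{not} establish membership in $\mathcal J$. The ideal $\mathcal J$ is a $^*$\=/ideal of the \emph{subalgebra} $\Polynomials_\hbar(\CC^{1+n})^{\lie u_1}$, so closure under left and right $\star_\hbar$-multiplication is only guaranteed for multipliers lying in $\Polynomials_\hbar(\CC^{1+n})^{\lie u_1}$. Your multipliers $z^{K'} \in \Polynomials^{\ell-k-1,0}(\CC^{1+n})$ and $\cc z^{L'} \in \Polynomials^{0,\ell-k-1}(\CC^{1+n})$ are not invariant unless $\ell = k+1$. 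So the very observation you flag as the ``main obstacle'' --- that holomorphic left and antiholomorphic right $\star_\hbar$-multiplication agree with the pointwise product --- is useless here, because those multiplications take you outside the algebra in which $\mathcal J$ is an ideal.

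The paper repairs exactly this point: it uses the \emph{invariant} factors $z^M \cc z^{M'} \in \Polynomials^{k+1,k+1}(\CC^{1+n})$ and $z^N \cc z^{N'} \in \Polynomials^{\ell-k-1,\ell-k-1}(\CC^{1+n})$, accepts that $z^L \cc z^{L'} - z^M \cc z^{M'} \star_\hbar z^N \cc z^{N'}$ produces lower-order correction terms, and observes via Proposition~\ref{proposition:starhbarbasics} that these corrections still lie in $\bigoplus_{r \ge k+1} \Polynomials^{r,r}(\CC^{1+n})$. A downward induction on degree then shows $\Polynomials^{\ell,\ell}(\CC^{1+n}) \subseteq \mathcal J$ for all $\ell \ge k+1$. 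With this fix your dimension-count argument goes through; the paper instead argues directly that any $f \in \supp_\CC \R_{\hbar,\hbar k}$ decomposes into pieces in $\mathcal J$, but either route closes once the $\ell \ge k+1$ case is handled with invariant multipliers.
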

\begin{proof}
  From Equation~\eqref{eq:Rmu} in Corollary~\ref{corollary:neu} it follows immediately that $\pm (\momentmap-\hbar k \Unit) \in \R_{\hbar,\hbar k}$,
  and also that $\pm f \in \R_{\hbar,\hbar k}$ for every $f\in \Polynomials^{k+1,k+1}(\CC^{1+n})$ because $\pi_\hbar(f)(g) = 0$
  for all $g\in \Polynomials^{k,0}(\CC^{1+n})$.
  
  Conversely, let $f = \sum_{\ell = 0}^\infty f_\ell \in \supp_\CC \R_{\hbar,\hbar k}$ with homogeneous components $f_\ell \in \Polynomials^{\ell,\ell}(\CC^{1+n})$
  be given. We show that $\sum_{\ell=k+1}^\infty f_\ell$ is in the $^*$\=/ideal generated by $\Polynomials^{k+1,k+1}(\CC^{1+n})$
  and that $\sum_{\ell = 0}^k f_\ell$ lies in the $^*$\=/ideal generated by $\genSId{\momentmap-\hbar k}$:
  
  For $L,L' \in \NN_0^{1+n}$ with $\ell \coloneqq \abs{L} = \abs{L'} > k$ there are $M,M',N,N' \in \NN_0^{1+n}$ with $\abs{M} = \abs{M'} = k+1$ such that 
  $L = M+N$ and $L'=M'+N'$. Using Proposition~\ref{proposition:starhbarbasics} one easily checks that this way,
  $z^{L} \cc{z}^{L'} - z^M \cc{z}^{M'} \star_\hbar z^N \cc{z}^{N'} \in \bigoplus_{r=\ell - \min\{k+1,\ell-(k+1)\}}^{\ell-1} \Polynomials^{r,r}(\CC^{1+n})$.
  Note that $\ell - \min\{k+1,\ell-(k+1)\} \ge k+1$.
  Starting with the highest non-vanishing component one can thus show that $\sum_{\ell=k+1}^\infty f_\ell$ is an element of the
  $^*$\=/ideal of $\Polynomials_\hbar(\CC^{1+n})^{\lie u_1}$ that is generated by $\Polynomials^{k+1,k+1}(\CC^{1+n})$. Now define
  \begin{align*}
    g &\coloneqq \sum_{\ell = 0}^k \big( \hbar^{k-\ell} (k-\ell)! \big)^{-1} \momentmap^{k-\ell} f_\ell \in \Polynomials^{k,k}(\CC^{1+n})
    \komma
  \end{align*}
  then $h \coloneqq g-\sum_{\ell = 0}^k f_\ell \in \genSId{\momentmap-\hbar k}$ by Lemma~\ref{lemma:moduloSId1}.
  Furthermore, as $g = f+ h - \sum_{\ell = k+1}^\infty f_\ell$ with $h - \sum_{\ell = k+1}^\infty f_\ell = g-f \in \supp_\CC \R_{\hbar,\hbar k}$,
  $g$ is the unique element of $\Polynomials^{k,k}(\CC^{1+n})$ that coincides with $f$ modulo $\supp_\CC \R_{\hbar,\hbar k}$,
  see Corollary~\ref{corollary:neu}. But this means that $g=0$ because $f \in \supp_\CC \R_{\hbar,\hbar k}$, so $\sum_{\ell = 0}^k f_\ell = - h \in \genSId{\momentmap-\hbar k}$.
\end{proof}
For $\mu \notin \set{\hbar k}{k\in \NN_0}$, of course, $\supp_\CC \R_{\hbar,\mu} = \Polynomials_{\hbar}(\CC^{1+n})^{\lie u_1}$
is the $^*$\=/ideal generated by $\Unit$, or equivalently, by $\{\momentmap-\hbar k\} \cup \Polynomials^{0,0}(\CC^{1+n})$.
\section{Proof of the Main Theorem} \label{sec:proof}
In order to construct representations of positive Hermitian elements as sums of Hermitian squares, 
certain invariant functionals that one obtains by averaging over the $\group{U}(1+n)$-action will be helpful:

\begin{definition}
  For all $k\in \NN_0$, the linear functional $\invstate[k] \colon \Polynomials^{k,k}(\CC^{1+n}) \to \CC$ is defined as the one that fulfils
  \begin{equation}
    \dupr{\invstate[k]}{z^K \cc{z}^L} \coloneqq \delta_{K,L} \frac{K! n!}{(k+n)!}
  \end{equation}
  for all $K,L\in \NN^{1+n}_0$ with $\abs{K} = \abs{L} = k$.
\end{definition}
The crucial properties of these functionals are:
\begin{proposition}
  For all $k\in \NN_0$, the linear functional $\invstate[k]$ on $\Polynomials^{k,k}(\CC^{1+n})$ fulfils
  \begin{align}
    \dupr{\invstate[k]}{\momentmap^k} &= 1
  \shortintertext{and it is $\group{U}(1+n)$-invariant, i.e.}
    \dupr{\invstate[k]}{f\racts u} &= \dupr{\invstate[k]}{f}
  \end{align}
  holds for all $f\in \Polynomials^{k,k}(\CC^{1+n})$ and all $u \in \group{U}(1+n)$.
\end{proposition}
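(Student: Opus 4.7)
The plan is to verify both claims by identifying $\invstate[k]$ with a natural $\group{U}(1+n)$-invariant functional on $\Polynomials^{k,k}(\CC^{1+n})$.

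The first identity follows from a short combinatorial computation. The multinomial expansion yields $\momentmap^k = \sum_{\abs{K}=k} \frac{k!}{K!} z^K \cc{z}^K$; plugging in the defining values of $\invstate[k]$ and using that there are $\binom{k+n}{n}$ multiindices $K \in \NN_0^{1+n}$ with $\abs{K}=k$, one obtains
\[
\dupr{\invstate[k]}{\momentmap^k} = \sum_{\abs{K}=k} \frac{k!}{K!} \cdot \frac{K! n!}{(k+n)!} = \frac{k! n!}{(k+n)!} \binom{k+n}{n} = 1.
\]

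For the $\group{U}(1+n)$-invariance, my key step is to identify $\invstate[k]$ with integration over the unit sphere $S^{2n+1} \subset \CC^{1+n}$ against the normalized $\group{U}(1+n)$-invariant probability measure $\sigma$. This reduces to the classical moment formula
\[
\int_{S^{2n+1}} w^K \cc{w}^L \,\D\sigma(w) = \delta_{K,L} \frac{K! n!}{(k+n)!} \qquad (\abs{K}=\abs{L}=k),
\]
which I would derive from the Gaussian identity $\frac{1}{\pi^{1+n}}\int_{\CC^{1+n}} z^K \cc{z}^L \E^{-\momentmap} \,\D^{1+n} z \,\D^{1+n} \cc{z} = \delta_{K,L} K!$ by switching to polar coordinates $z = r\omega$ with $r \geq 0$, $\omega \in S^{2n+1}$, combining the radial integral $\int_0^\infty r^{2n+1+2k} \E^{-r^2} \,\D r = \frac{(n+k)!}{2}$ with the total surface area $\frac{2\pi^{n+1}}{n!}$ of $S^{2n+1}$. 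Matching monomial values then yields $\dupr{\invstate[k]}{f} = \int_{S^{2n+1}} f \,\D\sigma$ on $\Polynomials^{k,k}(\CC^{1+n})$, and $\group{U}(1+n)$-invariance is now just change of variables together with $\group{U}(1+n)$-invariance of both the sphere and its surface measure.

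No conceptual obstacle is expected: the only nontrivial piece is the polar-coordinate evaluation of the Gaussian integral, which is routine. As a purely algebraic alternative for the invariance, one may verify directly on monomials (using $\pi_1(z^K \cc{z}^L)(z^M) = \delta_{L,M} L!\, z^K$ when $\abs{M}=\abs{L}$) that $\invstate[k] = \frac{n!}{(k+n)!}\,\mathrm{tr}\bigl(\pi_1(\argument)|_{\Polynomials^{k,0}(\CC^{1+n})}\bigr)$, after which invariance follows at once from the intertwining relation $\pi_1(f \racts u) = \rho(u)\, \pi_1(f)\, \rho(u)^{-1}$ (with $\rho$ the unitary $\group{U}(1+n)$-action on $\PolyHol(\CC^{1+n})$) together with cyclicity of the trace.
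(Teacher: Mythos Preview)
Your proposal is correct. The normalization $\dupr{\invstate[k]}{\momentmap^k}=1$ is handled exactly as in the paper, via the multinomial expansion and the count of multiindices.

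For the $\group{U}(1+n)$-invariance, your two approaches differ in their relation to the paper. Your \emph{alternative} --- identifying $\invstate[k]$ with $\tfrac{n!}{(k+n)!}$ times the trace of $\pi_1(\argument)$ on $\Polynomials^{k,0}(\CC^{1+n})$ and invoking the intertwining relation plus cyclicity --- is essentially the paper's argument in disguise: the paper works directly with the unitary matrix $(\rho(u)_{K,M})$ of the $\group{U}(1+n)$-action in the orthonormal basis $(K!)^{-1/2}z^K$, expands $(z^K\cc{z}^L)\racts u$, applies the definition of $\invstate[k]$, and uses $\sum_M \rho(u)_{K,M}\,\overline{\rho(u)_{L,M}}=\delta_{K,L}$. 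That row-orthogonality identity is precisely what makes the trace conjugation-invariant, so your trace formulation is a clean conceptual repackaging of the same computation.

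Your \emph{primary} route, by contrast, is genuinely different: you realize $\invstate[k]$ as integration over $S^{2n+1}$ against the normalized invariant surface measure, matching monomial moments by a polar decomposition of the Gaussian integral. This is perfectly valid and has the virtue of making the geometric origin of $\invstate[k]$ transparent (it is the evaluation-on-the-levelset functional that underlies the classical reduction in Proposition~\ref{proposition:classicalReduction}). The paper's purely algebraic argument, on the other hand, stays entirely within the representation-theoretic setup already established and avoids any analytic input beyond what was used to define $\skalhbar{\argument}{\argument}$.
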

\begin{proof}
  The multinomial formula for $\momentmap^k$ together with the definition of $\invstate[k]$ yield
  \begin{equation*}
    \dupr{\invstate[k]}{\momentmap^k}
    =
    \sum_{K \in \NN_0^{1+n}, \abs{K}=k} \frac{k!}{K!} \dupr{\invstate[k]}{ z^K \cc{z}^K }
    =
    \sum_{K \in \NN_0^{1+n}, \abs{K}=k} \frac{k! n!}{(k+n)!}
    =
    1
    \punkt
  \end{equation*}
  In order to check the $\group{U}(1+n)$-invariance, recall that $\group{U}(1+n)$ acts on $\Polynomials^{k,0}(\CC^{1+n})$ by pullbacks
  and that this action is unitary with respect to the inner product of $\PolyHol(\CC^{1+n})$, for all $\hbar \in {]0,\infty[}$. 
  We can consider $\hbar = 1$ in the following, then $\set{(K!)^{-1/2} z^K}{K\in \NN_0^{1+n} \textup{ with }\abs{K}=k}$
  is a $\skalhbarIsOne{\argument}{\argument}$-orthonormal basis of $\Polynomials^{k,0}(\CC^{1+n})$.
  So let $u\in \group{U}(1+n)$ be given, then there exists a unitary matrix $( \rho(u)_{K,L} )_{K,L}$ representing
  the action of $u$ on $\Polynomials^{k,0}(\CC^{1+n})$ in this basis, i.e.
  \begin{equation*}
    (K!)^{-1/2} z^K \racts u = \sum_{M\in \NN_0^{1+n}, \abs{M} = k} \rho(u)_{K,M} (M!)^{-1/2} z^M
  \end{equation*}
  holds for all $K \in \NN_0^{1+n}$ with $\abs{K} = k$. Using that $\group{U}(1+n)$ also acts on $\Polynomials^{k,k}(\CC^{1+n})$
  by pullbacks, one therefore finds that
  \begin{align*}
    \dupr[\big]{\invstate[k]}{ (z^K \cc{z}^L) \racts u }
    &=
    \dupr[\big]{\invstate[k]}{( z^K \racts u ) ( \cc{z^L \racts u} ) }
    \\
    &=
    (K!)^{1/2} (L!)^{1/2} \sum_{\substack{M,N\in \NN_0^{1+n} \\ \abs{M}=\abs{N}=k}} \dupr[\big]{\invstate[k]}{ \big( \rho(u)_{K,M} (M!)^{-1/2} z^M\big) \big( \cc{ \rho(u)_{L,N} (N!)^{-1/2} z^N}\big) }
    \\
    &=
    (K!)^{1/2} (L!)^{1/2} \sum_{M\in \NN_0^{1+n}, \abs{M}=k} \rho(u)_{K,M} \,\cc{ \rho(u)_{L,M}} \frac{ n!}{(k+n)!}
    \\
    &=
    (K!)^{1/2} (L!)^{1/2} \delta_{K,L} \frac{ n!}{(k+n)!}
    \\
    &=
    \dupr{\invstate[k]}{ z^K \cc{z}^L}
  \end{align*}
  holds for all $K,L\in \NN_0^{1+n}$ with $\abs{K}=\abs{L} = k$.
\end{proof}
\begin{corollary} \label{corollary:invstateK}
  For all $k\in \NN_0$ and all $f\in \Polynomials^{k,k}(\CC^{1+n})$, the identity
  \begin{equation}
     \momentmap^k \dupr{\invstate[k]}{f} = \integral{u\in \group{U}(1+n)}{ (f\racts u) }{ \nu(u)}
  \end{equation}
  holds, where $\nu$ denotes the unique right-invariant volume form on $\group{U}(1+n)$ that fulfils $\integral{\group{U}(1+n)}{}{\nu} = 1$.
\end{corollary}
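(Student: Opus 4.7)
The plan is to exploit the fact that $\Polynomials^{k,k}(\CC^{1+n})$ contains a one-dimensional subspace of $\group{U}(1+n)$-invariants, namely the multiples of $\momentmap^k$, and that $\invstate[k]$ detects the coefficient along this subspace.

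First, I would introduce the averaged polynomial
\begin{equation*}
  \tilde f \coloneqq \integral{u\in \group{U}(1+n)}{(f\racts u)}{\nu(u)} \punkt
\end{equation*}
Since the right action of $\group{U}(1+n)$ on $\Polynomials(\CC^{1+n})$ preserves the bidegree decomposition, $\tilde f$ still lies in $\Polynomials^{k,k}(\CC^{1+n})$. The right-invariance of $\nu$ yields $\tilde f \racts v = \tilde f$ for every $v\in\group{U}(1+n)$, so $\tilde f$ is $\group{U}(1+n)$-invariant.

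Next I would identify the space of $\group{U}(1+n)$-invariants in $\Polynomials^{k,k}(\CC^{1+n})$. The representation of $\group{U}(1+n)$ on $\Polynomials^{k,0}(\CC^{1+n})$ is the $k$-th symmetric power of the defining representation, hence irreducible, and the complex conjugate representation realizes $\Polynomials^{0,k}(\CC^{1+n})$. Pointwise multiplication induces a $\group{U}(1+n)$-equivariant surjection $\Polynomials^{k,0}(\CC^{1+n})\otimes\Polynomials^{0,k}(\CC^{1+n})\to\Polynomials^{k,k}(\CC^{1+n})$, and Schur's lemma tells us that the invariants in the tensor product are one-dimensional. Since $\momentmap^k$ is a nonzero invariant element of $\Polynomials^{k,k}(\CC^{1+n})$ (being the $k$-th power of the $\group{U}(1+n)$-invariant polynomial $\momentmap$), it follows that
\begin{equation*}
  \tilde f = \lambda\, \momentmap^k
\end{equation*}
for some uniquely determined $\lambda \in \CC$.

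Finally, to pin down $\lambda$, I would apply the functional $\invstate[k]$. By $\group{U}(1+n)$-invariance of $\invstate[k]$ and linearity (exchanging $\invstate[k]$ with the integral over the compact group $\group{U}(1+n)$ is just a finite linear combination on the finite-dimensional space $\Polynomials^{k,k}(\CC^{1+n})$),
\begin{equation*}
  \dupr{\invstate[k]}{\tilde f}
  =
  \integral{u\in\group{U}(1+n)}{\dupr{\invstate[k]}{f\racts u}}{\nu(u)}
  =
  \dupr{\invstate[k]}{f}\komma
\end{equation*}
while $\dupr{\invstate[k]}{\lambda\,\momentmap^k} = \lambda \cdot 1 = \lambda$ by the previous proposition. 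Hence $\lambda = \dupr{\invstate[k]}{f}$, giving the claimed identity. The only substantive point is the one-dimensionality of the invariants, for which Schur's lemma together with the irreducibility of the symmetric power does all the work; the rest is bookkeeping.
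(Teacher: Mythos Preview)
Your proof is correct and follows essentially the same route as the paper's: average $f$ over $\group{U}(1+n)$, observe the result is an invariant element of $\Polynomials^{k,k}(\CC^{1+n})$ and hence a scalar multiple of $\momentmap^k$, then read off the scalar by applying $\invstate[k]$ and using its $\group{U}(1+n)$-invariance together with $\dupr{\invstate[k]}{\momentmap^k}=1$. The only difference is that the paper simply asserts the one-dimensionality of the invariants, whereas you supply an argument via Schur's lemma; this is a harmless elaboration, not a different approach.
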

\begin{proof}
  Given $f\in \Polynomials^{k,k}(\CC^{1+n})$, then let $f_\av \coloneqq \integral{u\in \group{U}(1+n)}{ (f\racts u) }{ \nu(u)} \in \Polynomials^{k,k}(\CC^{1+n})$.
  Using the right-invariance of $\nu$ one finds that
  $f_\av \racts u' = \integral{u\in \group{U}(1+n)}{ (f\racts uu') }{ \nu(u)} = f_\av$ holds for all $u'\in \group{U}(1+n)$,
  i.e. $f_\av$ is $\group{U}(1+n)$-invariant.
  As all such $\group{U}(1+n)$-invariant elements of $\Polynomials^{k,k}(\CC^{1+n})$ necessarily are scalar multiples of $\momentmap^k$,
  there exists $\alpha \in \CC$ such that $f_\av = \alpha \momentmap^k$.
  It now follows that
  \begin{equation*}
    \momentmap^k\dupr{\invstate[k]}{f}
    =
    \momentmap^k \integral{u\in\group{U}(1+n)}{ \dupr{\invstate[k]}{f\racts u} }{\nu(u)}
    =
    \momentmap^k \dupr[\big]{\invstate[k]}{ f_\av }
    =
    \alpha  \momentmap^k \dupr[\big]{\invstate[k]}{ \momentmap^k }
    =
    \alpha \momentmap^k 
    =
    f_\av
    \punkt
  \end{equation*}
\end{proof}
It thus makes sense to define:
\begin{definition}
  We define the \emph{averaging operator} $\argument_\av \colon \bigoplus_{k\in \NN_0} \Polynomials^{k,k}(\CC^{1+n}) \to \bigoplus_{k\in \NN_0} \Polynomials^{k,k}(\CC^{1+n})$,
  \begin{equation}
    \sum\nolimits_{k\in \NN_0} f_k \mapsto \Big( \sum\nolimits_{k\in \NN_0} f_k \Big)_\av \coloneqq \sum\nolimits_{k\in \NN_0} \momentmap^k \dupr{\invstate[k]}{f_k}
    \punkt
  \end{equation}
\end{definition}
\begin{proposition} \label{proposition:avfinitecomb}
  For all $f\in \bigoplus_{k\in \NN_0} \Polynomials^{k,k}(\CC^{1+n})$ there exist $d\in \NN_0$,
  $u_1,\dots, u_d \in \group{U}(1+n)$ and $\lambda_1,\dots, \lambda_d \in {[0,1]}$ with $\sum_{j=1}^d \lambda_j = 1$
  such that
  \begin{equation}
    f_\av = \sum_{j=1}^d \lambda_j f\racts u_j
    \punkt
  \end{equation}
\end{proposition}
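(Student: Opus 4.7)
The plan is to realize $f_\av$ as a point in the convex hull of the orbit $\set{f\racts u}{u\in \group{U}(1+n)}$ inside a finite\=/dimensional real vector space, and then invoke Carathéodory's theorem to cut the implicit integral down to a finite convex combination.

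First, I would observe that $f$ has only finitely many non\=/vanishing homogeneous components, so $f \in V \coloneqq \bigoplus_{k=0}^N \Polynomials^{k,k}(\CC^{1+n})$ for some $N\in \NN_0$. This $V$ is a finite\=/dimensional complex vector space, hence a finite\=/dimensional real vector space, and $V$ is stable under the right action $\argument \racts \argument$ of $\group{U}(1+n)$ since this action preserves each bidegree. Consequently the orbit $O_f \coloneqq \set{f \racts u}{u\in \group{U}(1+n)} \subseteq V$ is compact, as the continuous image of the compact group $\group{U}(1+n)$.

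Next, Corollary~\ref{corollary:invstateK} (applied componentwise to the finitely many non\=/zero $f_k$) gives
\begin{equation*}
  f_\av
  =
  \sum_{k=0}^N \momentmap^k \dupr{\invstate[k]}{f_k}
  =
  \sum_{k=0}^N \integral{u\in \group{U}(1+n)}{(f_k \racts u)}{\nu(u)}
  =
  \integral{u\in \group{U}(1+n)}{(f \racts u)}{\nu(u)}
  \punkt
\end{equation*}
Because $\nu$ is a probability measure and $V$ is a finite\=/dimensional real topological vector space, this vector\=/valued integral lies in the closed convex hull of $O_f$ in $V$ (approximate the integral by Riemann sums, each of which is a finite real convex combination of elements of $O_f$). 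Since $O_f$ is compact and $V$ is finite\=/dimensional, its closed convex hull coincides with its convex hull, so $f_\av$ is already a finite real convex combination of elements of $O_f$.

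Finally, Carathéodory's theorem in the real vector space $V$ yields the explicit finite bound: there exist $d\in \NN_0$ (one can take $d \le \dim_\RR V + 1$), elements $u_1,\dots,u_d\in \group{U}(1+n)$ and $\lambda_1,\dots,\lambda_d \in [0,1]$ with $\sum_{j=1}^d \lambda_j = 1$ such that $f_\av = \sum_{j=1}^d \lambda_j\,(f \racts u_j)$. The only subtlety is remembering to treat $V$ as a real vector space (so that ``convex combination'' really means coefficients in $[0,1]$ summing to $1$), but this is automatic once one unpacks the integral as a limit of real Riemann sums.
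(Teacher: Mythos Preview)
Your proof is correct and follows essentially the same approach as the paper: both realize $f_\av$ as the Haar integral of the orbit map inside a finite\=/dimensional space $V$, observe that this lies in the closed convex hull of the compact orbit, argue that this closed convex hull equals the convex hull, and then invoke Carathéodory. The only cosmetic difference is that the paper proves ``convex hull of a compact set is closed'' explicitly (using Carathéodory to exhibit the hull as the continuous image of $\Delta^{(d-1)} \times S^d$), whereas you cite this standard finite\=/dimensional fact directly.
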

\begin{proof}
  Given $f = \sum_{k=0}^\infty f_k \in \bigoplus_{k\in \NN_0} \Polynomials^{k,k}(\CC^{1+n})$ with homogeneous components $f_k \in \Polynomials^{k,k}(\CC^{1+n})$,
  then it follows from Corollary~\ref{corollary:invstateK} that $f_\av = \integral{u\in\group{U}(1+n)}{(f\racts u)}{\nu(u)}$, where $\nu$ is again the unique normalized right-invariant
  volume form on $\group{U}(1+n)$, and where the integral is taken in the finite dimensional vector space $V \coloneqq \bigoplus_{k=0}^{k_{\max}} \Polynomials^{k,k}(\CC^{1+n})$
  with $k_{\max} \in \NN_0$ sufficiently large such that $f \in V$. From $\nu$ being normalized it follows that $f_\av$
	lies in the closure of the convex hull of the compact subset $S \coloneqq \set{ f \racts u }{u \in \group U(1+n)}$ of $V$.
	By Caratheodory's theorem, every element of the convex hull of $S$ can be expressed as a convex combination of $d \coloneqq 1 + \dim V$ elements of $S$.
	Thus the convex hull of $S$ is the image of the continuous map $\Delta^{(d-1)} \times S^{d} \to V$,
	$\big(\lambda_1, \dots, \lambda_{d}, f \racts u_1, \dots, f \racts u_d\big) \mapsto \sum_{j=1}^{d} \lambda_j f \racts u_j$
	with $\Delta^{(d-1)} \coloneqq \set[\big]{(\lambda_1, \dots, \lambda_{d}) \in [0,1]^d}{ \sum_{j=1}^d \lambda_j = 1}$ the compact $(d-1)$-simplex.
	However, this convex hull, being the image of the compact space $\Delta^{(d-1)} \times S^{d}$ under a continuous map,
	is compact and therefore is already closed, and consequently contains $f_\av$. So $f_\av = \sum_{j=1}^{d} \lambda_j f \racts u_j$
	for suitable $(\lambda_1,\dots, \lambda_d) \in \Delta^{(d-1)}$ and $u_1,\dots,u_d \in \group{U}(1+n)$.
\end{proof}
One can also combine the functionals $\invstate[k]$ into one especially useful functional on $\Polynomials_\hbar(\CC^{1+n})^{\lie u_1}$:
\begin{definition} \label{definition:invstate1}
  For all $\hbar \in {]0,\infty[}$ and all $\mu \in \RR$, define $\invstateSum \colon \Polynomials_\hbar(\CC^{1+n})^{\lie u_1} \to \CC$,
  \begin{equation}
    f \mapsto \dupr{\invstateSum}{f} \coloneqq \sum_{k=0}^\infty \hbar^k \Big( \frac{\mu}{\hbar} \Big)_{\downarrow, k} \dupr{\invstate[k]}{ f_k }
    ,
  \end{equation}
  where $f_k \in \Polynomials^{k,k}(\CC^{1+n})$ are the homogeneous components of $f$ and with the falling factorial $\falling{\argument}{k}$
  from \eqref{eq:fallingFactorialDef}.
\end{definition}
\begin{proposition} \label{proposition:invstateProp}
  Let $\hbar \in {]0,\infty[}$, $\mu \in \RR$ and $f\in \Polynomials_\hbar(\CC^{1+n})^{\lie u_1}$ be given, then
  \begin{equation}
    \dupr{\invstateSum}{f} \Unit - f_\av \in \genSId{\momentmap-\mu} .
  \end{equation}
\end{proposition}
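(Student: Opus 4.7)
The plan is to unpack both quantities on the level of the homogeneous components $f_k \in \Polynomials^{k,k}(\CC^{1+n})$ of $f$ and reduce the statement to a single instance of Lemma~\ref{lemma:moduloSId1}. Since $f \in \Polynomials_\hbar(\CC^{1+n})^{\lie u_1} = \bigoplus_{k\in\NN_0}\Polynomials^{k,k}(\CC^{1+n})$, only finitely many $f_k$ are nonzero, so no convergence issue arises.

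Concretely, by the definition of the averaging operator we have $f_\av = \sum_{k\in \NN_0} \momentmap^k \dupr{\invstate[k]}{f_k}$, where juxtaposition (and in particular $\momentmap^k$) denotes the pointwise product, while by Definition~\ref{definition:invstate1} the scalar $\dupr{\invstateSum}{f}$ equals $\sum_{k\in \NN_0} \hbar^k \falling{\mu/\hbar}{k} \dupr{\invstate[k]}{f_k}$. Subtracting yields
\begin{equation*}
  \dupr{\invstateSum}{f}\Unit - f_\av = \sum_{k\in \NN_0} \dupr{\invstate[k]}{f_k} \Big( \hbar^k \falling{\mu/\hbar}{k} \Unit - \momentmap^k \Big) \punkt
\end{equation*}
Hence it suffices to show that each summand lies in $\genSId{\momentmap-\mu}$.

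This last step is exactly Lemma~\ref{lemma:moduloSId1} applied with $g = \Unit \in \Polynomials^{0,0}(\CC^{1+n})$ and $\ell = 0$, which gives $\momentmap^k - \hbar^k \falling{\mu/\hbar}{k}\Unit \in \genSId{\momentmap-\mu}$ for every $k \in \NN_0$. Multiplying by the scalar $\dupr{\invstate[k]}{f_k}$ and summing over the finitely many nonzero components of $f$ produces the claim, because $\genSId{\momentmap-\mu}$ is a linear subspace of $\Polynomials_\hbar(\CC^{1+n})^{\lie u_1}$.

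There is no serious obstacle here; the main point is merely to observe that the scalars $\hbar^k \falling{\mu/\hbar}{k}$ appearing in Definition~\ref{definition:invstate1} are precisely the coefficients for which $\hbar^k \falling{\mu/\hbar}{k}\Unit$ represents $\momentmap^k$ modulo $\genSId{\momentmap-\mu}$, so that the definition of $\invstateSum$ is matched by construction to make the difference $\dupr{\invstateSum}{f}\Unit - f_\av$ vanish modulo $\genSId{\momentmap-\mu}$.
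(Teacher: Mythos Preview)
Your proof is correct and follows essentially the same route as the paper: both write $\dupr{\invstateSum}{f}\Unit - f_\av = \sum_{k}\dupr{\invstate[k]}{f_k}\big(\hbar^k\falling{\mu/\hbar}{k}\Unit - \momentmap^k\big)$ and invoke Lemma~\ref{lemma:moduloSId1} with $g=\Unit$, $\ell=0$ to see that each summand lies in $\genSId{\momentmap-\mu}$.
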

\begin{proof}
	From $f_\av = \sum_{k = 0}^\infty (f_\av)_k = \sum_{k=0}^\infty \momentmap^k \dupr{\invstate[k]}{f_k}$
	with $(f_\av)_k \in \Polynomials^{k,k}(\CC^{1+n})$ the homogeneous components of $f_\av$,
	together with Lemma~\ref{lemma:moduloSId1} for $\ell \coloneqq 0$, $g\coloneqq \Unit$, it follows that
	\begin{equation*}
	  \dupr{\invstateSum}{f} \Unit - f_\av
	  =
	  \sum_{k=0}^\infty \bigg( \hbar^k \Big( \frac{ \mu }{ \hbar }\Big)_{\downarrow, k} \Unit - \momentmap^k\bigg) \dupr{\invstate[k]}{ f_k }
	  \in
	  \genSId{\momentmap-\mu}
	  \punkt
	\end{equation*}
\end{proof}
This property of the functional $\invstateSum$ can be used in order to determine a representation of $-\Unit$ as a sum of Hermitian squares,
up to a contribution of $\genSId{\momentmap-\mu}$,
provided that one finds $f\in \Polynomials_\hbar(\CC^{1+n})^{\lie u_1}$ for which $\dupr{\invstateSum}{f^* \star_\hbar f} < 0$ holds.
In order to explicitly calculate this expression, we need:
\begin{lemma} \label{lemma:fallingFactorials}
	For all $x, y \in \CC$ and all $k \in \NN_0$ we have
	\begin{equation} \label{eq:fallingFactorialFormula1}
		\falling{x+y}{k} = \sum_{t=0}^k \binom k t \falling x t \falling y {k-t} \punkt
	\end{equation}
	In particular,
	\begin{equation} \label{eq:summingFallingFactorials1}
		\sum_{t=0}^k \binom{k}{t} \frac{\falling{z}{2k-t}}{(2k -t +n)!} 
		=
		\frac{ \falling z k \falling{z+k+n}{k}  }{(2 k+n)!} 
		\punkt
	\end{equation}
	holds for all $z \in \CC$ and $k \in \NN_0$.
\end{lemma}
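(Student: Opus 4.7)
The plan is to prove \eqref{eq:fallingFactorialFormula1} first (it is the usual Vandermonde--Chu identity for falling factorials) and then deduce \eqref{eq:summingFallingFactorials1} from it.

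For \eqref{eq:fallingFactorialFormula1} I would argue by induction on $k$. The case $k=0$ is trivial, both sides being $1$. For the inductive step I use the defining recursion $\falling{w}{k+1} = (w-k)\falling{w}{k}$ and the decomposition $x+y-k = (x-t) + (y-(k-t))$ to write
\begin{align*}
	\falling{x+y}{k+1}
	&= \sum_{t=0}^k \binom{k}{t}\falling{x}{t}\falling{y}{k-t}\bigl((x-t)+(y-(k-t))\bigr)\\
	&= \sum_{t=0}^k \binom{k}{t}\falling{x}{t+1}\falling{y}{k-t} + \sum_{t=0}^k \binom{k}{t}\falling{x}{t}\falling{y}{k+1-t}\punkt
\end{align*}
Shifting the summation index in the first sum and applying Pascal's rule $\binom{k}{t-1}+\binom{k}{t} = \binom{k+1}{t}$ then collects the two sums into $\sum_{t=0}^{k+1}\binom{k+1}{t}\falling{x}{t}\falling{y}{k+1-t}$, completing the induction. (Alternatively, both sides are polynomials in $x,y$, and for $x,y\in\NN_0$ the identity is the obvious count of injections from a $k$\nobreakdash-set to the disjoint union of an $x$\nobreakdash-set and a $y$\nobreakdash-set by how many elements land in each.)

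For \eqref{eq:summingFallingFactorials1} the key observation is the factorization $\falling{z}{2k-t} = \falling{z}{k}\,\falling{z-k}{k-t}$, which follows immediately from writing out the product in \eqref{eq:fallingFactorialDef}. Using moreover $\dfrac{(2k+n)!}{(2k-t+n)!} = \falling{2k+n}{t}$, I multiply through by $(2k+n)!$ and rewrite the left-hand side as
\begin{equation*}
	\falling{z}{k} \sum_{t=0}^k \binom{k}{t} \falling{z-k}{k-t} \falling{2k+n}{t}
	\punkt
\end{equation*}
Reindexing $t\mapsto k-t$ (using $\binom{k}{t}=\binom{k}{k-t}$), this sum is exactly the right-hand side of \eqref{eq:fallingFactorialFormula1} with $x = z-k$ and $y = 2k+n$, which evaluates to $\falling{(z-k)+(2k+n)}{k} = \falling{z+k+n}{k}$. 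Dividing again by $(2k+n)!$ yields the claim.

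No real obstacle is expected; the only subtlety is the careful bookkeeping of indices and binomial symmetries, plus remembering that $\falling{z}{a+b} = \falling{z}{a}\falling{z-a}{b}$, which is what makes the Vandermonde identity applicable and reduces \eqref{eq:summingFallingFactorials1} to \eqref{eq:fallingFactorialFormula1}.
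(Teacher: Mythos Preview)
Your proof is correct and follows essentially the same approach as the paper: the paper also proves \eqref{eq:fallingFactorialFormula1} by induction on $k$ via Pascal's rule, and then derives \eqref{eq:summingFallingFactorials1} by the very same factorizations $\falling{z}{2k-t}=\falling{z}{k}\falling{z-k}{k-t}$ and $(2k+n)!/(2k-t+n)!=\falling{2k+n}{t}$, followed by an application of \eqref{eq:fallingFactorialFormula1} with the pair $(z-k,\,2k+n)$. Your write-up is just more detailed than the paper's terse two-line version.
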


\begin{proof}
	Using the identity $\binom{k+1}{t} =\binom{k}{t-1} + \binom{k}{t}$ one easily proves \eqref{eq:fallingFactorialFormula1} by induction over $k$.
	Then
	\begin{equation*}
		\sum_{t=0}^k \binom{k}{t} \frac{\falling{z}{2k-t} }{(2k -t +n)!} 
    =
		\frac {\falling z k} {(2k+n)!} \sum_{t=0}^k \binom{k}{t} \falling{z-k}{k-t} \falling{2k+n}{t}
		=
		\frac {\falling z k} {(2k+n)!} \falling{z+k+n}{k} \punkt
	\end{equation*}
\end{proof}
\begin{proposition} \label{proposition:minusOne}
  For all $\hbar \in {]0,\infty[}$ and all $\mu \in \RR \setminus \big( \set{\hbar k}{k \in \NN_0} \cup \set{-\hbar (1+n + k) }{k \in \NN_0 } \big)$
  one has
  \begin{equation}
    {- \Unit} \in \big( \Polynomials_\hbar(\CC^{1+n})^{\lie u_1} \big)^{++}_\Hermitian + \big(\genSId{\momentmap-\mu}\big)_\Hermitian
    \punkt
  \end{equation}
\end{proposition}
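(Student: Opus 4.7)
The plan is to find a single element $f \in \Polynomials_\hbar(\CC^{1+n})^{\lie u_1}$ with $\dupr{\invstateSum}{f^* \star_\hbar f} < 0$, where $\invstateSum$ is the functional of Definition~\ref{definition:invstate1}. Given such an $f$, set $c \coloneqq \dupr{\invstateSum}{f^* \star_\hbar f}$. Proposition~\ref{proposition:invstateProp} then gives $(f^* \star_\hbar f)_\av - c\Unit \in \genSId{\momentmap-\mu}$, while Proposition~\ref{proposition:avfinitecomb} combined with the $\group{U}(1+n)$-equivariance of $\star_\hbar$ writes
\begin{equation*}
  (f^* \star_\hbar f)_\av = \sum_{j=1}^d \lambda_j\, (f\racts u_j)^* \star_\hbar (f\racts u_j)
\end{equation*}
with $\lambda_j \geq 0$. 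Because the diagonal $\group{U}(1)$-subgroup of $\group{U}(1+n)$ is central, each $f \racts u_j$ is still $\group{U}(1)$-invariant, so $(f^* \star_\hbar f)_\av \in \big(\Polynomials_\hbar(\CC^{1+n})^{\lie u_1}\big)^{++}_\Hermitian$. Then
\begin{equation*}
  -\Unit = -\frac{1}{c}(f^* \star_\hbar f)_\av + \frac{1}{c}\big[(f^* \star_\hbar f)_\av - c\Unit\big]
\end{equation*}
yields the desired expression, as $-1/c > 0$ and sums of Hermitian squares are stable under multiplication by positive scalars.

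To construct such an $f$, I would try the ansatz $f_k \coloneqq \cc z_0^k z_1^k \in \Polynomials^{k,k}(\CC^{1+n})$ for a $k \in \NN$ to be chosen (permitted since $n \geq 1$). A direct expansion of \eqref{eq:def:wickproduct} --- only the multi-indices $K$ with $K_0 = K_2 = \dots = K_n = 0$ and $K_1 \leq k$ contribute --- gives
\begin{equation*}
  f_k^* \star_\hbar f_k = \sum_{t=0}^k \frac{\hbar^t (k!)^2}{t!\,((k-t)!)^2}\,(z_0 \cc z_0)^k (z_1 \cc z_1)^{k-t}.
\end{equation*}
The $t$-th summand lies in $\Polynomials^{2k-t,\,2k-t}(\CC^{1+n})$ and is sent by $\invstate[2k-t]$ to $k!(k-t)!\,n!/(2k-t+n)!$. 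Assembling these contributions and collapsing them via identity \eqref{eq:summingFallingFactorials1} of Lemma~\ref{lemma:fallingFactorials} produces the closed form
\begin{equation*}
  \dupr{\invstateSum}{f_k^* \star_\hbar f_k}
  =
  \frac{\hbar^{2k} (k!)^2 n!}{(2k+n)!}\prod_{j=0}^{k-1}\Big(\frac{\mu}{\hbar} - j\Big)\Big(\frac{\mu}{\hbar} + n + 1 + j\Big).
\end{equation*}

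All that remains is to choose $k$ making this product negative. Set $z \coloneqq \mu/\hbar$; the hypothesis on $\mu$ says exactly that $z \notin \NN_0$ and $z \notin \set{-(n+1+k)}{k\in \NN_0}$, so no factor vanishes. A short case analysis suffices: if $z \in (m, m+1)$ with $m \in \NN_0$, then $k = m+2$ produces exactly one negative factor $(z - (m+1))$; if $z \in (-(n+1), 0)$, including the allowed integer values $-1, \dots, -n$, then already $k = 1$ gives $z(z+n+1) < 0$; and if $z \in (-(n+p+2), -(n+p+1))$ with $p \in \NN_0$, then $k = p+2$ yields $(p+2) + (p+1) = 2p+3$ negative factors in total.

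The main obstacle is really the closed-form evaluation of $\dupr{\invstateSum}{f_k^* \star_\hbar f_k}$: without the factorization identity in Lemma~\ref{lemma:fallingFactorials}, the double sum in $t$ resists analysis, and it is only after the explicit product $\prod_{j=0}^{k-1}(z-j)(z+n+1+j)$ emerges that the sign question becomes elementary. Once that step is done, the framework established in the first paragraph converts the single negative-value test into the required representation of $-\Unit$.
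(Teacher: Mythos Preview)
Your proof is correct and follows essentially the same approach as the paper's: the same test element up to the coordinate swap $z_0 \leftrightarrow z_1$ (the paper takes $z_0^k\cc z_1^k$ rather than $\cc z_0^k z_1^k$, which yields the same value of $\invstateSum$ by $\group{U}(1+n)$-invariance), the same closed-form evaluation via Lemma~\ref{lemma:fallingFactorials}, and the same averaging-plus-ideal argument to extract $-\Unit$. Your case analysis for negativity is a bit more explicit than the paper's, but the chosen $k$ in each regime coincides with the paper's ``smallest $k$'' prescription.
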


\begin{proof}
  For all $k\in \NN_0$, $\mu \in \RR$ we calculate, using identity \eqref{eq:summingFallingFactorials1} from the previous Lemma~\ref{lemma:fallingFactorials}:
  \begin{align*}
    \dupr[\big]{\invstateSum}{\big(z_0^k \cc z_1^k\big)^* \star_\hbar \big(z_0^k \cc z_1^k\big)}
    &=
    \dupr[\Big]{\invstateSum}{ \sum\nolimits_{t=0}^k \frac{\hbar^t(k!)^2}{t!((k-t)!)^2} z_0^{k-t} \cc z_0^{k-t}z_1^k\cc z_1^k }
    \\
    &=
    \sum_{t=0}^k \frac{\hbar^{2k}(k!)^2}{t!((k-t)!)^2} \bigg(\frac{\mu}{\hbar}\bigg)_{\downarrow,2k-t} \dupr[\big]{\invstate[2k-t]}{ z_0^{k-t} \cc z_0^{k-t}z_1^k\cc z_1^k }
    \\
    &=
    \sum_{t=0}^k \frac{\hbar^{2k}(k!)^3 n!}{t!(k-t)!(2k-t+n)!} \bigg(\frac{\mu}{\hbar}\bigg)_{\downarrow,2k-t} 
    \\
    &=
    \hbar^{2k}(k!)^2 n! \sum_{t=0}^k \binom{k}{t} \frac{(\mu / \hbar)_{\downarrow, 2k-t}}{ (2k-t+n)! }
    \\
    &=
    \hbar^{2k}(k!)^2 n! \frac{(\mu / \hbar)_{\downarrow, k} (\mu / \hbar + n + k)_{\downarrow, k} }{ (2k+n)! }
    \punkt
  \end{align*}
  If $\mu \notin \set{\hbar k}{k \in \NN_0} \cup \set{-\hbar (1+n + k) }{k \in \NN_0}$,
  then one can find an exponent $k\in \NN$ for which $\alpha \coloneqq -\dupr{\invstateSum}{(z_0^k \cc z_1^k)^* \star_\hbar z_0^k \cc z_1^k} > 0$:
  Indeed, if $\mu > 0$, then one can choose $k$ as the smallest element of $\NN$ for which $\mu / \hbar - (k-1) < 0$,
  and if $\mu < 0$, then choosing $k$ as the smallest element of $\NN$ fulfilling $\mu / \hbar + n+k > 0$ works.
  
  Combining Propositions~\ref{proposition:avfinitecomb} and \ref{proposition:invstateProp} now shows that
  there exist $h \in \genSId{\momentmap-\mu}$, $d\in \NN_0$,
  $u_1,\dots, u_d \in \group{U}(1+n)$ and $\lambda_1,\dots, \lambda_d \in {[0,1]}$ with $\sum_{j=1}^d \lambda_j = 1$
  such that
  \begin{equation*}
    -\alpha \Unit - h
    =
    \sum_{j=1}^d \lambda_j \big((z_0^k \cc z_1^k)^* \star_\hbar z_0^k \cc z_1^k\big)\racts u_j 
    =
    \sum_{j=1}^d \lambda_j (z_0^k \cc z_1^k\racts u_j)^* \star_\hbar (z_0^k \cc z_1^k \racts u_j)
    \in
    \big(\Polynomials_\hbar(\CC^{1+n})^{\lie u_1}\big)^{++}_\Hermitian
  \end{equation*}
  holds. Rescaling by $\alpha^{-1}$ one finds that $- \Unit \in ( \Polynomials_\hbar(\CC^{1+n})^{\lie u_1} )^{++}_\Hermitian + (\genSId{\momentmap-\mu})_\Hermitian$.
\end{proof}
This proves Theorem~\ref{theorem:main} for momenta $\mu \in {[0,\infty[} \setminus \set{\hbar k}{k \in \NN_0}$. For
$\mu \in \set{\hbar k}{k \in \NN_0}$, however, we need a different approach:
\begin{lemma} \label{lemma:posSquares}
  Let $\hbar \in {]0,\infty[}$ and let $f \in \Polynomials^{k,0}(\CC^{1+n})$ be a homogeneous holomorphic polynomial of degree $k \in \NN$.
  Then $f \cc f \in \big(\Polynomials_\hbar(\CC^{1+n})^{\lie u_1} \big)_\Hermitian^{++} + \big( \genSId{\momentmap-\hbar (k-1)} \big)_\Hermitian$.
\end{lemma}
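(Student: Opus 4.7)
The plan is to construct an explicit Hermitian-square representation of a positive multiple of $f\cc f$ modulo the ideal, by averaging the naive (but non-invariant) square $\cc f \star_\hbar f$ over a family of translates. For each multi-index $J \in \NN_0^{1+n}$ with $\abs{J}=k$, set $g_J \coloneqq z^J \cc f$; since $g_J \in \Polynomials^{k,k}(\CC^{1+n})$ has balanced bidegree, it lies in $\Polynomials_\hbar(\CC^{1+n})^{\lie u_1}$. I would then study the weighted sum of squares
\[
S \coloneqq \sum_{\abs J = k} \frac{k!}{J!}\, g_J^* \star_\hbar g_J \in \big(\Polynomials_\hbar(\CC^{1+n})^{\lie u_1}\big)^{++}_\Hermitian.
\]
Writing $f = \sum_{|K|=k} a_K z^K$ so that $g_J^* = \cc z^J f$ and expanding bilinearly, a short inspection of the Wick formula shows that in each product $(z^K \cc z^J) \star_\hbar (z^J \cc z^{K'})$ the outer factors $z^K$ and $\cc z^{K'}$ are not differentiated, so the product factorizes as $z^K \cc z^{K'} \cdot (\cc z^J \star_\hbar z^J)$. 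Summing over $K, K'$ then yields the clean factorization $S = f\cc f \cdot \sum_{|J|=k}(k!/J!)(\cc z^J \star_\hbar z^J)$, with pointwise multiplication on the outside.

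The next step is to evaluate this remaining sum as an explicit polynomial in $\momentmap$. Expanding $\cc z^J \star_\hbar z^J$ via the Wick formula and changing the inner summation variable to $M \coloneqq J-L$, I would invoke the combinatorial identity $\sum_{\abs L = t,\, L \in \NN_0^{1+n}} \binom{L+M}{L} = \binom{\abs M + t + n}{t}$, which one reads off from the coefficient of $y^t$ in $(1-y)^{-\abs M - n - 1}$. After collecting terms and using $\momentmap^{k-t} = \sum_{\abs M = k-t}((k-t)!/M!)\, z^M \cc z^M$, the double sum collapses to $S = f\cc f \cdot p_k(\momentmap)$ pointwise, with
\[
p_k(x) \coloneqq \sum_{t=0}^k \hbar^t \binom{n+k}{t} \falling{k}{t}\, x^{k-t}.
\]

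Finally, I would reduce $p_k(\momentmap)\cdot f\cc f$ modulo $\genSId{\momentmap - \hbar(k-1)}$ using Lemma~\ref{lemma:moduloSId1}: since $f\cc f \in \Polynomials^{k,k}(\CC^{1+n})$ and $\mu = \hbar(k-1)$, one has $\momentmap^{k-t}(f\cc f) \equiv \hbar^{k-t} \falling{-1}{k-t}\, f\cc f = \hbar^{k-t}(-1)^{k-t}(k-t)!\, f\cc f$. Substituting into $p_k(\momentmap)\cdot f\cc f$ and using the simplification $\binom{n+k}{t}\falling{k}{t}(k-t)! = k!\binom{n+k}{t}$ together with the elementary alternating-sum identity $\sum_{t=0}^k(-1)^t \binom{n+k}{t} = (-1)^k \binom{n+k-1}{k}$, the whole thing telescopes to
\[
S \equiv \hbar^k k!\binom{n+k-1}{k}\, f\cc f \pmod{\genSId{\momentmap - \hbar(k-1)}}.
\]
The constant $c \coloneqq \hbar^k k!\binom{n+k-1}{k}$ is strictly positive for $k \ge 1$, so rescaling the identity $S = c\, f\cc f + (\text{ideal element})$ by $1/c$ places $f\cc f$ in $\big(\Polynomials_\hbar(\CC^{1+n})^{\lie u_1}\big)^{++}_\Hermitian + \big(\genSId{\momentmap - \hbar(k-1)}\big)_\Hermitian$. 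The main obstacle I anticipate is the explicit bookkeeping in computing $p_k(x)$; once it is in closed form, the two binomial identities produce the crucial positive constant $c$ that makes the argument succeed precisely at the exceptional value $\mu = \hbar(k-1)$, where neither the ``trivial'' Proposition~\ref{proposition:trivial} nor Proposition~\ref{proposition:minusOne} is directly enough.
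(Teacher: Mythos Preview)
Your argument is correct. The factorization $(z^K\cc z^J)\star_\hbar(z^J\cc z^{K'}) = z^K\cc z^{K'}\,(\cc z^J\star_\hbar z^J)$ follows immediately from the Wick formula, your combinatorial identity $\sum_{|L|=t}\binom{L+M}{L}=\binom{|M|+t+n}{t}$ is standard, and the reduction via Lemma~\ref{lemma:moduloSId1} and the alternating binomial sum are routine. The constant $c=\hbar^k k!\binom{n+k-1}{k}$ is indeed strictly positive for $k\ge 1$ and $n\ge 1$, so the rescaling works.

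The paper follows the same overall scheme but realizes the inner factor differently: it takes the single element $\cc z_0^{\,k}\star_\hbar z_0^{\,k}$, applies the $\group{U}(1+n)$-averaging operator $(\,\cdot\,)_\av$, and then invokes Proposition~\ref{proposition:avfinitecomb} (a Carath\'eodory-type compactness argument) to rewrite this average as a finite convex combination $\sum_j\lambda_j(\cc z_0^{\,k}\star_\hbar z_0^{\,k})\racts u_j$. Sandwiching with $f\star_\hbar(\,\cdot\,)\star_\hbar\cc f$ then produces a sum of Hermitian squares of elements $(z_0^{\,k}\racts u_j)\cc f$, and the polynomial in $\momentmap$ that emerges is in fact a scalar multiple of your $p_k(\momentmap)$ (the ratio is $\binom{n+k}{k}^{-1}$), so the remaining computation with Lemma~\ref{lemma:moduloSId1} and the alternating sum is the same in both proofs. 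Your version is more elementary in that it replaces the $\group{U}(1+n)$-integral and its Carath\'eodory discretization by the explicit finite sum $\sum_{|J|=k}\frac{k!}{J!}\,\cc z^J\star_\hbar z^J$, at the price of one extra combinatorial identity; the paper's route keeps the symmetry more visible and reuses machinery already set up for Proposition~\ref{proposition:minusOne}.
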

\begin{proof}
  From 
  \begin{equation*}
    \cc z_0^{k} \star_\hbar z_0^{k}
    =
    \sum_{t=0}^k \frac{\hbar^t ( k! )^2 }{t! ((k-t)!)^2 } (z_0\cc{z_0})^{k-t}
    \quad\quad\text{and}\quad\quad
    \dupr[\big]{\invstate[k-t]}{ (z_0\cc{z_0})^{k-t} } = \frac{(k-t)! n!}{ (k-t+n)! }
  \end{equation*}
  it follows that
  \begin{equation*}
    \big( \cc z_0^{k} \star_\hbar z_0^{k} \big)_{\av}
    =
    \sum_{t=0}^k \frac{\hbar^t (k!)^2 n! }{ t! (k-t)! (k-t+n)! } \momentmap^{k-t}
    \punkt
  \end{equation*}
  By Proposition~\ref{proposition:avfinitecomb} there exist 
  $d\in \NN_0$, $u_1,\dots,u_d \in \group{U}(1+n)$ and $\lambda_1,\dots, \lambda_d \in {[0,1]}$
  with $\sum_{j=1}^d \lambda_j = 1$ such that
  \begin{equation*}
    \sum_{j=1}^d \lambda_j (\cc z_0^{k} \racts u_j) \star_\hbar (z_0^{k} \racts u_j)
    =
    \sum_{j=1}^d \lambda_j (\cc z_0^{k} \star_\hbar z_0^{k}) \racts u_j
    =
    \big( \cc z_0^{k} \star_\hbar z_0^{k} \big)_{\av}
    =
    \sum_{t=0}^k \frac{\hbar^t (k!)^2 n! }{ t! (k-t)! (k-t+n)! } \momentmap^{k-t}
  \end{equation*}
  holds, and consequently
  \begin{align*}
    \sum_{j=1}^d \lambda_j \big(( z_0^{k} \racts u_j) \cc{f}\big)^* \star_\hbar \big(( z_0^{k} \racts u_j) \cc{f}\big)
    &=
    f \star_\hbar \bigg( \sum_{j=1}^d \lambda_j (\cc z_0^{k} \racts u_j) \star_\hbar (z_0^{k} \racts u_j) \bigg) \star_\hbar \cc{f}
    \\
    &=
    f \star_\hbar \bigg( \sum_{t=0}^k \frac{\hbar^t (k!)^2 n! }{ t! (k-t)! (k-t+n)! } \momentmap^{k-t} \bigg) \star_\hbar \cc{f}
    \\
    &=
    \sum_{t=0}^k \frac{\hbar^t (k!)^2 n! }{ t! (k-t)! (k-t+n)! } \momentmap^{k-t} f\cc{f}
    \punkt
  \end{align*}
  As $( z_0^{k} \racts u_j) \cc{f} \in \Polynomials^{k,k}(\CC^{1+n}) \subseteq \Polynomials_\hbar(\CC^{1+n})^{\lie u_1}$,
  the above term is an element of $( \Polynomials_\hbar(\CC^{1+n})^{\lie u_1} )^{++}_\Hermitian$. Define
  \begin{equation*}
    g
    \coloneqq
    \sum_{t=0}^k \frac{\hbar^t (k!)^2 n! }{ t! (k-t)! (k-t+n)! } \big( \momentmap^{k-t} f\cc{f} - \hbar^{k-t} (-1)_{\downarrow,k-t} f\cc{f} \big)
    \komma
  \end{equation*}
  then $g \in ( \genSId{\momentmap-\hbar(k-1)} )_\Hermitian$ by Lemma~\ref{lemma:moduloSId1}. Moreover,
  \begin{align*}
    \sum_{t=0}^k \frac{\hbar^t (k!)^2 n! }{ t! (k-t)! (k-t+n)! } \hbar^{k-t} (-1)_{\downarrow,k-t}
    &=
    \hbar^k (k!)^2 n! \sum_{t=0}^k \frac{ (-1)^{k-t} }{ t! (k-t+n)! } 
    \\
    &=
    \frac{\hbar^k (k!)^2 n!}{(k+n)!} \sum_{t=0}^k (-1)^{k-t} \binom{k+n}{t}
    \\
    &=
    \frac{\hbar^k (k!)^2 n!}{(k+n)!} \binom{k-1+n}{k}
    \\
    &=
    \frac{\hbar^k k! \,n}{k+n}
    \komma
  \end{align*}
  using that $\binom{k+n}{t} = \binom{k-1+n}{t-1} + \binom{k-1+n}{t}$ for all $t\in \{0,\dots,k\}$
  with the convention that $\binom{k-1+n}{-1} \coloneqq 0$. Putting everything together we find that
  \begin{equation*}
    f \cc{f}
    =
    \frac{k+n}{\hbar^k k! \,n} \bigg(\sum_{j=1}^d \lambda_j \big(( z_0^{k} \racts u_j) \cc{f}\big)^* \star_\hbar \big(( z_0^{k} \racts u_j) \cc{f}\big) -g\bigg)
    \in
    \big(\Polynomials_\hbar(\CC^{1+n})^{\lie u_1}\big)^{++}_\Hermitian + \big( \genSId{\momentmap-\hbar(k-1)}\big)_\Hermitian
    \punkt
  \end{equation*}
\end{proof}
\begin{proposition} \label{proposition:kplus1homogeneous}
  For all $k\in \NN$ and all $\hbar \in {]0,\infty[}$ one has 
  \begin{equation}
    \Polynomials^{k,k}(\CC^{1+n}) \subseteq \supp_\CC \Big(\big(\Polynomials_\hbar(\CC^{1+n})^{\lie u_1}\big)^{++}_\Hermitian + \big( \genSId{\momentmap-\hbar(k-1)}\big)_\Hermitian \Big)
    \punkt
  \end{equation}
\end{proposition}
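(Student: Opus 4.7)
The plan is to build $\supp Q$ for
$Q \coloneqq \big(\Polynomials_\hbar(\CC^{1+n})^{\lie u_1}\big)^{++}_\Hermitian + \big(\genSId{\momentmap - \hbar(k-1)}\big)_\Hermitian$
starting from the diagonal monomials $z^K \cc z^K$ with $\abs K = k$, and then to pass to all of $\Polynomials^{k,k}(\CC^{1+n})$ by combining $\group U(1+n)$-invariance with polarization. The engine is that the pointwise power $\momentmap^k$ plays two roles at once: Lemma~\ref{lemma:moduloSId1} applied with $g \coloneqq \Unit$, $\ell \coloneqq 0$, $\mu \coloneqq \hbar(k-1)$ yields $\momentmap^k \in \genSId{\momentmap - \hbar(k-1)}$ (because the falling factorial $\falling{k-1}{k}$ vanishes), while the multinomial expansion writes $\momentmap^k = \sum_{\abs K = k} \binom{k}{K} z^K \cc z^K$ as a sum of elements that each lie in $Q$ by Lemma~\ref{lemma:posSquares} applied to $f \coloneqq z^K \in \Polynomials^{k,0}(\CC^{1+n})$.

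Combining these, since $-\momentmap^k$ lies in the ideal and hence in $Q$, the rearrangement
\begin{equation*}
    -\binom{k}{K_0} z^{K_0} \cc z^{K_0} = -\momentmap^k + \sum_{K \neq K_0} \binom{k}{K} z^K \cc z^K \in Q
\end{equation*}
shows $-z^{K_0}\cc z^{K_0} \in Q$; together with $z^{K_0}\cc z^{K_0} \in Q$ this places $z^{K_0}\cc z^{K_0} \in \supp Q$ for every $K_0$ with $\abs{K_0} = k$. I would then exploit the $\group U(1+n)$-invariance of $Q$ (the squares cone is preserved because $\star_\hbar$ is $\group U(1+n)$-equivariant, and the ideal is preserved because $\momentmap$ is $\group U(1+n)$-invariant), hence of $\supp Q$. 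Acting on $z_0^k \cc z_0^k$ produces $\ell^k \cc\ell^k \in \supp Q$ for every $\ell \coloneqq \sum_j w_j z_j$ with $(w_j)_j$ a unit vector in $\CC^{1+n}$, and rescaling by positive real numbers (which preserves $\supp Q$) extends this to all $w \in \CC^{1+n}$.

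The final step is polarization. The polynomial map
\begin{equation*}
    H(w) \coloneqq \bigg(\sum_{j=0}^n w_j z_j\bigg)^{\!k} \overline{\bigg(\sum_{j=0}^n w_j z_j\bigg)^{\!k}} = \sum_{\abs K = \abs L = k} \binom{k}{K}\binom{k}{L} w^K \cc w^L \, z^K \cc z^L
\end{equation*}
takes values in $\supp Q$ for every $w \in \CC^{1+n}$. If a $\CC$-linear functional $\phi$ on $\Polynomials^{k,k}(\CC^{1+n})$ vanished on every $H(w)$, then the polynomial $w \mapsto \dupr{\phi}{H(w)}$ would vanish identically on $\CC^{1+n}$; by linear independence of the monomials $w^K \cc w^L$ (for $\abs K = \abs L = k$) as functions on $\CC^{1+n}$, every $\dupr{\phi}{z^K \cc z^L}$ would be zero, so $\phi = 0$. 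Hence the $\CC$-linear span of $\set{H(w)}{w\in\CC^{1+n}}$ equals $\Polynomials^{k,k}(\CC^{1+n})$ and is contained in $\supp_\CC Q = \supp Q + \I\, \supp Q$, giving the claimed inclusion.

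The only real insight is recognizing that the formally positive element $\momentmap^k$ is simultaneously in the ideal at $\mu = \hbar(k-1)$: this is precisely what permits flipping the sign of each individual $z^K \cc z^K$. Once this observation is in hand, the remaining $\group U(1+n)$-averaging and polarization steps are routine.
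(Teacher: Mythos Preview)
Your proof is correct. The first half---showing that every diagonal $z^K\cc z^K$ with $\abs K=k$ lies in $\supp Q$ by combining $\momentmap^k\in\genSId{\momentmap-\hbar(k-1)}$ with the multinomial expansion and Lemma~\ref{lemma:posSquares}---matches the paper's argument exactly. The difference lies in how you pass from the diagonal monomials to all of $\Polynomials^{k,k}(\CC^{1+n})$. The paper applies Lemma~\ref{lemma:posSquares} a second time, to $f=z^K+\I^{-m}z^L$, and uses the explicit polarization identity
\[
\I^m z^K\cc z^L+\I^{-m}z^L\cc z^K=(z^K+\I^{-m}z^L)\cc{(z^K+\I^{-m}z^L)}-z^K\cc z^K-z^L\cc z^L
\]
to reach every Hermitian generator of $\Polynomials^{k,k}$ directly. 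You instead invoke the $\group U(1+n)$-invariance of $Q$ (and hence of $\supp Q$) to move $z_0^k\cc z_0^k$ to $\ell^k\cc\ell^k$ for arbitrary linear forms $\ell$, and then use a Veronese-type spanning argument. Both routes are clean; the paper's stays entirely inside the two lemmas already proved and is a bit shorter, while yours highlights the role of the $\group U(1+n)$-symmetry and avoids a second appeal to Lemma~\ref{lemma:posSquares}.
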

\begin{proof}
  It is sufficient to show that 
  $\I^m z^K \cc{z}^L + \I^{-m} z^L\cc{z}^K \in (\Polynomials_\hbar(\CC^{1+n})^{\lie u_1})^{++}_\Hermitian + ( \genSId{\momentmap-\hbar(k-1)})_\Hermitian$
  for all $K,L\in \NN_0^{1+n}$ with $\abs{K} = \abs{L} = k$ and all $m\in \{0,1,2,3\}$.
  
  First note that $\momentmap^k = (\momentmap - \hbar(k-1)) \star_\hbar \momentmap^{k-1} \in \genSId{\momentmap-\hbar(k-1)}$.
  From this and the previous Lemma~\ref{lemma:posSquares} it follows that
  \begin{equation*}
		- z^M \cc z^M
		=
		\frac{M!}{\abs{M}!}\bigg(
      \sum\nolimits_{\substack{N \in \NN_0^{1+n}\\ \abs N = k, N \neq M}} \frac{\abs N !}{N!} z^N \cc z^N
      -
      \momentmap^{k}
    \bigg)
    \in
    \big(\Polynomials_\hbar(\CC^{1+n})^{\lie u_1} \big)_\Hermitian^{++} + \big( \genSId{\momentmap-\hbar(k-1)} \big)_\Hermitian
	\end{equation*}
	for all $M\in \NN_0^{1+n}$ with $\abs{M} = k$. Making use of Lemma~\ref{lemma:posSquares} again, one finds that
	\begin{equation*}
	  \I^m z^K \cc{z}^L + \I^{-m} z^L\cc{z}^K
	  =
	  (z^K+\I^{-m} z^L)\cc{(z^K+\I^{-m} z^L)}
	  -
	  z^K \cc{z}^K
	  -
	  z^L \cc{z}^L
	\end{equation*}
	is an element of
	$\big(\Polynomials_\hbar(\CC^{1+n})^{\lie u_1} \big)_\Hermitian^{++} + \big( \genSId{\momentmap-\hbar(k-1)} \big)_\Hermitian$
	for all $K,L\in \NN_0^{1+n}$ with $\abs{K} = \abs{L} = k$ and all $m\in \{0,1,2,3\}$.
\end{proof}
This essentially proves Theorem~\ref{theorem:main} for momenta $\mu \in \set{\hbar k}{k \in \NN_0}$. In more detail, we have:

\begin{proof}[of the main Theorem~\ref{theorem:main}]
  Let $\hbar \in {]0,\infty[}$ be given, then we have to show that the inclusion
  $\R_{\hbar,\mu} \subseteq ( \Polynomials_\hbar(\CC^{1+n})^{\lie u_1})^{++}_\Hermitian + (\genSId{\momentmap-\mu})_\Hermitian$ holds:
  
  First assume that $\mu \in {[0,\infty[} \setminus \set{\hbar k}{k\in \NN_0}$. In this case, Proposition~\ref{proposition:minusOne} applies and shows that
  $-\Unit \in ( \Polynomials_\hbar(\CC^{1+n})^{\lie u_1})^{++}_\Hermitian + (\genSId{\momentmap-\mu})_\Hermitian$,
  so $( \Polynomials_\hbar(\CC^{1+n})^{\lie u_1})^{++}_\Hermitian + (\genSId{\momentmap-\mu})_\Hermitian = (\Polynomials_\hbar(\CC^{1+n})^{\lie u_1})_\Hermitian$,
  which certainly contains $\R_{\hbar,\mu}$.
  Now consider the case $\mu = \hbar k$, $k\in \NN_0$. By Proposition~\ref{proposition:kerR},
  the $^*$\=/ideal $\supp_\CC \R_{\hbar,\hbar k}$ is generated by $\{ \momentmap - \hbar k \Unit \} \cup \Polynomials^{k+1,k+1}(\CC^{1+n})$,
  so Proposition~\ref{proposition:kplus1homogeneous} shows that
  \begin{equation*}
    \supp_\CC \R_{\hbar,\hbar k} \subseteq \supp_\CC \!\Big( \big(\Polynomials_\hbar(\CC^{1+n})^{\lie u_1} \big)_\Hermitian^{++} + \big( \genSId{\momentmap-\hbar k} \big)_\Hermitian \Big)
    \punkt
  \end{equation*}
  By Proposition~\ref{proposition:trivial}, this implies $\R_{\hbar,\hbar k} \subseteq (\Polynomials_\hbar(\CC^{1+n})^{\lie u_1} )_\Hermitian^{++} + ( \genSId{\momentmap-\hbar k} )_\Hermitian$.
\end{proof}

\section{Wick Star Product on \texorpdfstring{$\CC\PP^n$}{CPn}} \label{sec:application}

In this section we briefly recall the construction of a deformation quantization of 
$\CC\PP^n$ from \cite{bordemann.brischle.emmrich.waldmann:PhaseSpaceReductionForStarProducts.ExplicitConstruction, bordemann.brischle.emmrich.waldmann:SubalgebrasWithConvergingStarProducts} by reduction of the Wick star product on $\CC^{1+n}$,
keeping $\mu = 1$ fixed and varying $\hbar \in \RR$.
We then determine the $^*$\=/representation theory of the quantized polynomial $^*$\=/algebra on $\CC\PP^n$ that one obtains this way.

Recall that elements of the reduction $\Polynomials_0(\CC^{1+n})_{1\mred}$ can be identified with 
polynomial functions on the real algebraic set $\CC\PP^n$, see e.g.~\cite[Sec.~6]{schmitt.schoetz:preprintSymmetryReductionOfStatesI} for details: 
Any $f \in \Polynomials_0(\CC^{1+n})^{\lie u_1}$ defines a map $\Psi_0(f) \colon \CC\PP^n \to \CC$, $[w] \mapsto \Psi_0(f)([w]) \coloneqq f(w)$,
where $w \in \Levelset_1$ is any representative of $[w] \in \CC\PP^n$ with $\momentmap(w) = 1$. 
The resulting space of polynomials on $\CC\PP^n$ is
$\Polynomials(\CC\PP^n) \coloneqq \set{\Psi_0(f)}{f\in \Polynomials_0(\CC^{1+n})^{\lie u_1} }$. 
The kernel of the map $\Psi_0 \colon \Polynomials_0(\CC^{1+n})^{\lie u_1} \to \Polynomials(\CC\PP^n)$ is just the $^*$\=/ideal $\genSId{\momentmap-1}$
in $\Polynomials_0(\CC^{1+n})^{\lie u_1}$, i.e.~with respect to the pointwise product.
By deforming $\Psi_0$ one can retain this relation for almost all values of $\hbar$ and thus construct a product on $\Polynomials(\CC\PP^n)$ with a rational dependence on $\hbar$:
For $\hbar \in \RR \setminus \{0\}$ define $\Psi_\hbar \colon \Polynomials_\hbar(\CC^{1+n})^{\lie u_1} \to \Polynomials(\CC\PP^n)$,
\begin{equation}
  f \mapsto \Psi_\hbar(f) \coloneqq \sum_{k=0}^\infty \hbar^k \falling[\Big]{\frac 1 \hbar}{k} \Psi_0(f_k)
\end{equation}
with $f_k \in \Polynomials^{k,k}(\CC^{1+n})$ the homogeneous components of $f\in \Polynomials_\hbar(\CC^{1+n})^{\lie u_1}$.
\begin{lemma} \label{lemma:cpnProduct}
	Let $\hbar \in \RR \setminus \big(\{0\} \cup \set[\big]{\frac 1 k}{k \in \NN} \big)$.
	Then the kernel of $\Psi_\hbar$ is precisely the $^*$\=/ideal $\genSId{\momentmap - 1}$ of $\Polynomials_\hbar(\CC^{1+n})^{\lie u_1}$,
	and $\Psi_\hbar$ is a surjective map and fulfils $\Psi_\hbar(f^*)([w]) = \cc{\Psi_\hbar(f)([w])}$ for all $f\in \Polynomials_\hbar(\CC^{1+n})^{\lie u_1}$,  $[w]\in\CC\PP^n$.
\end{lemma}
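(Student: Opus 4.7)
My plan is to prove the three claims—the $^*$-compatibility, surjectivity of $\Psi_\hbar$, and $\ker \Psi_\hbar = \genSId{\momentmap - 1}$—by exploiting the factorization $\Psi_\hbar = \Psi_0 \circ T_\hbar$ on the common underlying vector space $\bigoplus_\ell \Polynomials^{\ell,\ell}(\CC^{1+n})$, where $T_\hbar$ is the diagonal linear map scaling the $\ell$-th summand by $\hbar^\ell \falling{1/\hbar}{\ell}$. The hypothesis $\hbar \notin \{0\} \cup \set{1/k}{k \in \NN}$ is precisely the statement that $1/\hbar \notin \NN_0$, which in turn guarantees $\falling{1/\hbar - m}{j} \ne 0$ for all $m, j \in \NN_0$; this both makes $T_\hbar$ invertible and renders every coefficient produced by Lemma~\ref{lemma:moduloSId1} nonzero.

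The $^*$-compatibility is a short check: pointwise conjugation preserves bidegree, so $(f^*)_\ell = \cc{f_\ell}$, and combined with the reality of $\hbar^\ell \falling{1/\hbar}{\ell}$ and the analogous intertwining for $\Psi_0$, the claim drops out. For the easy inclusion $\genSId{\momentmap - 1} \subseteq \ker \Psi_\hbar$, centrality of $\momentmap$ in $\Polynomials_\hbar(\CC^{1+n})^{\lie u_1}$ reduces the task to showing $\Psi_\hbar\big((\momentmap - 1) \star_\hbar g\big) = 0$ for homogeneous $g \in \Polynomials^{\ell,\ell}(\CC^{1+n})$. A direct expansion via \eqref{eq:def:wickproduct} gives $(\momentmap - 1) \star_\hbar g = \momentmap g + (\hbar\ell - 1) g$, and applying $\Psi_\hbar$ while exploiting $\Psi_0(\momentmap g) = \Psi_0(g)$ (since $\momentmap$ restricts to $1$ on $\Levelset_1$) yields $\hbar^\ell \falling{1/\hbar}{\ell}\big[\hbar(1/\hbar - \ell) + \hbar\ell - 1\big] \Psi_0(g) = 0$. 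Surjectivity of $\Psi_\hbar$ is then free: given $g \in \Polynomials(\CC\PP^n)$, I pick a $\Psi_0$-preimage $f_0$ (guaranteed by Proposition~\ref{proposition:classicalReduction}) and note that $T_\hbar^{-1}(f_0)$ is a $\Psi_\hbar$-preimage.

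For the remaining inclusion $\ker \Psi_\hbar \subseteq \genSId{\momentmap - 1}$, I fix for each $\ell \ge 0$ a linear complement $H_\ell$ to $\momentmap \cdot \Polynomials^{\ell-1,\ell-1}(\CC^{1+n})$ inside $\Polynomials^{\ell,\ell}(\CC^{1+n})$ (with $\Polynomials^{-1,-1}(\CC^{1+n}) \coloneqq \{0\}$) and set $H \coloneqq \bigoplus_i H_i$. Iterating the decomposition $\Polynomials^{\ell,\ell}(\CC^{1+n}) = H_\ell \oplus \momentmap \Polynomials^{\ell-1,\ell-1}(\CC^{1+n})$ gives $\Polynomials_0(\CC^{1+n})^{\lie u_1} = \CC[\momentmap] \otimes H$ as vector spaces, so Proposition~\ref{proposition:classicalReduction} entails that $\Psi_0|_H$ is a linear bijection onto $\Polynomials(\CC\PP^n)$, and hence so is $\Psi_\hbar|_H = \Psi_0 \circ T_\hbar|_H$ (as $T_\hbar$ preserves each $H_i$ and is invertible). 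Meanwhile Lemma~\ref{lemma:moduloSId1} with $\mu = 1$ gives $\momentmap^j h \equiv \hbar^j \falling{1/\hbar - i}{j}\, h \pmod{\genSId{\momentmap - 1}}$ for $h \in H_i$, with nonvanishing scalar by hypothesis. Thus every $f \in \Polynomials_\hbar(\CC^{1+n})^{\lie u_1}$ reduces modulo $\genSId{\momentmap - 1}$ to some $\tilde f \in H$; if $f \in \ker \Psi_\hbar$, then $\Psi_\hbar(\tilde f) = 0$ forces $\tilde f = 0$ by injectivity of $\Psi_\hbar|_H$, whence $f \in \genSId{\momentmap - 1}$.

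The main obstacle, and the point at which the hypothesis on $\hbar$ is essential, is recognising that the purely classical graded complement $H$ remains a linear complement to $\genSId{\momentmap - 1}$ also in the $\star_\hbar$-picture. Lemma~\ref{lemma:moduloSId1} is precisely the bridge that carries the classical transversality over: the omitted values $\hbar \in \{0\} \cup \set{1/k}{k \in \NN}$ are exactly those at which some coefficient $\hbar^j \falling{1/\hbar - i}{j}$ vanishes, collapsing the bridge and breaking the transversality.
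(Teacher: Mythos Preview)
Your proof is correct. The treatment of the $^*$\=/compatibility, the inclusion $\genSId{\momentmap-1}\subseteq\ker\Psi_\hbar$, and the surjectivity all match the paper essentially verbatim; the factorisation $\Psi_\hbar=\Psi_0\circ T_\hbar$ is exactly how the paper phrases ``$\Psi_\hbar$ and $\Psi_0$ coincide up to a non-zero scalar factor on homogeneous polynomials''.

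Where you diverge is in the hard inclusion $\ker\Psi_\hbar\subseteq\genSId{\momentmap-1}$. The paper avoids your abstract complement $H$: given $f=\sum_{k=0}^d f_k\in\ker\Psi_\hbar$, it explicitly writes down the single homogeneous element
\[
  g \coloneqq \sum_{k=0}^d \hbar^{k-d}\Bigl(\falling[\big]{\tfrac1\hbar-k}{d-k}\Bigr)^{-1}\momentmap^{\,d-k}f_k \in \Polynomials^{d,d}(\CC^{1+n}),
\]
uses Lemma~\ref{lemma:moduloSId1} to get $g-f\in\genSId{\momentmap-1}$, and then invokes the simple observation that the pointwise ideal $\ker\Psi_0=\genSId{\momentmap-1}_0$ contains no nonzero homogeneous element, forcing $g=0$. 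Your route instead fixes once and for all a graded complement $H$ to $\momentmap\cdot\Polynomials(\CC^{1+n})^{\lie u_1}$, argues that $\Psi_\hbar|_H$ is a bijection (this is where invertibility of $T_\hbar$ enters), and reduces modulo $\genSId{\momentmap-1}$ into $H$. Both arguments rest on Lemma~\ref{lemma:moduloSId1} in the same way; the paper's version is shorter and needs no choice of complement, while yours makes explicit the structural fact that any such $H$ is simultaneously transversal to $\genSId{\momentmap-1}$ for the pointwise product and for $\star_\hbar$. One small expositional point: in your final paragraph you locate the role of the nonvanishing scalars in the reduction step, but in fact that step works regardless---the nonvanishing is really only needed to make $T_\hbar$ (hence $\Psi_\hbar|_H$) injective.
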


\begin{proof}
	We have $\genSId{\momentmap - 1} \subseteq \ker \Psi_\hbar$ since 
	\begin{equation*}
	\Psi_\hbar\big((\momentmap - \Unit) \star_\hbar z^K \cc z^L\big) 
	=
	\Psi_\hbar\big(\momentmap z^K \cc z^L - (1 -\hbar \abs K) z^K \cc z^L\big)
	=
	\hbar^{\abs K+1} \falling[\Big]{\frac 1 \hbar}{\abs K+1} \Psi_0\big( (\momentmap - \Unit) z^K \cc z^L \big)
	=
	0
	\end{equation*}
	holds for all $K, L \in \mathbb N_0$ with $\abs K = \abs L$.
	Now consider $f = \sum_{k=0}^d f_k \in \ker \Psi_\hbar$, with $f_k \in 
	\Polynomials^{k,k}(\CC^{1+n})$ the homogeneous components of $f$ and $d\in \NN_0$. 
	Define
	\begin{equation*}
	g \coloneqq \sum_{k=0}^d \hbar^{k-d} \Big(\falling[\Big]{\frac 1 \hbar - k}{d-k}\Big)^{-1} 
	\momentmap^{d-k} f_k \in 
	\Polynomials^{d,d} (\CC^{1+n}) \punkt
	\end{equation*} 
	It follows from Lemma~\ref{lemma:moduloSId1} that $g-f \in \genSId{\momentmap - 1}$,
	hence $g = (g-f) + f \in \ker \Psi_\hbar$. But on homogeneous polynomials, the maps 
	$\Psi_\hbar$ and $\Psi_0$ coincide up to a non-zero scalar factor, so $\Psi_0(g)=0$.
	Therefore $g = 0$ by homogeneity, and $f \in \genSId{\momentmap - 1}$ holds.
	This shows that $\ker \Psi_\hbar = \genSId{\momentmap-1}$.
	
	Surjectivity of $\Psi_\hbar$ is clear because since $\Psi_\hbar$ and $\Psi_0$ coincide up to a non-zero scalar factor on all homogeneous polynomials,
	and an easy calculation shows that $\Psi_\hbar(f^*)([w]) = \cc{\Psi_\hbar(f)([w])}$ for all $f\in \Polynomials_\hbar(\CC^{1+n})^{\lie u_1}$, $[w]\in\CC\PP^n$.
\end{proof}
As a consequence one can endow $\Polynomials(\CC\PP^n)$ with a reduced (rational) star product:
\begin{definition}
  For all $\hbar \in \RR \setminus \set{\frac1 k}{k\in \NN}$, define the $^*$\=/algebra
  $\Polynomials_\hbar(\CC\PP^n)$ as the vector space $\Polynomials(\CC\PP^n)$ with product
  $\star_{\red,\hbar} \colon \Polynomials(\CC\PP^n) \times \Polynomials(\CC\PP^n) \to \Polynomials(\CC\PP^n)$,
	\begin{equation}
    \big(\Psi_\hbar(f) , \Psi_\hbar(g) \big) \mapsto \Psi_\hbar(f) \star_{\red,\hbar} \Psi_\hbar(g) \coloneqq \Psi_\hbar(f \star_\hbar g)
	\end{equation}
  and with pointwise complex conjugation as $^*$\=/involution.
\end{definition}
Note that the only $\hbar \in \RR \setminus \set{\frac1 k}{k\in \NN}$ for which the $^*$\=/algebras $\Polynomials_\hbar(\CC\PP^n)$ and $\Polynomials_\hbar(\CC^{1+n})_{1\mred}$
are isomorphic is $\hbar = 0$.
In the following we will determine the $^*$\=/representations of $\Polynomials_\hbar(\CC\PP^n)$
for $\hbar \in \RR \setminus \big( \{0\} \cup \set{\frac{1}{k}}{k\in \NN}\big)$ by classifying all its quadratic modules.

If $\hbar \in {]0, \infty[} \setminus \set[\big]{\frac 1 k}{k \in \NN}$, then it follows immediately 
from Proposition~\ref{proposition:minusOne} that any $^*$\=/representation of $\Polynomials_\hbar(\CC\PP^n)$ must be trivial.
The following lemma can be used to transfer this to negative values of $\hbar$:

\begin{lemma} \label{lemma:signIsomorphism}
	For $\hbar \in {]0,\infty[}$ the map
	$\IsomSign{\hbar} \colon \Polynomials_\hbar(\CC^{1+n}) \to \Polynomials_{-\hbar}(\CC^{1+n})$,
	\begin{equation} \label{eq:isomDifferentSign}
	f
	\mapsto 
	\IsomSign{\hbar}(f)
	\coloneqq 
	\bigg(
	\exp\Big(
	{-}\hbar \sum\nolimits_{j=0}^n \frac{\partial^2}{\partial z_j \partial \cc z_j}
	\Big) f
	\bigg) \circ \overline{\argument} \komma
	\end{equation}
	where $\cc\argument \colon \CC^{1+n} \to \CC^{1+n}$ denotes the componentwise complex conjugation, is
	a $^*$\=/isomorphism, i.e.~$\IsomSign{\hbar}$ is a linear bijection that fulfils
	\begin{equation}
	  \IsomSign{\hbar}(f\star_\hbar g) = \IsomSign{\hbar}(f) \star_{-\hbar} \IsomSign{\hbar}(g)
	  \quad\quad\text{and}\quad\quad
	  \IsomSign{\hbar}(f^*) = \IsomSign{\hbar}(f)^*
	\end{equation}
  for all $f,g\in \Polynomials(\CC^{1+n})$.
\end{lemma}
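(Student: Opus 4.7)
The plan is to decompose $\IsomSign{\hbar} = \sigma \circ T_\hbar$, where $T_\hbar \coloneqq \exp(-\hbar \Delta)$ with $\Delta \coloneqq \sum_{j=0}^n \partial_{z_j}\partial_{\cc{z}_j}$ acting on polynomials, and $\sigma \colon f \mapsto f \circ \cc{\argument}$ is the pullback by componentwise complex conjugation. Both are well-defined linear endomorphisms of $\Polynomials(\CC^{1+n})$ (the exponential series terminates on any polynomial because $\Delta$ strictly lowers total degree), so linearity of $\IsomSign{\hbar}$ is immediate. For bijectivity, note that $T_\hbar T_{-\hbar} = \id$ and $\sigma^2 = \id$, and that $\sigma$ interchanges $\partial_{z_j}$ and $\partial_{\cc{z}_j}$ (i.e.~$\partial_{z_j}\circ \sigma = \sigma \circ \partial_{\cc{z}_j}$), so $\sigma$ commutes with $\Delta$ and hence with each $T_\hbar$. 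A short calculation then yields $\IsomSign{-\hbar} \circ \IsomSign{\hbar} = \id = \IsomSign{\hbar} \circ \IsomSign{-\hbar}$. For $^*$\=/compatibility, $\Delta$ has real coefficients and commutes with $f\mapsto f^*$, so $T_\hbar(f^*) = (T_\hbar f)^*$; likewise $\sigma(f^*) = (\sigma f)^*$ is immediate from $\sigma(f)^*(w) = \cc{f(\cc{w})} = f^*(\cc{w}) = \sigma(f^*)(w)$. Combining these gives $\IsomSign{\hbar}(f^*) = \IsomSign{\hbar}(f)^*$.

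The substantive point is multiplicativity. Writing the Wick products as $f \star_\hbar g = m \circ \exp(\hbar P)(f \otimes g)$ and $f \star_{-\hbar} g = m \circ \exp(-\hbar P)(f \otimes g)$, with $m$ pointwise multiplication and $P \coloneqq \sum_j \partial_{\cc{z}_j} \otimes \partial_{z_j}$, and introducing also $Q \coloneqq \sum_j \partial_{z_j} \otimes \partial_{\cc{z}_j}$, Leibniz applied to $\Delta$ on products gives
\begin{equation*}
  \Delta \circ m = m \circ (\Delta \otimes 1 + 1 \otimes \Delta + P + Q) \punkt
\end{equation*}
Since the four summands on the right are pairwise commuting constant-coefficient bidifferential operators, exponentiation yields the key identity
\begin{equation*}
  T_\hbar \circ m = m \circ (T_\hbar \otimes T_\hbar) \circ \exp(-\hbar(P+Q)) \punkt
\end{equation*}
On the other hand, using $\partial_{z_j}\circ \sigma = \sigma \circ \partial_{\cc{z}_j}$ and $\partial_{\cc{z}_j}\circ \sigma = \sigma \circ \partial_{z_j}$, a direct computation shows $\sigma F \star_{-\hbar} \sigma G = \sigma \circ m \circ \exp(-\hbar Q)(F \otimes G)$ for all $F, G \in \Polynomials(\CC^{1+n})$. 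Substituting $F = T_\hbar f$ and $G = T_\hbar g$, commuting $\exp(-\hbar Q)$ past $T_\hbar \otimes T_\hbar$, and invoking the key identity yields
\begin{equation*}
  \IsomSign{\hbar}(f) \star_{-\hbar} \IsomSign{\hbar}(g)
  = \sigma \circ m \circ (T_\hbar \otimes T_\hbar) \circ \exp(-\hbar Q)(f\otimes g)
  = \sigma \circ T_\hbar \circ m \circ \exp(\hbar P)(f\otimes g)
  = \IsomSign{\hbar}(f \star_\hbar g) \punkt
\end{equation*}

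The main obstacle I expect is the bookkeeping: one must keep straight which derivatives act on which tensor factor, in which sign of $\hbar$ they appear, and how $\sigma$ swaps them. Once $P$, $Q$, $\Delta$ have been identified as pairwise-commuting constant-coefficient operators, however, the computation reduces to manipulations of a commuting exponential family, and each of the three required identities becomes essentially a one-line verification.
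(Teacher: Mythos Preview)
Your proof is correct and follows essentially the same route as the paper: both factor $\IsomSign{\hbar} = \sigma \circ \vartheta_\hbar$ (your $T_\hbar$), use that $\sigma$ interchanges $\partial_{z_j}$ and $\partial_{\cc z_j}$, and reduce multiplicativity to the fact that $\vartheta_\hbar$ intertwines the Wick product $\star_\hbar$ with the anti-Wick product $m\circ\exp(-\hbar Q)$. The only difference is that the paper quotes this last equivalence from \cite{waldmann:ANuclearWeylAlgebra}, whereas you derive it directly from the Leibniz identity $\Delta\circ m = m\circ(\Delta\otimes 1 + 1\otimes\Delta + P + Q)$ and exponentiation of commuting operators; your version is thus self-contained but otherwise identical in structure.
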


\begin{proof}
  The map
  $\Polynomials(\CC^{1+n}) \ni f \mapsto 
  \vartheta_\hbar(f) \coloneqq \exp\big({-\hbar} \sum_{j=0}^n \frac{\partial^2}{\partial z_j 
  \partial \cc z_j}\big) f
  \in\Polynomials(\CC^{1+n})$
  fulfils $\IsomSign{\hbar}(f) = \vartheta_\hbar(f) \circ \cc{\argument}$ for all $f\in \Polynomials(\CC^{1+n})$.
  Now define the ``anti-Wick product''
  \begin{equation*}
    f \mathbin{\tilde{\star}_\hbar} g
    \coloneqq
    \sum_{K\in \NN_0^{1+n}} \frac{(-\hbar)^{\abs{K}}}{K!} \frac{\partial^{\abs{K}} f}{\partial z^K} \frac{\partial^{\abs{K}} g}{\partial \cc{z}^K}
    \in
    \Polynomials(\CC^{1+n})
  \end{equation*}
  for all $f,g\in \Polynomials(\CC^{1+n})$. It is well-known that the ``equivalence transformation'' $\vartheta_\hbar$ fulfils
  $\vartheta_\hbar(f\star_\hbar g) = \vartheta_\hbar(f) \mathbin{\tilde{\star}_\hbar} \vartheta_\hbar(g)$
  for all $f,g\in \Polynomials(\CC^{1+n})$, see 
  e.g.~\cite[Prop.~2.18]{waldmann:ANuclearWeylAlgebra}. This in combination with the identity
  $\frac{\partial}{\partial z_j}(f\circ \cc{\argument}) = (\frac{\partial f}{\partial \cc{z}_j}) 
  \circ \cc{\argument}$
  for $f\in \Polynomials(\CC^{1+n})$, $j\in \{0,\dots,n\}$ yields
  $\IsomSign{\hbar}(f\star_\hbar g) = \IsomSign{\hbar}(f) \star_{-\hbar} \IsomSign{\hbar}(g)$
  for all $f,g\in \Polynomials(\CC^{1+n})$. Checking that
  $\IsomSign{\hbar}(f^*) = \IsomSign{\hbar}(f)^*$ for all $f\in \Polynomials(\CC^{1+n})$ is straightforward.
\end{proof}

\begin{proposition}
	If 
	$\hbar \in \RR \setminus \big(
		\{0\} 
		\cup 
		\set[\big]{\frac 1 k}{k \in \NN} 
		\cup 
		\set[\big]{-\frac 1 {1+n+k}}{k \in \NN_0}
	\big)$,
	then $-\Unit \in \Polynomials_\hbar(\CC\PP^n)^{++}_\Hermitian$, so every $^*$\=/representation of $\Polynomials_\hbar(\CC\PP^n)$ 
	on a pre-Hilbert space is trivial, i.e.\ the zero representation.
\end{proposition}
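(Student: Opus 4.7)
The plan is to lift the problem from $\Polynomials_\hbar(\CC\PP^n)$ to $\Polynomials_\hbar(\CC^{1+n})^{\lie u_1}$, establish
\[
  -\Unit \in \big(\Polynomials_\hbar(\CC^{1+n})^{\lie u_1}\big)^{++}_\Hermitian + \big(\genSId{\momentmap-1}\big)_\Hermitian
\]
there, and then transport this identity down via the surjective $^*$\=/homomorphism $\Psi_\hbar$, whose kernel is exactly $\genSId{\momentmap-1}$ by Lemma~\ref{lemma:cpnProduct} (applicable since the hypothesis excludes $\{0\} \cup \set{1/k}{k\in\NN}$). The sign of $\hbar$ then splits the argument into two cases, the positive case being an essentially immediate application of Proposition~\ref{proposition:minusOne} and the negative case requiring a transport through the equivalence $\IsomSign{}$ of Lemma~\ref{lemma:signIsomorphism}.

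For $\hbar \in {]0,\infty[} \setminus \set{1/k}{k\in\NN}$, apply Proposition~\ref{proposition:minusOne} with $\mu = 1$ directly. The hypothesis $\hbar \neq 1/k$ for all $k\in\NN$ rules out $1 \in \set{\hbar k}{k\in\NN_0}$, and $1 > 0 \geq -\hbar(1+n+k)$ automatically rules out the second forbidden set, so the proposition yields the required representation at once.

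For $\hbar \in {]-\infty,0[} \setminus \set{-1/(1+n+k)}{k\in\NN_0}$, set $\hbar' \coloneqq -\hbar > 0$ and use the $^*$\=/isomorphism $\IsomSign{\hbar'} \colon \Polynomials_{\hbar'}(\CC^{1+n}) \to \Polynomials_\hbar(\CC^{1+n})$. Since $\IsomSign{\hbar'}$ is composed of the operator $\vartheta_{\hbar'}$ built from $\partial_{z_j}\partial_{\cc z_j}$ (which preserves holomorphic minus antiholomorphic degree) and componentwise complex conjugation (which swaps the two degrees), it restricts to a $^*$\=/isomorphism between the $\group{U}(1)$-invariant subalgebras. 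A short computation using $\sum_{j=0}^n \partial_{z_j}\partial_{\cc z_j} \momentmap = n+1$ and the invariance of $\momentmap$ under $\cc{\argument}$ gives $\IsomSign{\hbar'}(\momentmap - \mu'\Unit) = \momentmap - (\mu' + (n+1)\hbar')\Unit$, and choosing $\mu' \coloneqq 1 - (n+1)\hbar' = 1 + (n+1)\hbar$ makes this equal to $\momentmap - \Unit$. It then has to be verified that $\mu'$ is admissible in Proposition~\ref{proposition:minusOne} at $\hbar'$: an equality $\mu' = \hbar' k$ rearranges to $\hbar' = 1/(k+n+1)$, i.e.\ $\hbar = -1/(k+n+1)$, which the hypothesis excludes, whereas $\mu' = -\hbar'(1+n+k)$ forces $\hbar' = -1/k$ (or $1=0$ if $k=0$), which is impossible for $\hbar' > 0$. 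Applying Proposition~\ref{proposition:minusOne} on the $\hbar'$-side and pushing through $\IsomSign{\hbar'}$ then yields the desired identity in $\Polynomials_\hbar(\CC^{1+n})^{\lie u_1}$, sums of Hermitian squares being preserved because $\IsomSign{\hbar'}$ is a $^*$\=/isomorphism and the $^*$\=/ideal $\genSId{\momentmap-\mu'}_{\star_{\hbar'}}$ mapping onto $\genSId{\momentmap-1}_{\star_\hbar}$.

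Pushing this identity through $\Psi_\hbar$ gives $-\Unit \in \Polynomials_\hbar(\CC\PP^n)^{++}_\Hermitian$. For any $^*$\=/representation $\pi$ of $\Polynomials_\hbar(\CC\PP^n)$ on a pre-Hilbert space $\Dom$, unitality forces $-\id_\Dom = \pi(-\Unit)$ to be a sum of Hermitian squares of operators, hence positive, so $-\skal{\psi}{\psi}\ge 0$ for every $\psi\in\Dom$ and thus $\Dom = \{0\}$. The main obstacle is purely bookkeeping: getting the shift $\mu \leftrightarrow 1 - (n+1)\hbar'$ produced by $\IsomSign{\hbar'}$ to mesh exactly with the two forbidden arithmetic progressions in the hypothesis; once that is in place the rest is a routine chain of $^*$\=/algebra maps.
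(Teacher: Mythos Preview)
Your proof is correct and follows essentially the same route as the paper: for $\hbar>0$ apply Proposition~\ref{proposition:minusOne} with $\mu=1$ directly, and for $\hbar<0$ set $\mu'=1-(n+1)\abs{\hbar}$, apply Proposition~\ref{proposition:minusOne} at $\abs{\hbar}$, and transport via $\IsomSign{\abs{\hbar}}$, then push everything through $\Psi_\hbar$. The paper's proof is a bit more terse (it does not spell out the admissibility checks for $\mu'$ or the restriction of $\IsomSign{}$ to the invariant subalgebra), but the argument is the same.
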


\begin{proof}
	If $\hbar \in {]0, \infty[} \setminus \set[\big]{\frac 1 k}{k \in \NN}$,
	then we have seen in Proposition~\ref{proposition:minusOne} that $-\Unit \in 
	\Polynomials_\hbar(\CC^{1+n})^{\lie u_1}$ is, up to a contribution of 
	$(\genSId{\momentmap-1})_\Hermitian$, a sum of Hermitian squares.
	So $\Psi_\hbar(-\Unit) = -\Unit \in \Polynomials_\hbar(\CC\PP^n)^{++}_\Hermitian$.
	
	If $\hbar \in {]{-\infty}, 0[} \setminus \set{-\frac 1 {1+n+k}}{k \in \NN_0}$, then 
	$\mu \coloneqq 1 - \abs \hbar(1+n)$ and $\abs \hbar$ fulfil the 
	assumptions of 
	Proposition~\ref{proposition:minusOne}.
	Therefore $-\Unit \in (\Polynomials_{\abs \hbar}(\CC^{1+n})^{\lie u_1})^{++}_\Hermitian 
	+ 
	(\genSId{\momentmap - \mu })_\Hermitian$ holds. Applying the $^*$\=/isomorphism $\IsomSign{\abs{\hbar}}$ from 
	the previous Lemma~\ref{lemma:signIsomorphism} and noting that 
	$\IsomSign{\abs{\hbar}}(\momentmap) = \momentmap - \abs{\hbar} (1+n) \Unit$,
	we find that 
	$-\Unit \in (\Polynomials_{\hbar}(\CC^{1+n})^{\lie u_1})^{++}_\Hermitian 
	+ 
	(\genSId{\momentmap - 1})_\Hermitian$. So again $\Psi_\hbar(-\Unit) = -\Unit \in \Polynomials_\hbar(\CC\PP^n)^{++}_\Hermitian$.
\end{proof}

\begin{proposition} If $\hbar = -\frac 1 {1+n+k}$, $k \in \NN_0$, then
  there exists only one quadratic module $\Q_\hbar$ on $\Polynomials_\hbar(\CC\PP^n)$
  with $-\Unit \notin \Q_\hbar$, and $\Polynomials_\hbar(\CC\PP^n) / \supp_\CC \Q_\hbar$
  is a finite dimensional C$^*$\=/algebra isomorphic to the matrix $^*$\=/algebra 
  $\CC^{d_{n,k} \times d_{n,k}}$ with $d_{n,k} = \binom{n+k}{k}$.
\end{proposition}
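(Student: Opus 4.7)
The plan is to translate the negative-$\hbar$ setting to the positive one via the $^*$-isomorphism $\IsomSign{\abs{\hbar}}$ of Lemma~\ref{lemma:signIsomorphism}, and then apply Theorem~\ref{theorem:main} together with Corollary~\ref{corollary:neu}. With $\abs{\hbar} = 1/(1+n+k)$ we have $\IsomSign{\abs{\hbar}}(\momentmap) = \momentmap - \abs{\hbar}(1+n)\Unit$, so $\IsomSign{\abs{\hbar}}^{-1}$ sends $\momentmap - \Unit$ to $\momentmap - (1-\abs{\hbar}(1+n))\Unit = \momentmap - \abs{\hbar}k\,\Unit$. After checking that $\IsomSign{\abs{\hbar}}$ is equivariant for the diagonal $\group{U}(1)$-action and transports the $\star_\hbar$-generated ideal $\genSId{\momentmap - 1}$ onto the $\star_{\abs{\hbar}}$-generated ideal $\genSId{\momentmap - \abs{\hbar}k}$, Lemma~\ref{lemma:cpnProduct} yields a $^*$-algebra isomorphism
\begin{equation*}
  \Polynomials_\hbar(\CC\PP^n) \;\cong\; \Polynomials_{\abs{\hbar}}(\CC^{1+n})^{\lie u_1}\big/ \genSId{\momentmap - \abs{\hbar}k}\,.
\end{equation*}

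Next I would define $\Q_\hbar$ on $\Polynomials_\hbar(\CC\PP^n)$ as the image of $\R_{\abs{\hbar},\abs{\hbar}k}$ under this isomorphism (after passing to the quotient). By Theorem~\ref{theorem:main}, $\R_{\abs{\hbar},\abs{\hbar}k} = (\Polynomials_{\abs{\hbar}}(\CC^{1+n})^{\lie u_1})^{++}_\Hermitian + (\genSId{\momentmap-\abs{\hbar}k})_\Hermitian$, so $\Q_\hbar$ coincides with the sum-of-Hermitian-squares cone of $\Polynomials_\hbar(\CC\PP^n)$. By Corollary~\ref{corollary:neu}, $\Polynomials_{\abs{\hbar}}(\CC^{1+n})^{\lie u_1}/\supp_\CC \R_{\abs{\hbar},\abs{\hbar}k}$ is isomorphic as an ordered $^*$-algebra to $\Adbar(\Polynomials^{k,0}(\CC^{1+n})) \cong \CC^{d_{n,k}\times d_{n,k}}$ with the positive-semidefinite cone as its quadratic module, and transporting back gives
\begin{equation*}
  \Polynomials_\hbar(\CC\PP^n)\big/\supp_\CC \Q_\hbar \;\cong\; \CC^{d_{n,k}\times d_{n,k}}
\end{equation*}
as ordered $^*$-algebras. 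In particular $-\Unit \notin \Q_\hbar$, establishing existence.

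For uniqueness, let $\Q$ be any quadratic module on $\Polynomials_\hbar(\CC\PP^n)$ with $-\Unit \notin \Q$. Since every quadratic module contains the sum of Hermitian squares, $\Q \supseteq \Q_\hbar$, and in particular $\Q$ contains $(\supp_\CC \Q_\hbar)_\Hermitian = \supp \Q_\hbar$, so $\Q$ descends to a quadratic module $\hat\Q$ on the quotient matrix $^*$-algebra $\CC^{d_{n,k}\times d_{n,k}}$ that contains the positive-semidefinite cone and still does not contain $-\Unit$. The remaining step is the standard observation that on $\CC^{d\times d}$ the positive-semidefinite cone is the unique quadratic module missing $-\Unit$: if $a \in \hat\Q$ is Hermitian with a negative eigenvalue $-\lambda < 0$ and unit eigenvector $v$, then the rank-one projection $P = vv^*$ is positive-semidefinite and hence in $\hat\Q$, so $PaP = -\lambda P \in \hat\Q$, and conjugation by $\lambda^{-1/2}\Unit$ gives $-P \in \hat\Q$; conjugating $-P$ by unitaries $U_i$ mapping $v$ to the standard basis vectors $e_i$ yields $-e_ie_i^* \in \hat\Q$, and summing these gives $-\Unit \in \hat\Q$, a contradiction. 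Hence $\hat\Q$ equals the positive-semidefinite cone, so $\Q \subseteq \Q_\hbar + \supp \Q_\hbar = \Q_\hbar$, and combined with the reverse inclusion we conclude $\Q = \Q_\hbar$.

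The main obstacle in this plan is the bookkeeping in the first paragraph: the two ideals $\genSId{\momentmap - 1}$ and $\genSId{\momentmap - \abs{\hbar}k}$ are generated with respect to \emph{different} star products ($\star_\hbar$ and $\star_{\abs{\hbar}}$), so their identification under $\IsomSign{\abs{\hbar}}$ is not purely formal and must be verified at the level of generators. Once this alignment is established, Theorem~\ref{theorem:main} supplies the identification of the positive cone with the sum of Hermitian squares plus the ideal, and the matrix-algebra classification of quadratic modules is elementary.
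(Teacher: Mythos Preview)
Your proof is correct and follows essentially the same route as the paper: transport via $\IsomSign{\abs\hbar}$ to identify $\Polynomials_\hbar(\CC\PP^n)$ with $\Polynomials_{\abs\hbar}(\CC^{1+n})^{\lie u_1}/\genSId{\momentmap-\abs\hbar k}$, then invoke Theorem~\ref{theorem:main} and Corollary~\ref{corollary:neu}. Two remarks. First, the ``obstacle'' you flag is not one: since $\IsomSign{\abs\hbar}$ is a $^*$-isomorphism (Lemma~\ref{lemma:signIsomorphism}), it automatically sends the $^*$-ideal generated by an element with respect to $\star_{\abs\hbar}$ onto the $^*$-ideal generated by its image with respect to $\star_\hbar$; the only computation needed is $\IsomSign{\abs\hbar}(\momentmap-\abs\hbar k\,\Unit)=\momentmap-\Unit$, which you already did. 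Second, where the paper cites \cite{cimpric:maximalQuadraticModulesOnStarRings} for maximality of the positive-semidefinite cone among quadratic modules on $\CC^{d\times d}$, you give a self-contained eigenvector/rank-one-projection argument; this is a pleasant elementary substitute and works as written.
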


\begin{proof}
  Consider the map $\Psi_\hbar \circ \IsomSign{\abs \hbar} \colon \Polynomials_{\abs{\hbar}}(\CC^{1+n})^{\lie u_1} \to \Polynomials_\hbar(\CC\PP^n)$.
  By Lemmas~\ref{lemma:cpnProduct} and \ref{lemma:signIsomorphism}, $\Psi_\hbar \circ \IsomSign{\abs \hbar}$
  is a surjective $^*$\=/homomorphism (i.e.~linear, multiplicative, and intertwines the $^*$\=/involutions).
  Its kernel is the $^*$\=/ideal of $\Polynomials_{\abs {\hbar}}(\CC^{1+n})^{\lie u_1}$ that is generated by $\momentmap-\abs{\hbar} k\Unit$,
  because $\IsomSign{\abs \hbar}(\momentmap-\abs{\hbar} k \Unit) = \momentmap - \abs{\hbar}(1+n + k) \Unit = \momentmap-\Unit$.
  Because of this, there is a $1$-to-$1$ correspondence between quadratic modules of $\Polynomials_\hbar(\CC\PP^n)$
  and quadratic modules of $\Polynomials_{\abs{\hbar}}(\CC^{1+n})^{\lie u_1}$ that contain $(\genSId{\momentmap-\abs{\hbar}k})_\Hermitian$.
  
  It only remains to show that the quadratic module $\R_{\abs{\hbar}, \abs{\hbar} k}$ of $\Polynomials_{\abs{\hbar}}(\CC^{1+n})^{\lie u_1}$
  is the unique one that contains $(\genSId{\momentmap-\abs{\hbar}k})_\Hermitian$, but not $-\Unit$:
	As a consequence of Theorem~\ref{theorem:main}, every quadratic module of
	$\Polynomials_{\abs{\hbar}}(\CC^{1+n})^{\lie u_1}$ containing $(\genSId{\momentmap - \abs{\hbar}k})_\Hermitian$
	must contain $\R_{\abs{\hbar}, \abs{\hbar} k}$. But $\R_{\abs{\hbar}, \abs{\hbar} k}$ is also maximal
	under all quadratic modules of $\Polynomials_{\abs{\hbar}}(\CC^{1+n})^{\lie u_1}$ that do not contain $-\Unit$:
	This follows from the observation that its image under the reduction map $[\argument]_{\abs{\hbar}k} \colon \Polynomials_{\abs{\hbar}}(\CC^{1+n})^{\lie u_1} \to \Polynomials_{\abs{\hbar}}(\CC^{1+n})_{\abs{\hbar}k\mred} \cong \CC^{d_{n,k} \times d_{n,k}}$ is given by the positive-semidefinite Hermitian matrices
	in $\CC^{d_{n,k} \times d_{n,k}}$, which is a maximal quadratic module, see e.g.~\cite[Sec.~2]{cimpric:maximalQuadraticModulesOnStarRings}.
\end{proof}


\begin{thebibliography}{10}

\bibitem
  {bordemann.brischle.emmrich.waldmann:PhaseSpaceReductionForStarProducts.ExplicitConstruction}
\textsc{Bordemann, M., Brischle, M., Emmrich, C., Waldmann, S.: }\newblock
  \emph{Phase space reduction for star-products: An explicit construction for
  CP{$^n$}}.
\newblock Letters in Mathematical Physics  \textbf{36}.4 (1996), 357--371.

\bibitem
  {bordemann.brischle.emmrich.waldmann:SubalgebrasWithConvergingStarProducts}
\textsc{Bordemann, M., Brischle, M., Emmrich, C., Waldmann, S.: }\newblock
  \emph{Subalgebras with converging star products in deformation quantization:
  An algebraic construction for CP{$^n$}}.
\newblock Journal of Mathematical Physics  \textbf{37}.12 (1996), 6311--6323.

\bibitem {bordemann.roemer:totalSpaceQuantizationOfKaehlerManifolds}
\textsc{Bordemann, M., R{\"o}mer, H.: }\newblock \emph{Total Space Quantization
  of K{\"a}hler Manifolds}.
\newblock In: \emph{Classical And Quantum Systems: Foundations And
  Symmetries-Proceedings Of The 2nd International Wigner Symposium},
  338--341. World Scientific, 1993.

\bibitem {cimpric:maximalQuadraticModulesOnStarRings}
\textsc{Cimpri{\v{c}}, J.: }\newblock \emph{Maximal Quadratic Modules on
  $*$-rings}.
\newblock Algebras and Representation Theory  \textbf{11} (2008), 83--91.

\bibitem {cimpric:realAlgebraicGeometryForMatricesOverCommutativeRings}
\textsc{Cimpri\v{c}, J.: }\newblock \emph{Real algebraic geometry for matrices
  over commutative rings}.
\newblock Journal of Algebra  \textbf{359} (2012), 89--103.

\bibitem {helton:positiveNoncommutativePolynomialsAreSumsOfSquares}
\textsc{Helton, J.~W.: }\newblock \emph{"Positive" Noncommutative Polynomials
  Are Sums of Squares}.
\newblock Annals of Mathematics  \textbf{156}.2 (2002), 675--694.

\bibitem
  {helton.mcCullough.putinar:nonCommutativePositivstellensatzOnIsometries}
\textsc{Helton, W., McCullough, S.~A., Putinar, M.: }\newblock \emph{A
  non-commutative Positivstellensatz on isometries}.
\newblock Journal f\"ur die reine und angewandte Mathematik  \textbf{2004}.568
  (2004), 71--80.

\bibitem
  {hillar.nie:elementaryAndConstructiveSolutionToHilberts17thProblemForMatrices}
\textsc{Hillar, C.~J., Nie, J.: }\newblock \emph{An Elementary and Constructive
  Solution to Hilbert's 17th Problem for Matrices}.
\newblock Proceedings of the American Mathematical Society  \textbf{136}.1
  (2008), 73--76.

\bibitem
  {klep.schwaighfer:pureStatesPositiveMatrixPolynomialsAndSumsOfHermitianSquares}
\textsc{Klep, I., Schweighofer, M.: }\newblock \emph{Pure States, Positive
  Matrix Polynomials and Sums of Hermitian Squares}.
\newblock Indiana University Mathematics Journal  \textbf{59}.3 (2010),
  857--874.

\bibitem {marsden.weinstein:reductionOfSymplecticManifoldsWithSymmetry}
\textsc{Marsden, J., Weinstein, A.: }\newblock \emph{Reduction of symplectic
  manifolds with symmetry}.
\newblock Reports on Mathematical Physics  \textbf{5}.1 (1974), 121--130.

\bibitem
  {procesi.schacher:nonCommutativeRealNullstellensatzAndHilberts17thProblem}
\textsc{Procesi, C., Schacher, M.: }\newblock \emph{A Non-Commutative Real
  Nullstellensatz and Hilbert's 17th Problem}.
\newblock Annals of Mathematics  \textbf{104}.3 (1976), 395--406.

\bibitem {scheiderer:sumsOfSquaresOfRegularFunctionsOnRealAlgebraicVarieties}
\textsc{Scheiderer, C.: }\newblock \emph{Sums of squares of regular functions
  on real algebraic varieties}.
\newblock Transactions of the American Mathematical Society  \textbf{352} (2000), 1039--1069

\bibitem {scheiderer:sumsOfSquaresOnRealAlgebraicSurfaces}
\textsc{Scheiderer, C.: }\newblock \emph{Sums of squares on real algebraic
  surfaces}.
\newblock Manuscripta Mathematica  \textbf{119} (2006), 395--410.

\bibitem {schmitt.schoetz:preprintSymmetryReductionOfStatesI}
\textsc{Schmitt, P., Sch{\"o}tz, M.: }\newblock \emph{Symmetry Reduction of
  States I}.
\newblock arXiv e-prints   (2021), arXiv:2107.04900.

\bibitem {schmuedgen:KMomentProblemForCompactSemiAlgebraicSets}
\textsc{Schm{\"u}dgen, K.: }\newblock \emph{The K-moment problem for compact
  semi-algebraic sets}.
\newblock Mathematische Annalen  \textbf{289} (1991), 203--206.

\bibitem {schmuedgen:StrictPositivstellensatzForWeylAlgebra}
\textsc{Schm{\"u}dgen, K.: }\newblock \emph{A strict Positivstellensatz for the
  Weyl algebra}.
\newblock Mathematische Annalen  \textbf{331}.4 (2005), 779--794.

\bibitem {schmuedgen:StrictPositivstellensatzForEnvelopingAlgebras}
\textsc{Schm{\"u}dgen, K.: }\newblock \emph{A strict Positivstellensatz for
  enveloping algebras}.
\newblock Mathematische Zeitschrift  \textbf{254}.3 (2006), 641--653.

\bibitem {schmuedgen:nonCommutativeRealAlgebraicGeometry}
\textsc{Schm{\"u}dgen, K.: }\newblock \emph{Noncommutative Real Algebraic
  Geometry Some Basic Concepts and First Ideas}.
\newblock In: \textsc{Putinar, M., Sullivant, S. (eds.): }\newblock
  \emph{Emerging Applications of Algebraic Geometry},   325--350. Springer New
  York, 2009.

\bibitem {schmuedgen:algebrasOfFractionsAndStrictPositivstellensaetze}
\textsc{Schm\"udgen, K.: }\newblock \emph{Algebras of fractions and strict
  Positivstellensätze for *-algebras}.
\newblock Journal f\"ur die reine und angewandte Mathematik  \textbf{2010}.647
  (2010), 57--86.

\bibitem {schmuedgen:invitationToStarAlgebras}
\textsc{Schm{\"u}dgen, K.: }\newblock \emph{An Invitation to Unbounded
  Representations of *-Algebras on Hilbert Space}.
\newblock Springer, 2020.

\bibitem {schoetz:PreprintUniversalContinuousCalculusForSuStarAlgebras}
\textsc{{Sch{\"o}tz}, M.: }\newblock \emph{{Universal Continuous Calculus for
  Su*-Algebras}}.
\newblock arXiv e-prints   (2019), arXiv:1901.04076. Accepted for publication in Mathematische Nachrichten

\bibitem {schoetz:equivalenceOrderAlgebraicStructure}
\textsc{Sch{\"o}tz, M.: }\newblock \emph{Equivalence of order and algebraic
  properties in ordered *-algebras}.
\newblock Positivity  \textbf{25} (2021), 883--909.

\bibitem {waldmann:ANuclearWeylAlgebra}
\textsc{Waldmann, S.: }\newblock \emph{A nuclear Weyl algebra}.
\newblock J.\ Geom.\ Phys.  \textbf{81} (2014), 10--46.

\bibitem {zalar:operatorPostivistellensaetzeForNoncommutativePolynomials}
\textsc{Zalar, A.: }\newblock \emph{Operator Positivstellensätze for
  noncommutative polynomials positive on matrix convex sets}.
\newblock Journal of Mathematical Analysis and Applications  \textbf{445}.1
  (2017), 32--80.

\end{thebibliography}

\end{onehalfspace}

\end{document}